\newtheorem{theorem}{Theorem}[section]
\newtheorem{proposition}[theorem]{Proposition}
\newtheorem{lemma}[theorem]{Lemma}
\newtheorem{example}[theorem]{Example}
\newtheorem{corollary}[theorem]{Corollary}
\newtheorem{definition}[theorem]{Definition}
\newtheorem{observation}[theorem]{Observation}
\newtheorem*{rep@theorem}{\rep@title}
\newcommand{\newreptheorem}[2]{%
\newenvironment{rep#1}[1]{%
 \def\rep@title{#2 \ref{##1}}%
 \begin{rep@theorem}}%
 {\end{rep@theorem}}}
\def\od{\stackrel{\mathrm{def}}{=}}
\newcommand{\ZZ}{\mathbb{Z}}
\newcommand{\R}{\mathbb{R}}
\newcommand{\X}{\mathcal{X}}
\newcommand{\1}{\mathds{1}}
\def\Spec{{\text{\upshape Spec}}}
\DeclareMathOperator{\im}{im}
\newcommand{\Lup}{L^{\mathrm{up}}} 
\newcommand{\Ldown}{L^{\mathrm{down}}} 
\newcommand{\Span}{\mathrm{Span}}
\def\up{{\text{\upshape up}}}
\def\down{{\text{\upshape down}}}
\def\d{{\delta}} 
\newcommand{\glue}[3]{{#1}^{+_{#2}:{#3}}}
\newcommand{\remove}[2]{{#1}^{\times_{#2}}}
\newcommand{\swap}[3]{{#1}^{\mathrlap{\times}{+}_{#2}:{#3}}}
\colorlet{myPurple}{Orchid!80!black}
\colorlet{myBlue}{Turquoise!60!black}
\colorlet{lyellow}{yellow!20!white}
\colorlet{myGreen}{green!50!black}
\colorlet{lgray}{gray!15!white}
\colorlet{lblue}{Turquoise!20!white}
\colorlet{lgreen}{ForestGreen!15!white}
\colorlet{lgrey}{gray!15!white}
\colorlet{dblue}{Turquoise!60!black}
\colorlet{dgreen}{ForestGreen!60!black}
      \tikzstyle{vecArrow} = [thick, decoration={markings,mark=at position
     \tikzstyle{innerWhite} = [semithick, white,line width=1.4pt, shorten >= 4.5pt]
\title{On the spectrum of the Hodge Laplacian on sequences
}
\author{
 Hannah Santa Cruz Baur \\
  Pennsylvania State University \\
  \texttt{hqs5441@psu.edu} \\
   \And
  Vladimir Itskov \\
  Pennsylvania State University \\
  \texttt{vladimir.itskov@psu.edu} \\
}
\begin{document}
\maketitle

\begin{abstract}
Hodge Laplacians have been previously proposed as a natural tool for understanding higher-order interactions in networks and directed graphs. Here we introduce a Hodge-theoretic approach to spectral theory and dimensionality reduction for probability distributions on sequences and simplicial complexes. We demonstrate that this Hodge theory has desirable properties with respect to  natural null-models, where the underlying vertices are independent. 

We prove that for the case of independent vertices in simplicial complexes, the appropriate Laplacians are multiples of the identity and thus have no meaningful Fourier modes. For the null model of   independent vertices in sequences, we prove that the  appropriate Hodge Laplacian has an integer spectrum, and describe its eigenspaces. We also prove that  the underlying cell complex of sequences has trivial reduced homology.   Our results establish a foundation for developing  Fourier analyses of probabilistic models, which are common in theoretical neuroscience and machine-learning.

\end{abstract}

\keywords{Hodge Laplacian, dimensionality reduction, simplicial complexes, sequences, neural codes.}

\section{Introduction}
Graph Laplacians and diffusion maps \cite{Kirchhoff1847} are a common tool in network analysis, dimensionality reduction and deep learning networks \cite{Belkin2003, coifman2005geometric, Bronstein2017}.  In these settings a pairwise similarity function on a finite set of vertices is converted into a weighted graph;  then a Laplacian is defined over the space of vertices so that the corresponding heat equation describes the diffusion process on the weighted graph \cite{Ricaud2019}.  The eigenfunctions of the  Laplacian  are understood as an analog of  the standard Fourier basis in  a Euclidean domain,  and they are often used to embed the vertices of the graph into a low-dimensional space. The resulting geometric representation accurately reflects the similarity function, interpreted as a proxy for a distance \cite{Belkin2003,coifman2005geometric}.  

Pairwise correlations are  often used for characterizing  similarity on a vertex set. However, in many application domains, such as theoretical neuroscience and machine learning,  higher-order correlations play an important role, and   essential  features of the underlying data cannot be detected using {\it only}  second-order correlations.  Hodge Laplacians for simplicial complexes, dating back to \cite{Eckmann1944},  have been proposed as a proper  replacement for the graph Laplacians. 
Basic  properties of the Hodge Laplacians with respect to the structure of the simplicial complexes were investigated in the pioneering work \cite{Horak2013,Horak2011InterlacingIF}, 
and later generalized to the context of  cellular sheaves in \cite{Hansen_2019,ghrist2020cellular}.
Similar to the graph Laplacians, the diffusion equation for the Hodge 
Laplacians was reinterpreted in terms of random walks on simplicial complexes \cite{mukherjee2016random,Schaub2020,kaufman2020high}.  Recently, there has been a resurgence of interest in Hodge-theoretic tools for generalizing various deep  learning networks \cite{Ebli2020,Barbarossa_2020,Bodnar2021,roddenberry2021principled,giusti2022simplicial,keros2022dist2cycle,roddenberry2022signal,bodnar2022neural}\footnote{This list of papers is very incomplete.}. 

While Hodge Laplacians are well-suited for a spectral theory on binary vectors (or subsets), many  scientific fields, such as genomics, theoretical neuroscience and language models study data that come in the form of a collection of sequences. This kind of data can be thought of as a  generalization of directed graphs, similar to how simplicial complexes and hypergraphs generalize undirected graphs.  
The sequence data can be formalized in terms of a cell complex $\X$, that comprises all possible sequences, and a probability distribution 
 $p : \X \to [ 0,1 ] $ that  describes the statistical properties of the data. Here, the role of similarity, or correlations is played by the probability;  an equivalent formulation can be made in terms of the higher-order correlations.  
 
Similar to the case of binary vectors, the interpretation and machine-learning pipelines for sequential data  require   tools such as dimensionality reduction and the notion of convolution. This necessitates a   Laplacian formalism for sequences. Importantly, such a  Laplacian needs to have  particularly simple properties  in the case of {\it null models}, where the statistical properties of sequences can be explained by a simple probabilistic model with no other  underlying structure.

To this end we developed a Hodge Laplacian formalism associated with probability distributions on sequence complexes. We accompany this construction with an analogous one for the case of simplicial complexes. We  consider natural null models of {\it independent vertices} and prove that for these models the Hodge Laplacians have particularly simple properties.

The paper is organized as follows. In section \ref{sec:hodge} we provide  relevant background on abstract cell complexes, then  we  introduce weight functions associated to probability distributions and define  the null models  of independent vertices (subsections \ref{subsec:dist}, \ref{subsec:ind}); we end the section with an exposition of classical definitions and results in  Hodge theory (section \ref{subsec:hodge}) in the context of abstract cell complexes. In section \ref{results} we present the main results, that describe the Laplacian spectrum for independent vertices models, which we prove in sections \ref{sec:seq cpx} and \ref{sec:Simpl cpx} for sequence and simplicial complexes respectively. 
 
\medskip
\section{Hodge theory on abstract cell complexes}\label{sec:hodge}

\subsection{Abstract Cell Complexes} \label{subsec:cpx}
 Sequence complexes are a special type of abstract cell complexes; thus start with  their formal definition.\\

\begin{definition}\label{def:ACC}
An  \emph{Abstract Cell  Complex} is a quadruple $\X=(\X, \leq, \dim,\kappa)$ that consists of a  poset $(\X, \leq)$ along with two functions, the dimension  function $\operatorname{dim}: \X \rightarrow \mathbb{Z}$,   and the incidence  function $\kappa: \X \times \X \rightarrow \ZZ$ that  satisfy the following conditions:
\begingroup
\addtolength{\jot}{1em}
\begin{align*}
&1.  \hspace{1mm} \dim \text{ is a poset morphism;}\\
&2. \hspace{1mm} \text{For all } \xi, \xi^{\prime} \in \X,  \hspace{10mm}
\kappa\left(\xi, \xi^{\prime}\right) \neq 0 \quad  \implies \quad     \xi^\prime\leq  \xi, \text{ and } 
\operatorname{dim}(\xi)=\operatorname{dim}\left(\xi^{\prime}\right)+1 ;\\
&3. \hspace{1mm} \text{For all } \xi, \xi^{\prime \prime} \in \X,  \hspace{10mm}
\sum_{\xi^{\prime} \in \X} \kappa\left(\xi, \xi^{\prime}\right)\kappa\left(\xi^{\prime}, \xi^{\prime \prime}\right)=0
\end{align*}
\endgroup
 \end{definition}
 \medskip 
 Note that we will only consider abstract cell complexes  where $\X_n=\dim^{-1}(n)$ is a finite set for all $n \in \ZZ$.
 The most popular example of abstract cell complexes are simplicial complexes.\\

\begin{definition}\label{ex:simplicial}  Let $V$ be a finite set of vertices,
  an \emph{Abstract Simplicial Complex},  $K\subseteq 2^{V}$, is a collection of subsets of  $V$, that  is closed under inclusion, meaning that 
$$\forall \xi \in K,  \quad \xi' \subseteq \xi \implies \xi' \in K.$$
  \end{definition}

 Here the poset structure is chosen to be the set inclusion $\subseteq$, and   the dimension function is    $\dim(\xi)= \vert \xi\vert-1,$ 
where $\vert \xi\vert $ is the cardinality  of the set $\xi$. 
The incidence function is defined using a particular choice of a total order $v_1<v_2< \dots <v_m$ on the vertex set $V$. Given a set  $\xi=\{u_0,u_1,\dots,u_n\}$, where $u_j<u_{j+1}$ for all $j=0,\dots n-1$,  and a subset  $\xi' \subseteq  \xi$ such that $\dim(\xi')=\dim(\xi)-1$,
\begin{equation*}  \kappa\left(\xi, \xi^{\prime}\right) \od (-1)^j, \quad \text{where } \xi^\prime = \xi\setminus \{u_j\}. 
\end{equation*} 
If $\dim(\xi')\neq \dim(\xi)-1$, or $\xi'\not\subseteq \xi$, then $ \kappa\left(\xi, \xi^{\prime}\right)=0$.

\medskip
Another example of interest is a collection of sequences. Let $V$ be a finite set of vertices, denote by $\X_{n}[V]$ the set of all  length $(n+1)$ sequences with vertices in $V$. Here the same vertex can appear in the sequence more than once, and  sequences with different orderings are distinct. We say that a sequence   $\tau\in \X_{n-k}[V]$ is a subsequence  of a sequence  $\sigma\in \X_{n}[V]$, with the notation  $\tau\leq \sigma$,  if    $\tau$ can be obtained by removing $k$ vertices (in any position) from the sequence $\sigma$. For example, $(a,b,c)\leq (c,a,b,b,c,a)$. We also denote by $\X_{-1}=\{\varnothing\}$ the set consisting of the  empty sequence in $V$. \\

\begin{definition}\label{ex:seqcpx}  Let $V$ be  a finite set of vertices. A  \emph{Sequence Complex} is a collection of sequences 
$$\X = \bigcup_{n\geq -1} \X_n, \quad \text{ where } \X_n\subseteq \X_{n}[V],$$  
which is closed under inclusion, i.e. 
$\forall \sigma  \in \X,  \,\,  \tau \leq \sigma \implies \tau  \in \X$.
\end{definition} 
This is a special case of an abstract cell complex, where the  poset structure is the subsequence relation $\leq$, and the dimension is defined as   $\dim(\sigma)=n=\vert \sigma \vert-1$ for $\sigma \in \X_{n}[V]$. The incidence function $\kappa$  is defined for all $ \sigma, \tau \in \X,$ with $\tau \leq \sigma$ and $\dim(\tau)=\dim(\sigma)-1$, as 
\begin{equation*}
 \kappa(\sigma,\tau)\od \sum_{ i: \remove{\sigma}{i}=\tau }(-1)^i, 
\end{equation*}
where $\remove{\sigma}{i}$ denotes the sequence, obtained by removing the $i$'th vertex from sequence $\sigma$. 
If $\dim(\tau)\neq \dim(\sigma)-1$, or $ \tau \not \leq\sigma$  then $ \kappa\left(\sigma, \tau \right)=0$.\\

In this paper, we will only consider  $\X_n=\X_n[V]$ the full collection of sequences over the vertex set $V$. We refer to the corresponding sequence complex as the full sequence complex,  even though all the properties and results,  except for Theorem \ref{bigThm}, and its corollary  hold in the more general case.

\subsection{Distributions on Simplicial and Sequence  Complexes}\label{subsec:dist}

A probability distribution on a sequence complex  $\X$, is a function $p:  \X \to [0,1]$ that satisfies the condition 
$$ \sum_{\sigma \in \X}   p(\xi) =1.$$

We introduce  weights $w_p : \X \to [ 0,1 ] $, that  are 
 induced by the distribution $p$, 
 as the conditional probability given the sequence  length: 
\begin{equation} \label{eq:wpsigma} w_p(\sigma)=p(\sigma \bigm| | \sigma| =n+1)= {p(\sigma )}\left( \sum_{\vert \tau\vert = \vert \sigma\vert }p(\tau)\right)^{-1}, \qquad \sigma\in \X_n. \end{equation}

An immediate consequence of this weight function construction is that all sequences of same length have weights that sum to one.
\begin{equation}\label{weightsum}
\sum_{\sigma \in \X_n}w_p(\sigma)=1,  \hspace {2mm} \text{for all } n \in \mathbb{Z}.
\end{equation}

\medskip
In parallel to the distribution induced weight function on sequences, we introduced two weight functions on simplicial complexes. A simplicial distribution on an abstract simplicial complex $K \subseteq 2^{[m]}$, is a probability distribution $p: K \to [0,1]$ that satisfies the condition
\begin{align*}
 \sum_{\xi \in K} \hspace{1mm}&p(\xi) =1
\end{align*}

We consider either the simplicial distribution directly as a weight function on the simplicial complex, or the associated moment map as the weight function. 
The moment map $m_p: K \to [0,1]$, associated to a simplicial distribution, $p$ is defined for all $\xi \in K$ as:
\begin{align*}
m_p(\xi) := \mathbb{E}_p \Bigg( \prod_{i \in \xi} x_i \Bigg)
= \sum_{\zeta \supseteq \xi} p(\zeta)
\end{align*}

Here we use the notation $x_i : 2^{[m]} \to \{0,1\}, x_i(\xi)=1 \text{ iff } i \in \xi$, to refer to the indicator function, viewed as a random variable on $2^{[m]}$. The equivalence of the two definitions for the moment map is easy to see,
$$\mathbb{E}_p \Bigg( \prod_{i \in \xi} x_i \Bigg) 
= \sum_{\zeta \supseteq \xi} p(\zeta) \prod_{i \in \xi} x_i (\zeta) + \sum_{\zeta \not\supset \xi} p(\zeta)\prod_{i \in \xi} x_i (\zeta)
= \sum_{\zeta \supseteq \xi} p(\zeta)\cdot 1 +  \sum_{\zeta \not\supseteq \xi} p(\zeta) \cdot 0
= \sum_{\zeta \supseteq \xi} p(\zeta)
$$

\medskip
Notice the moment map, as opposed to the simplicial distribution, takes value one on the empty set, $m_p(\varnothing)=1$, and 
is monotone decreasing on the faces of the simplicial complex, 
$$\zeta \subseteq \xi \implies m_p(\zeta) > m_p(\xi).$$

\subsection{The independence models for simplices and sequences}\label{subsec:ind}
 We  define null models on sequence and simplicial complexes as weight functions arising  from an assumption that the weights assigned to the sequences (or simplices) depend solely on the vertices involved, and not the precise position of each vertex.
We will refer to complexes with such weight functions as following the Independent Vertices Model. Notice we would have one such model for the sequence complex context, and another for the simplicial complex context. We make this formal in the following two definitions.\\

\begin{definition}\label{def:ind seq}
Given a vertex set $V$,  and vector  $(w_v)_{v \in V} \in (0,1)^{|V|}$ which satisfies:
  \begingroup
\addtolength{\jot}{0.5em}
\begin{alignat}{2}\label{ind model}
\sum_{v\in V} w_v&=1
\end{alignat}
\endgroup
We define the associated  \emph{independent vertices model} over a sequences complex $\X$, with $\X_0=V$, as the weighted sequence complex $(\X,w)$ with weight function $w$,  such that:
  \begingroup
\addtolength{\jot}{0.5em}
\begin{alignat}{2}\label{ind model}
w((v_0,...,v_n))&= \prod_{i=0}^n w_{v_i}, \hspace{5mm} && \text{for all } (v_0,...,v_n) \in \X \nonumber\\
w(\varnothing) &= 1 \nonumber
\end{alignat}
\endgroup
\end{definition}

It is quick to check that this weight function satisfies the condition $$\sum_{\sigma \in \X_n} w(\sigma)=1 \quad \text{ for all } n \geq -1.$$   One can interpret this weight function as a probability  distribution on sequences of a fixed length $=n+1$, where the appearance of a vertex in each sequence position is independent from the vertices in all the other positions, and $w_v$ is the  probability of the  vertex $v$ appearing in each position. \\

In the case of simplicial complexes, we define the independent simplicial distribution associated to a vector $(p_1,...,p_m) \in (0,1)^{m}$ as follows. For each face $\xi \in 2^{[m]}$, interpreted as a binary vector on $m$ vertices, we define the probability $p(\xi)$, as the joint probability of each of the vertices $i \in [m]$ taking the prescribed binary value in $\xi$ following a Bernoulli distribution given by $p_i$. In other words, the probability is defined as:

\begingroup
\addtolength{\jot}{0.5em}
\begin{alignat*}{2}
p(\xi)&= \prod_{i \in \xi} p_i \prod_{j \not\in \xi} (1-p_j),  \hspace{5mm} &&\text{ for all } \xi \in K. 
\end{alignat*}
\endgroup

Here $p_i$ has the meaning of probability of the $i$-th vertex appearing in a subset $\sigma$. Note that these probabilities do {\it not}   sum to $1$. 
We define the independent vertices model on simplicial complexes, inspired in the moment map weight function associated to the independent simplicial distribution.\\

\begin{observation}\label{obs:moment}
The moment map $m_p$,  on a simplicial complex $K \subseteq 2^{[m]}$ induced by  the independent simplicial distribution associated to $(p_1,...,p_m) \in (0,1)^{m}$,  is defined as:
  \begingroup
\addtolength{\jot}{0.2em}
\begin{alignat*}{2}
m_p(\xi)&= \prod_{i \in \xi} p_i,  \hspace{5mm} &&\text{ for all } \xi \in K \\
w(\varnothing) &= 1 
\end{alignat*}
\endgroup
\end{observation}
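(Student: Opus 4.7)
The plan is to verify the formula by a direct computation from either of the two equivalent definitions of the moment map given in Section \ref{subsec:dist}. The cleanest route is through the expectation formulation: by construction, the independent simplicial distribution is precisely the joint distribution of independent Bernoulli random variables $x_1,\dots,x_m$ on $2^{[m]}$ with $\mathbb{E}_p(x_i)=p_i$. Since independence of the $x_i$ implies independence of any subcollection $\{x_i\}_{i\in\xi}$, the expectation of the product factors as the product of expectations:
\[
m_p(\xi) \;=\; \mathbb{E}_p\!\left(\prod_{i\in\xi} x_i\right) \;=\; \prod_{i\in\xi} \mathbb{E}_p(x_i) \;=\; \prod_{i\in\xi} p_i.
\]

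For readers who prefer a combinatorial verification, I would also give the parallel argument from the sum formulation $m_p(\xi)=\sum_{\zeta\supseteq\xi} p(\zeta)$. Substituting $p(\zeta)=\prod_{i\in\zeta} p_i \prod_{j\notin\zeta}(1-p_j)$ and factoring out $\prod_{i\in\xi} p_i$ (which appears in every summand since $\xi\subseteq\zeta$), one parameterizes the remaining sum by $B=\zeta\setminus\xi$ ranging over subsets of $[m]\setminus\xi$. The resulting sum factors as
\[
\sum_{B\subseteq [m]\setminus\xi}\prod_{i\in B} p_i \prod_{j\in ([m]\setminus\xi)\setminus B}(1-p_j) \;=\; \prod_{k\in [m]\setminus\xi}\bigl(p_k+(1-p_k)\bigr) \;=\; 1,
\]
yielding the same formula. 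The boundary case $m_p(\varnothing)=1$ then drops out as the empty product, consistent with the general property of the moment map already noted in the preceding discussion.

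There is no real obstacle here: the observation is essentially just the standard statement that expectation is multiplicative on independent random variables, translated into the simplicial notation. The only subtle point worth flagging is the convention for the range of $\zeta$ in the sum $\sum_{\zeta\supseteq\xi} p(\zeta)$ when $K\subsetneq 2^{[m]}$, but either viewpoint (sum over $2^{[m]}$, or extend $p$ by zero outside $K$) is consistent with the product-of-Bernoullis interpretation that defines the independent model.
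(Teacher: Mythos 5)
Your proposal is correct and matches the intended justification: the paper states Observation \ref{obs:moment} without proof, and the routine computation it implicitly relies on is exactly your factorization $\sum_{B\subseteq[m]\setminus\xi}\prod_{i\in B}p_i\prod_{j\in([m]\setminus\xi)\setminus B}(1-p_j)=\prod_{k\in[m]\setminus\xi}\bigl(p_k+(1-p_k)\bigr)=1$ (equivalently, multiplicativity of expectation over independent Bernoulli variables). Both of your routes are valid, and your remark on the convention for $\zeta$ when $K\subsetneq 2^{[m]}$ is a reasonable clarification consistent with the paper's setup.
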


We give the definition for the independent vertices model in the context of simplicial complexes, for a general vector $(w_1,...,w_m) \in (0,1)^{m}$ determining the weight of each vertex. The definition is thus very reminiscent of the one in the sequence complex context.\\

\begin{definition}\label{def:ind model}
Given a vertex set $[m]$,  and vector  $(w_1,...,w_m) \in (0,1)^{m}$ (we allow in this context the possibility of $\sum_{i \in [m]}w_i \neq 1$), we define the associated  \emph{independent vertices model} over a simplicial complex $K \subseteq 2^{[m]}$, as the weighted simplicial complex $(K,w)$ with weight function $w$,  such that:
  \begingroup
\addtolength{\jot}{0.5em}
\begin{alignat}{2}
w(\xi)&= \prod_{i\in \xi}^n w_{i}, \hspace{5mm} && \text{for all } \xi \in K \nonumber\\
w(\varnothing) &= 1 \nonumber
\end{alignat}
\endgroup
\end{definition}

 In applications, we may interpret this weight function as either the moment map associated to an independent simplicial distribution (as in observation \ref{obs:moment}), or alternatively as the weight function that normalizes the distribution by the probability of the empty set, as in the following observation \ref{obs:norm dist}.   We will show, that in this null hypothesis setting, both weight functions yield an equivalent result, which is what we would like to see.\\
 
\begin{observation}\label{obs:norm dist}
Given a simplicial complex $K$, and the independent simplicial distribution, $p$, associated to the vector $(p_1,...,p_m) \in (0,1)^{m}$, consider the weight function $p_\varnothing$ defined over each face as it's probability normalized by the probability of the empty set.
\begin{align*}
p_\varnothing(\xi)=\dfrac{p(\xi)}{p(\varnothing)}, \hspace{5mm} \text{ for all } \xi \in K \\
\end{align*}
In the case of the independent simplicial distribution, this yields 
\begin{align*}
p_\varnothing(\xi)= \displaystyle\prod_{i \in \xi} p_i \cdot \displaystyle\prod_{i \not\in \xi} (1-p_i) \cdot \Bigg(\displaystyle\prod_{i \in [m]} (1-p_i)\Bigg)^{-1} = \displaystyle\prod_{i \in \xi} p_i (1- p_i)^{-1}
\end{align*}
Consequently the weighted simplicial complex associated to this weight function, $(K,p_\varnothing)$,  follows an independent vertices model associated to the vector 
$\Big( p_1 (1- p_1)^{-1},..., p_m (1- p_m)^{-1} \Big) \in (0,1)^{m}$.
\end{observation}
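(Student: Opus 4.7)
The plan is a direct computation using the definition of the independent simplicial distribution. First I would compute $p(\varnothing)$ explicitly: substituting $\xi = \varnothing$ into $p(\xi) = \prod_{i \in \xi} p_i \prod_{j \notin \xi} (1-p_j)$ gives $p(\varnothing) = \prod_{i \in [m]}(1-p_i)$, which is nonzero because each $p_i \in (0,1)$, so the normalization is well-defined.

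Next I would form the ratio $p_\varnothing(\xi) = p(\xi)/p(\varnothing)$ for an arbitrary $\xi \in K$. Splitting the denominator's product over $[m]$ into the product over $\xi$ and the product over $[m] \setminus \xi$, the factors $\prod_{j \notin \xi}(1-p_j)$ in numerator and denominator cancel, leaving $\prod_{i \in \xi} p_i \cdot \prod_{i \in \xi}(1-p_i)^{-1} = \prod_{i \in \xi} p_i(1-p_i)^{-1}$, which is exactly the middle and right-hand expressions displayed in the observation.

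Finally I would compare with Definition \ref{def:ind model}: setting $w_i := p_i(1-p_i)^{-1}$ for $i \in [m]$ yields weights in $(0,\infty)$ (and if one wants each $w_i \in (0,1)$ it amounts to requiring $p_i < 1/2$, though the definition as stated does not demand a normalization $\sum_i w_i = 1$ in the simplicial case). Then $p_\varnothing(\xi) = \prod_{i \in \xi} w_i$ for all $\xi \in K$, and by convention $p_\varnothing(\varnothing) = 1$ because $p(\varnothing)/p(\varnothing) = 1$ and the empty product equals $1$. This matches the defining equation of the independent vertices model for simplicial complexes with the vector $\bigl(p_1(1-p_1)^{-1},\ldots,p_m(1-p_m)^{-1}\bigr)$.

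There is no real obstacle — the statement is an explicit algebraic identity combined with recognizing its form. The only subtlety worth flagging is the compatibility of codomains: the definition writes the weight vector as lying in $(0,1)^m$, but the stated conclusion produces entries $p_i(1-p_i)^{-1}$ that may exceed $1$; this is consistent with the paper's earlier remark that in the simplicial setting one does not require $\sum_i w_i = 1$, so the argument goes through as a verification that $p_\varnothing$ is multiplicative across vertex factors, which is the essential content of the independent vertices model.
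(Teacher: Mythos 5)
Your proposal is correct and follows essentially the same route as the paper, which simply computes $p(\varnothing)=\prod_{i\in[m]}(1-p_i)$, cancels the common factors $\prod_{j\notin\xi}(1-p_j)$ in the ratio $p(\xi)/p(\varnothing)$, and matches the resulting product $\prod_{i\in\xi}p_i(1-p_i)^{-1}$ against Definition \ref{def:ind model}. Your side remark about the codomain is a fair catch: the entries $p_i(1-p_i)^{-1}$ need not lie in $(0,1)$, so the paper's ``$\in(0,1)^m$'' is a slight misstatement, but this is harmless since the simplicial independent vertices model imposes no normalization on the weight vector.
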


\subsection{Homological algebra  of Abstract Cell Complexes} \label{subsec:hom}

Given an abstract cell complex $\X$,  we denote the set  of $n$-dimensional elements  as  $\X_n \od  \dim^{-1}(n)$, and identify the  space of $n$-dimensional cochains  $C^n(\X)$ with  $\R$-valued functions on $\X_n$:  
\begin{equation*}
C^n(\mathcal{X})= \left \{ f: \X_n\to \R \right\}. 
\end{equation*}
 Note that if $\X_n$ is empty  for some $n\in \mathbb Z$,  then by convention,  $C^n(\mathcal{X})=\{ 0\}$.     We will refer to the graded space of cochains as, 
\begin{equation} \label{eq:CstarX} C^*(\mathcal{X})= \bigoplus_{n=-1}^{+\infty}C^n(\mathcal{X})\end{equation} 
Note  that we  restrict  ourselves to working over the field of real numbers;  this is because we will make use of inner products on  $C^*(\mathcal{X})$.\\

 \medskip 
The coboundary operator $\delta_{n}: C^{n}(\X) \rightarrow C^{n+1}(\X)$, is defined  on    $f \in C^n(\X)$ via 
\begin{equation*}
(\delta_{n} f) (\xi)\od
\sum_{\xi^{\prime} \in \X_{n}}  \kappa \left(\xi, \xi^{\prime} \right)   f  \left(\xi^{\prime}\right).
\end{equation*}
 
 The coboundary operator   satisfies $\delta_{n+1} \delta_n=0$ for all $n$; this is because of the condition (3) in Definition \ref{def:ACC}. This property enables  the usual definition of cohomology 
 $H^n(\X) = \frac{ \ker \delta_n}{ \im \delta_{n-1}}$ of the chain complex 
 \begin{align*}
\{0\}{\longrightarrow} \hspace{1mm} C^{-1}(\X) \stackrel{\delta_{-1}}{\longrightarrow}  C^{0}(\X) \stackrel{\delta_{0}}{\longrightarrow} \ldots \stackrel{\delta_{n-1}}{\longrightarrow} C^{n}(\X)\stackrel{\delta_{n}}{\longrightarrow}  C^{n+1}(\X){\longrightarrow}  \ldots.
\end{align*}

  \subsection{Hodge Theory on Abstract Cell Complexes}\label{subsec:hodge} 
Here we  expose the Hodge theory on  abstract cell complexes. The definitions and results we present here are classical results for simplicial complexes, dating back to \cite{Eckmann1944} and expanded in e.g.  \cite{Horak2013, Hansen_2019}. We present them in the context of  abstract cell complexes, for sake of self-containment  and notation consistency. The results and proofs of this section are not new. 

Let   $\langle \cdot  , \cdot \rangle$ be an inner product on 
$C^*(\X)$, such that the direct sum decomposition \eqref {eq:CstarX}  is an orthogonal decomposition with respect to the inner product $\langle \cdot  , \cdot \rangle$. Denote by $\delta_n^*$ the adjoint of $\delta_n$, with respect to the chosen inner product  $\langle \cdot  , \cdot \rangle$.  That is, for every $f,g\in C^*(\X)$, 
\begin{equation*}\langle \delta_n^* f, g\rangle = \langle f,  \delta_n g\rangle.
\end{equation*}  

It is easy to see that $\delta_{n+1}\delta_n=0$ if and only if $\delta^*_{n}\delta^*_{n+1}=0$. Therefore there are two versions of the cohomology space,  one for $\delta_n$ and the other for $\delta_n^*$, and these two are isomorphic.  We introduce the Laplacian operator, which  is closely related to the cohomology,  $H^n(\X)$, of the coboundary operator $\d_n$.\\
 
  \begin{definition}\label{def::Laplacian} The Laplacian of a triple $(C^*(\X),\delta,\langle \cdot , \cdot\rangle)$ is an operator over $ C^n(\X) \to  C^n(\X)$,  for all $n\in \mathbb{Z}_{\geq -1}$. \\
    \begingroup
\addtolength{\jot}{0.8em}
 \begin{align*}   \text{defined as,  }L_n &=\Lup_n+\Ldown_n\\
                        \text{where,  } \Lup_n  &=\delta_n ^* \delta_n,\hspace{2mm}
                           \Ldown_n  =\delta_{n-1} \delta_{n-1} ^*.                     
 \end{align*} 
 \endgroup
  \end{definition} 

\medskip
  For the reader familiar with the combinatorial Laplacian, $L_{\text{comb}}$, we note this operator corresponds to the $0$-dimensional up-Laplacian associated to a weighted graph G, 
  $$L_{\text{comb}}=L_0^\up:C^0(G)\to C^0(G).$$ 
  We refer to Section \ref{sec:Simpl cpx} for a deeper discussion of this.

 \medskip 
As a direct result of  Definition \ref{def::Laplacian} and standard Linear Algebra results, one can easily prove the following properties.\\
  
    \begin{lemma}\label{lemma:Laplacians}
The Laplacians $\ \Lup$, $\ \Ldown$ and $\ L$ are self-adjoint and   positive semi-definite with respect to  the inner product $\langle\cdot,\cdot\rangle$. Moreover, the following identities hold for all $n\in \mathbb{Z}_{\geq -1}$: 
\begingroup
\addtolength{\jot}{0.8em}
 \begin{align*}
  \ker \ L_n = &\left(\ker \ \Lup_n\right) \cap \left( \ker \ \Ldown_n\right) =   (\ker\delta_{n-1}^*)\cap (\ker \delta_n).\\
   \im(\delta_n^*) =& \im(\delta_n^*\delta_n) \text{  and } \im(\delta_{n+1}) = \im(\delta_{n+1}\delta_{n+1}^*).
\end{align*} 
\endgroup
\end{lemma}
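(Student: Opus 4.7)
The plan is to verify each claim by direct computation using only the definitions of $\Lup_n, \Ldown_n, L_n$ and standard adjoint identities $(AB)^*=B^*A^*$ and $A^{**}=A$ in finite-dimensional inner product spaces.

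First I would establish self-adjointness: since $\Lup_n=\delta_n^*\delta_n$, we have $(\Lup_n)^*=\delta_n^*(\delta_n^*)^*=\delta_n^*\delta_n=\Lup_n$, and the same for $\Ldown_n=\delta_{n-1}\delta_{n-1}^*$, so $L_n=\Lup_n+\Ldown_n$ is self-adjoint as a sum of self-adjoint operators. For positive semi-definiteness, the key identity is
\begin{equation*}
\langle L_n f, f\rangle = \langle \delta_n^*\delta_n f,f\rangle + \langle \delta_{n-1}\delta_{n-1}^*f, f\rangle = \|\delta_n f\|^2 + \|\delta_{n-1}^* f\|^2 \geq 0,
\end{equation*}
which gives PSD for $L_n$ and, by taking one term at a time, also for $\Lup_n$ and $\Ldown_n$.

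For the kernel identities, the inclusion $(\ker \Lup_n)\cap(\ker \Ldown_n)\subseteq \ker L_n$ is trivial. For the reverse inclusion, if $L_n f=0$ then $\langle L_n f,f\rangle=\|\delta_n f\|^2+\|\delta_{n-1}^*f\|^2=0$ forces $\delta_n f=0$ and $\delta_{n-1}^*f=0$, so $f\in(\ker\delta_{n-1}^*)\cap(\ker\delta_n)$. The same argument applied individually yields $\ker \Lup_n=\ker\delta_n$ and $\ker \Ldown_n=\ker\delta_{n-1}^*$, which finishes the chain of equalities.

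For the image identities, I would use the standard orthogonal-complement relation $\im(T)=(\ker T^*)^\perp$ valid in finite dimensions. Since $\delta_n^*\delta_n$ is self-adjoint, $\im(\delta_n^*\delta_n)=(\ker(\delta_n^*\delta_n))^\perp=(\ker \delta_n)^\perp=\im(\delta_n^*)$, where the middle equality uses $\ker \Lup_n=\ker\delta_n$ already proved above. The identity $\im(\delta_{n+1}\delta_{n+1}^*)=\im(\delta_{n+1})$ follows by the symmetric argument applied to $\Ldown_{n+2}=\delta_{n+1}\delta_{n+1}^*$. There is no real obstacle here; the only subtlety is to resist the temptation to invert operators and instead consistently rely on orthogonal-complement arguments, which work because all cochain spaces $C^n(\X)$ are finite-dimensional by the standing assumption that each $\X_n$ is finite.
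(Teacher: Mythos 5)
Your proof is correct: the self-adjointness and positivity computations, the norm argument $\langle L_n f,f\rangle=\|\delta_n f\|^2+\|\delta_{n-1}^*f\|^2$ for the kernel identities, and the orthogonal-complement argument $\im(T)=(\ker T^*)^\perp$ for the image identities are exactly the ``standard Linear Algebra results'' the paper invokes without writing them out. Nothing is missing, and the finite-dimensionality of each $C^n(\X)$ is correctly the only hypothesis you need.
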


\medskip 
 As we mentioned earlier, the Laplacian is intimately related to the cohomology of the coboundary operator $\d_n$. This theorem dates back to 1944 \cite{Eckmann1944}, in the context of simplicial complexes, and is easily  extended to the case of  abstract cell complexes with no changes to the proof.\\
 
  \begin{theorem}\label{thm:Erckmann} The kernel of the Laplacian $L_n$ associated to an abstract cell complex $\X$,  referred to as the space of ``harmonic vectors'',  is isomorphic to the cohomology of $\delta_n$, for all $n\in \mathbb{Z}_{\geq -1}$:  
  $$ \ker L_n\simeq  H^n(\X).
  $$ 
  \end{theorem}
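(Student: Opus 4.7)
The plan is to establish the classical Hodge decomposition for $C^n(\X)$ with respect to the chosen inner product, and then extract the isomorphism $\ker L_n \simeq H^n(\X)$ from it. Since we work over $\R$ with finite-dimensional spaces $C^n(\X)$ (as $\X_n$ is finite), all the standard finite-dimensional linear algebra facts about adjoints are available.

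First I would invoke the identity $(\ker A)^\perp = \im A^*$, valid for any linear map $A$ between finite-dimensional inner product spaces, applied to $A = \delta_n$ and to $A = \delta_{n-1}^*$. This gives the two orthogonal decompositions
\begin{equation*}
C^n(\X) = \ker \delta_n \oplus \im \delta_n^* \qquad \text{and} \qquad C^n(\X) = \ker \delta_{n-1}^* \oplus \im \delta_{n-1}.
\end{equation*}
Next, since $\delta_n \delta_{n-1} = 0$, we have $\im \delta_{n-1} \subseteq \ker \delta_n$, and dually $\im \delta_n^* \subseteq \ker \delta_{n-1}^*$. Intersecting the two decompositions I would conclude the three-term orthogonal Hodge decomposition
\begin{equation*}
C^n(\X) = \im \delta_{n-1} \oplus \left(\ker \delta_n \cap \ker \delta_{n-1}^*\right) \oplus \im \delta_n^*.
\end{equation*}

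Then I would apply Lemma \ref{lemma:Laplacians}, which identifies the middle summand with $\ker L_n$. Restricting to $\ker \delta_n$ yields
\begin{equation*}
\ker \delta_n = \im \delta_{n-1} \oplus \ker L_n,
\end{equation*}
because $\im \delta_n^*$ sits in $(\ker \delta_n)^\perp$ and so is removed. Finally, modding out by $\im \delta_{n-1}$ gives the canonical isomorphism
\begin{equation*}
H^n(\X) = \ker \delta_n / \im \delta_{n-1} \;\simeq\; \ker L_n,
\end{equation*}
where the isomorphism sends a cohomology class to its unique harmonic representative.

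There is no real obstacle here: the argument is standard Hodge theory and transfers verbatim from the simplicial setting once one has the coboundary maps $\delta_n$, the adjoints $\delta_n^*$, and the relation $\delta_{n+1}\delta_n = 0$, all of which are already in place from Definition \ref{def:ACC} and the preceding subsection. The only point requiring a moment of care is verifying that the three summands in the Hodge decomposition are genuinely pairwise orthogonal and exhaust $C^n(\X)$, but this follows immediately from the two two-term decompositions above together with the inclusions $\im \delta_{n-1} \subseteq \ker \delta_n$ and $\im \delta_n^* \subseteq \ker \delta_{n-1}^*$.
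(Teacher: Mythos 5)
Your proposal is correct and takes essentially the same route as the paper: the paper deduces Theorem \ref{thm:Erckmann} from the orthogonal Hodge decomposition $C^n(\X)=\ker L_n \oplus B^n_- \oplus B^n_+$ (Theorem \ref{thm:Hodge}), which it does not prove but cites to \cite{Eckmann1944,Horak2013}. Your argument simply supplies the standard finite-dimensional proof of that decomposition, via $(\ker A)^\perp=\im A^*$ applied to $\delta_n$ and $\delta_{n-1}^*$ together with Lemma \ref{lemma:Laplacians}, and then passes to the quotient $\ker\delta_n/\im\delta_{n-1}$, which is exactly the classical argument the paper is invoking.
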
  
  
 Theorem \ref{thm:Erckmann} is a specific case of a  more detailed theorem below, we refer to \cite{Horak2013} for a proof.  We use the following notations in what follows:
  $$ B^n_+\od \im \delta_{n}^*,
	\quad 
	B^n_-\od \im \delta_{n-1}.$$

\medskip
  \begin{theorem}\label{thm:Hodge} Given an abstract cell complex, $\X$,  and a fixed $n \in \ZZ_{\geq -1}$, the space $C^n(\X)$ admits  the following $\langle \cdot ,\cdot \rangle$-orthogonal decomposition,: 
  \begin{equation*}  C^n(\X)=\ker L_n \oplus B^n_-\oplus B^n_+
  \end{equation*} 
   \end{theorem}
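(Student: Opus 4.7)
The plan is to establish the decomposition in two stages: first verify that the three putative summands $\ker L_n$, $B^n_-$, and $B^n_+$ are mutually $\langle\cdot,\cdot\rangle$-orthogonal, and then show that together they span all of $C^n(\X)$ by identifying $\ker L_n$ with the orthogonal complement of $B^n_-\oplus B^n_+$. Since $\X_n$ is finite, $C^n(\X)$ is a finite-dimensional inner product space, so the argument is purely linear-algebraic and we can freely use the standard identities $(\im A)^{\perp}=\ker A^*$ and $(\ker A)^{\perp}=\im A^*$ for any linear map $A$ between finite-dimensional inner product spaces.

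For pairwise orthogonality, the cross term between $B^n_+$ and $B^n_-$ is the easiest: for any $f\in C^{n+1}(\X)$ and $g\in C^{n-1}(\X)$,
\begin{equation*}
\langle \delta_n^* f,\, \delta_{n-1} g\rangle = \langle f,\, \delta_n\delta_{n-1} g\rangle = 0,
\end{equation*}
by condition (3) of Definition \ref{def:ACC}, which gives $\delta_n\delta_{n-1}=0$. For the cross terms involving $\ker L_n$, I would invoke Lemma \ref{lemma:Laplacians}, which provides the crucial identification $\ker L_n = \ker\delta_n\cap \ker\delta_{n-1}^*$. Then for any harmonic $h\in\ker L_n$ and any $f\in C^{n+1}(\X)$,
\begin{equation*}
\langle h,\, \delta_n^* f\rangle = \langle \delta_n h,\, f\rangle = 0,
\end{equation*}
and symmetrically $\langle h,\, \delta_{n-1} g\rangle=\langle\delta_{n-1}^* h,\, g\rangle=0$ for any $g\in C^{n-1}(\X)$. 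This shows the three subspaces are mutually orthogonal and hence their sum is a direct orthogonal sum.

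To get that the sum exhausts $C^n(\X)$, I would compute the orthogonal complement of $B^n_-\oplus B^n_+$. Using the standard finite-dimensional identities,
\begin{equation*}
(\im\delta_n^*)^{\perp} = \ker\delta_n,\qquad (\im\delta_{n-1})^{\perp} = \ker\delta_{n-1}^*,
\end{equation*}
so that
\begin{equation*}
(B^n_-\oplus B^n_+)^{\perp} = (\im\delta_{n-1})^{\perp} \cap (\im\delta_n^*)^{\perp} = \ker\delta_{n-1}^* \cap \ker\delta_n = \ker L_n,
\end{equation*}
again by Lemma \ref{lemma:Laplacians}. Since in a finite-dimensional inner product space any subspace decomposes as $W\oplus W^{\perp}$, we obtain $C^n(\X)=(B^n_-\oplus B^n_+)\oplus\ker L_n$, which together with the pairwise orthogonality from the previous paragraph yields the claimed three-term orthogonal decomposition.

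There is no real obstacle here: the content of the theorem is essentially packaged into Lemma \ref{lemma:Laplacians}, and once that identification of $\ker L_n$ is available, the Hodge decomposition reduces to bookkeeping with adjoints and orthogonal complements in a finite-dimensional space. The only point that warrants care is the edge case where some $\X_k$ (with $k\in\{n-1,n,n+1\}$) is empty, in which case the corresponding $C^k(\X)$ is taken to be $\{0\}$ by convention, and the associated summand $B^n_\pm$ is simply zero; the argument above continues to hold verbatim.
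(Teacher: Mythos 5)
Your proof is correct and complete: the pairwise orthogonality computations together with the identification $(B^n_-\oplus B^n_+)^{\perp}=\ker\delta_{n-1}^*\cap\ker\delta_n=\ker L_n$ via Lemma \ref{lemma:Laplacians} constitute exactly the standard finite-dimensional Hodge-decomposition argument. The paper itself does not prove Theorem \ref{thm:Hodge} but defers to \cite{Horak2013}, and your argument is essentially that classical proof, so there is nothing to add.
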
  
 
Theorem \ref{thm:Hodge}  Has many nice spectral interpretations, let us introduce some notation so we may discuss them.  Denote the multiset (set with multiplicity) of strictly positive eigenvalues of $L_n $, $\Lup_n$ and $\Ldown_n$,  by $\Spec_*(L_n)$, $\Spec_*(\Lup_n)$ and $\Spec_*(\Ldown_n)$, respectively.   For each $\lambda\in\Spec_*(L_n)$, $\Spec_*(\Lup_n)$ and $\Spec_*(\Ldown_n)$,  denote the corresponding eigenspace by $E^n_\lambda$, $E_\lambda^{n,+}$ and $E_\lambda^{n,-}$, respectively. \\

    \begin{corollary}\label{cor:hodge}
   Given an abstract cell complex $\X$,  we may decompose the spectrum of it's associated Laplacian $L_n$, for any $n \in \ZZ_{\geq-1}$,  as:
   $$\Spec_*(L_n) = \Spec_*(L^\up_n)\sqcup\Spec_*(L^\down_n)= \Spec_*(L^\up_{n-1})\sqcup\Spec_*(L^\down_{n+1})$$
   Moreover, the eigenspaces associated to it's eigenvalues may be described as:
   \begin{align*}
   E_\lambda^{n,+} = E^n_\lambda\cap B^n_+ 
   \hspace{2mm}  \text{ and } \hspace{2mm}  
   E_\mu^{n,-} = E^n_\mu\cap B^n_- , 
   \hspace{5mm} \text{for } 
   \lambda\in\Spec_*(L^\up_n), \mu\in\Spec_*(L^\down_n)
   \end{align*}
   And isomorphically mapped onto the eigenspaces of the adjacent dimension Laplacian by
   $E_\lambda^{n,+} \underset{\d^*}{\overset{\d}{\rightleftarrows}} E_\lambda^{n+1,-}$.
\end{corollary}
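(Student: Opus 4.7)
The plan is to leverage the orthogonal Hodge decomposition $C^n(\X) = \ker L_n \oplus B^n_- \oplus B^n_+$ from Theorem~\ref{thm:Hodge} together with the definitions $L^\up_n = \delta_n^*\delta_n$ and $L^\down_n = \delta_{n-1}\delta_{n-1}^*$ to show that $L^\up_n$ and $L^\down_n$ act non-trivially only on $B^n_+$ and $B^n_-$ respectively. Concretely, $L^\up_n$ annihilates $\ker L_n \oplus B^n_- = \ker \delta_n$ (using Lemma~\ref{lemma:Laplacians} together with $\delta_n \delta_{n-1} = 0$), and it leaves $B^n_+ = \im \delta_n^*$ invariant; symmetrically, $L^\down_n$ is supported on $B^n_-$ and annihilates $\ker \delta_{n-1}^* = \ker L_n \oplus B^n_+$. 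Because $L_n = L^\up_n + L^\down_n$ and the three Hodge summands are mutually orthogonal and invariant under both $L^\up_n$ and $L^\down_n$, the operator $L_n$ is block-diagonal with blocks $0$, $L^\up_n|_{B^n_+}$, and $L^\down_n|_{B^n_-}$.

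From this block-diagonal structure the first spectral decomposition and the eigenspace characterizations drop out. Since $L_n$ is self-adjoint and the Hodge summands are invariant, any positive-eigenvalue eigenvector of $L_n$ is orthogonal to $\ker L_n$ and decomposes into eigenvectors lying in $B^n_+$ and $B^n_-$ with the same eigenvalue — each of which is an eigenvector of $L^\up_n$ or $L^\down_n$ respectively. Conversely every positive-eigenvalue eigenvector of $L^\up_n$ lies in $B^n_+ = (\ker \delta_n)^\perp$ and is automatically an eigenvector of $L_n$ at the same eigenvalue, and similarly for $L^\down_n$. This gives $\Spec_*(L_n) = \Spec_*(L^\up_n) \sqcup \Spec_*(L^\down_n)$ as multisets, along with the identities $E^{n,+}_\lambda = E^n_\lambda \cap B^n_+$ and $E^{n,-}_\mu = E^n_\mu \cap B^n_-$.

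The dimension-shift isomorphism $E^{n,+}_\lambda \rightleftarrows E^{n+1,-}_\lambda$ is the second key step. For $f \in E^{n,+}_\lambda$, the computation $L^\down_{n+1}(\delta_n f) = \delta_n(\delta_n^*\delta_n f) = \lambda\,\delta_n f$ shows that $\delta_n$ maps $E^{n,+}_\lambda$ into $E^{n+1,-}_\lambda$, and $\delta_n f \neq 0$ whenever $\lambda > 0$ since $\lambda f = \delta_n^*\delta_n f \neq 0$; symmetrically $\delta_n^*$ maps back. Because $\delta_n^*\delta_n = \lambda\,\mathrm{Id}$ on $E^{n,+}_\lambda$ and $\delta_n\delta_n^* = \lambda\,\mathrm{Id}$ on $E^{n+1,-}_\lambda$, the maps $\delta_n$ and $\lambda^{-1}\delta_n^*$ are mutual inverses. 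This yields $\Spec_*(L^\up_n) = \Spec_*(L^\down_{n+1})$ (and, shifting indices, $\Spec_*(L^\up_{n-1}) = \Spec_*(L^\down_n)$), and substituting into the first decomposition gives the second equality.

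The one point that deserves care is the clean splitting of a positive-eigenvalue eigenvector of $L_n$ into its $B^n_+$ and $B^n_-$ components; this relies on the self-adjointness of $L_n$ (to force orthogonality to $\ker L_n$) and on the fact that $B^n_\pm$ are invariant under both $L^\up_n$ and $L^\down_n$. Everything else amounts to routine bookkeeping with the adjointness $\langle \delta_n f, g\rangle = \langle f, \delta_n^* g\rangle$ and the defining identities of the up- and down-Laplacians.
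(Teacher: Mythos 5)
Your argument is correct, and it is exactly the ``straightforward linear algebra exercise'' the paper alludes to without writing out: the block-diagonalization of $L_n$ over the Hodge decomposition of Theorem \ref{thm:Hodge} (using that $L^\up_n$ annihilates $\ker\delta_n=\ker L_n\oplus B^n_-$ and $L^\down_n$ annihilates $\ker\delta_{n-1}^*=\ker L_n\oplus B^n_+$), followed by the intertwining maps $\delta_n$ and $\lambda^{-1}\delta_n^*$ between $E^{n,+}_\lambda$ and $E^{n+1,-}_\lambda$. Since the paper supplies no proof of its own, nothing further is needed; your write-up fills the gap with the intended standard argument.
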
 

The proof to the corollary is a straightforward linear algebra exercise.

\medskip
\section{The main results}\label{results}
Here we present theorems that completely describe the spectrum of the Laplacians associated to weighted sequence and simplicial complexes under the independent vertices model. We provide  the proofs of these theorems in  Sections \ref{sec:seq cpx} and  \ref{sec:Simpl cpx}.\\

\begin{theorem}[\textbf{abridged}]\label{bigThm}  
Consider a weighted full sequence complex $(\X,w)$, with the weights of  the independent vertices model, then for any $n\geq 0$,  it's $n$-Laplacian, \, $L_n :  C^n(\X) \to C^n(\X)$ \, has the following eigenvalues: 
\begin{align*}
\lambda= \hspace{1mm}  &1,...,n+2\\
\text{with multiplicities} \hspace{53mm}&\\
\text{mult}(\lambda)= \hspace{1mm}  &\binom{n+1}{\lambda-1} \cdot (| \X_0 |-1)^{\lambda-1}. \hspace{50mm}
\end{align*}
\end{theorem}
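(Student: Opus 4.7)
The plan is to exploit the product structure of the full sequence complex. Since the weight $w((v_0,\dots,v_n))=\prod_i w_{v_i}$ factors over positions, the inner product $\langle f,g\rangle_n = \sum_{\sigma} w(\sigma) f(\sigma) g(\sigma)$ turns $C^n(\X)$ into the tensor product $(\R^V)^{\otimes(n+1)}$, where each factor $\R^V$ carries the single-coordinate weighted inner product under which the constant function $\mathbf{1}$ has unit norm. Decomposing $\R^V = \R\mathbf{1}\oplus \mathbf{1}^\perp$ yields the orthogonal splitting $C^n(\X) = \bigoplus_{S\subseteq \{0,\dots,n\}} V_S$ with
\begin{equation*}
V_S \od \bigotimes_{i\in S}\mathbf{1}^\perp \otimes \bigotimes_{j\notin S}\R\mathbf{1}, \qquad \dim V_S = (|V|-1)^{|S|}.
\end{equation*}
The strategy is to show that each $V_S$ is an eigenspace of $L_n$ with eigenvalue $|S|+1$; summing over $|S|=\lambda-1$ then immediately gives the multiplicity $\binom{n+1}{\lambda-1}(|V|-1)^{\lambda-1}$.

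To this end I introduce two families of cosimplicial operators on $C^*(\X)$: the \emph{coface} $\partial^i : C^n \to C^{n+1}$ that on pure tensors inserts $\mathbf{1}$ at position $i$, and the \emph{contraction} $A_j : C^{n+1} \to C^n$ that takes the inner product of the $j$-th tensor slot against $\mathbf{1}$. Because $w$ factors over positions, a direct computation gives $(\partial^i)^* = A_i$, and the incidence function of Definition~\ref{ex:seqcpx} yields
\begin{equation*}
\delta_n = \sum_{i=0}^{n+1}(-1)^i \partial^i, \qquad \delta_n^* = \sum_{j=0}^{n+1}(-1)^j A_j.
\end{equation*}
A case-by-case calculation on pure tensors then gives the cosimplicial-type identities $A_i\partial^i = I$, $A_j\partial^i = \partial^{i-1}A_j$ for $j<i$, $A_j\partial^i = \partial^i A_{j-1}$ for $j>i$, and $\partial^i A_i = E_i$, where $E_i$ is the position-$i$ projection onto constants. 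By construction $V_S$ is the simultaneous eigenspace on which $E_i=0$ for $i\in S$ and $E_i=I$ otherwise, so $\sum_i E_i$ acts on $V_S$ as $(n+1-|S|)I$.

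The heart of the argument is to collapse $\delta_n^*\delta_n = \sum_{i,j=0}^{n+1}(-1)^{i+j} A_j \partial^i$ using these identities. The diagonal $i=j$ contributes $(n+2)I$. Substituting $i\mapsto i+1$ in the triangle $\{j<i\}$ and $j\mapsto j+1$ in the triangle $\{j>i\}$ rewrites both off-diagonal sums as $-\sum(-1)^{i+j}\partial^i A_j$ over the closed triangles $\{j\le i\le n\}$ and $\{i\le j\le n\}$ respectively; their union counts the diagonal twice, so combining them and recognising $\sum_{i,j=0}^n(-1)^{i+j}\partial^i A_j = \delta_{n-1}\delta_{n-1}^*$ gives
\begin{equation*}
\delta_n^*\delta_n = (n+2)I - \delta_{n-1}\delta_{n-1}^* - \sum_{i=0}^n E_i.
\end{equation*}
Consequently $L_n = \delta_n^*\delta_n + \delta_{n-1}\delta_{n-1}^* = (n+2)I - \sum_{i=0}^n E_i$, so $L_n|_{V_S} = \bigl((n+2) - (n+1-|S|)\bigr)I = (|S|+1)I$, completing the proof. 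The main obstacle is the sign- and index-bookkeeping in this last reduction; everything else is straightforward linear algebra on the tensor factors.
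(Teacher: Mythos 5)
Your proposal is correct, and it reaches the result by a genuinely different (more structural) route than the paper. The paper first derives an entrywise formula for $L_n$ on the standard basis $\{e_\sigma\}$ (Proposition \ref{prop:Laplacian}, simplified in Corollary \ref{cor:IndLaplacian} to $L_n e_\sigma = [(n+2)-\sum_i w(\sigma_i)]e_\sigma - \sum_{\tau\bowtie\sigma,\,\tau_j\neq\sigma_j} w(\sigma_j)e_\tau$), then fixes a vertex $a$, builds the explicit tensor-product eigenbasis $f(\eta)=f_0(\eta_0)\otimes\cdots\otimes f_0(\eta_n)$, and verifies the eigenvector equation case by case. You instead never write $L_n$ entrywise: identifying $C^n(\X)\cong(\R^V)^{\otimes(n+1)}$ with the product inner product, checking $(\partial^i)^*=A_i$ (which needs exactly the factorization of $w$ and $\sum_v w_v=1$ so that $\|\mathbf{1}\|=1$), and collapsing $\delta_n^*\delta_n$ via the cosimplicial identities gives the clean operator identity $L_n=(n+2)I-\sum_{i=0}^n E_i$, after which the spectrum is immediate from $\R^V=\R\mathbf{1}\oplus\mathbf{1}^\perp$; I verified your index/sign bookkeeping (including the double-counted diagonal) and the resulting formula agrees with the paper's Corollary \ref{cor:IndLaplacian}, and the edge case $n=0$ with $\delta_{-1}$ works since $C^{-1}(\X)=\R$. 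The two arguments are secretly close — the paper's $f_0(x)$, $x\neq a$, are orthogonal to $\mathbf{1}$, so its eigenbasis is precisely a basis adapted to your decomposition $\bigoplus_S V_S$ — but your version buys a coordinate-free description of the eigenspaces with no preferred vertex $a$, far less sign bookkeeping, and an intrinsic explanation (via $L_n=(n+2)I-\sum_i E_i$) of why the spectrum is integral; what the paper's route buys is the general-weight formula for $L_n$ (Proposition \ref{prop:Laplacian}), which it needs independently, and an eigenbasis written explicitly in the standard coordinates $\{e_\sigma\}$. The only points worth writing out in full if you expand this are the identification $\delta_n=\sum_i(-1)^i\partial^i$ (when a vertex repeats, the incidence number $\kappa(\sigma,\tau)$ merges several insertion positions, but expanding the sum over positions recovers exactly your expression) and the pure-tensor verifications of $A_j\partial^i=\partial^{i-1}A_j$ ($j<i$), $A_j\partial^i=\partial^iA_{j-1}$ ($j>i$), $A_i\partial^i=I$, and $\partial^iA_i=E_i$.
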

We postpone  the proof until   Section \ref{sec:seq cpx}, where we give a more complete  theorem statement, which includes  explicit descriptions  of the associated eigenspaces. Let us describe here the eigenspaces for the smallest eigenvalues.\\

\begin{example}
The Laplacians $L_n$, of dimension $n \geq 0$, associated to 
a weighted full sequence complex $(\X,w)$  with weights following the independent vertices model, admit eigenvalues $\lambda= 1 \hspace{1mm} \& \hspace{1mm} 2$. Furthermore, by fixing $a \in \X_0$, we may describe the associated eigenspaces as:
\begin{align*}
    E(1,L_n)&= \Span \left\{ \sum_{\sigma \in \X_n } e_\sigma \right\},\\
    E(2,L_n)&= \Span \left\{ 
    \sum_{\substack{
    \sigma \in \X_n : \\ \sigma_i=a
    }} w(x) \cdot e_\sigma 
    -\sum_{\substack{
    \tau \in \X_n : \\ \tau_i=x
    }} w(a) \cdot e_\tau 
    \hspace{1mm} \Bigg| \hspace{2mm}
    i \in [0:n], \hspace{1mm}
    x \in \X_0 \setminus \{a\}
    \right\}.
\end{align*}
\end{example}
This example follows immediately from the full version of Theorem \ref{bigThm} given in Section \ref{sec:seq cpx}.\\

A simple corollary of our theorem, is that the cohomology of the full sequence complex is trivial.\\
\begin{corollary}
Given the full sequence complex $\X$, the cohomology is trivial, $H^n(\X)=\{0\}$,  for all $n \geq -1$.
\end{corollary}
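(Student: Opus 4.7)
The plan is to deduce the corollary directly from Theorem~\ref{bigThm} combined with the Hodge-theoretic identification $H^n(\X)\cong \ker L_n$ from Theorem~\ref{thm:Erckmann}. Since Theorem~\ref{bigThm} is about the independent vertices model, I would begin by picking any strictly positive weight vector $(w_v)_{v\in V}\in(0,1)^{|V|}$ with $\sum_v w_v=1$ — this choice is irrelevant for the cohomology, since $\ker L_n$ changes with the inner product only by isomorphism, and $H^n(\X)$ does not depend on the weights at all.

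For $n\geq 0$, Theorem~\ref{bigThm} asserts that every eigenvalue of $L_n$ lies in $\{1,2,\ldots,n+2\}$, all of which are strictly positive. The one sanity check to perform is that the listed multiplicities exhaust the dimension of $C^n(\X)$, so that no zero eigenvalue is left hiding. This is a one-line binomial computation:
\begin{equation*}
\sum_{\lambda=1}^{n+2}\binom{n+1}{\lambda-1}(|\X_0|-1)^{\lambda-1}
=\sum_{k=0}^{n+1}\binom{n+1}{k}(|\X_0|-1)^k
=|\X_0|^{n+1}=\dim C^n(\X).
\end{equation*}
Consequently $0\notin \Spec(L_n)$, so $\ker L_n=\{0\}$, and Theorem~\ref{thm:Erckmann} gives $H^n(\X)=\{0\}$.

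The case $n=-1$ falls outside the hypothesis of Theorem~\ref{bigThm} and must be treated by hand. Here $C^{-1}(\X)=\R$ (functions on the empty sequence) and $H^{-1}(\X)=\ker\delta_{-1}$, since there is no $C^{-2}$ to contribute a nontrivial image. Directly from Definition~\ref{ex:seqcpx} one has $\kappa(v,\varnothing)=1$ for every $v\in V$, so $\delta_{-1}$ sends a scalar $c$ to the constant function with value $c$ on $V$; since $V$ is nonempty this map is injective, and $H^{-1}(\X)=\{0\}$.

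There is no substantive obstacle: the corollary is essentially a repackaging of the positivity of $\Spec(L_n)$ from Theorem~\ref{bigThm} through the Hodge isomorphism. The only points requiring attention are the dimension check ensuring the spectrum in Theorem~\ref{bigThm} is complete, and the separate handling of $n=-1$.
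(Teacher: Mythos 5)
Your proposal is correct and follows essentially the same route as the paper: identify $H^n(\X)\simeq\ker L_n$ via Theorem~\ref{thm:Erckmann} and use Theorem~\ref{bigThm} to rule out the eigenvalue $0$. Your two extra checks — verifying that the stated multiplicities exhaust $\dim C^n(\X)=|V|^{n+1}$ and treating $n=-1$ by hand (which falls outside the stated range of Theorem~\ref{bigThm}) — are careful touches the paper's own proof glosses over, but they do not change the argument.
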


\begin{proof}
    Fix $n \in \ZZ_{\geq -1}$, let $L_n$ be the Laplacian associated to the $(\X,w)$ with $w$ an independent vertices model weight function on $\X$. From Theorem \ref{thm:Erckmann}, we know 
    $H^n(\X) \simeq kerL_n$. 
    Applying Theorem \ref{bigThm}, which tells us $L_n$ does not admit the $0$ eigenvalue, we conclude:
    $$H^n(\X) \simeq kerL_n=\{0\}.$$
\end{proof}

Here we should note that the homology of the {\it complex of injective words} was computed previously in \cite{Farmer78,BjornerWachs1983}, and is equal to the homology of a bouquet of spheres.  The complex of injective words is a special case of a sequence complex (see Definition \ref{ex:seqcpx}),   where each vertex is allowed to appear only once in a sequence. The full complex of sequence that we consider is the maximal possible complex on a vertex set $V$, thus it is perhaps {\it expected} that it should have a trivial homology/cohomology.

\medskip
We contrast the Laplacian spectrum of the sequence complex under the independent vertices model, with the  Laplacian associated to a simplicial complex under the same model.  Interestingly, here the eigenfunctions reveal no interesting structure. Although our weight functions differ sightly in the different contexts, we cannot explain why the beautiful integer spectrum appears in the sequence setting, but not the simplicial.\\

\newpage
\begin{theorem}\label{prop:IndSimpLap}
Let $(2^{[m]},w)$ be a weighted simplicial complex, with  weight function $w:2^{[m]} \to \R_{>0}$ such that $w(\varnothing)=1$.  Then the following two conditions are equivalent:
\begin{enumerate}
\item[(i)] $(2^{[m]},w)$ is an independent vertices model;
\item[(ii)] the associated Laplacians are multiples of identities:
\begin{equation} \label{eq: multiple of identity}
L_n=\alpha_n   I_{C^n}, \quad \forall n\geq -1,   
\end{equation}
where $I_{C^n}$ denotes  the identity on $C^n(2^{[m]})$, and $\alpha_n \in \R$.
\end{enumerate} 

Moreover, if \eqref{eq: multiple of identity} holds, then  $\alpha_n$ is constant with respect to  $n$, and  
\begin{equation*} \alpha_n = \sum_{i=1}^m w_i,\end{equation*}
where  $(w_1,\dots,w_m)$ is the vector determining the independent vertices model defining the weight function $w$.\\
\end{theorem}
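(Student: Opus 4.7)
The plan is to compute the matrix entries $(L_n e_\xi)(\zeta)$ of the Laplacian in the standard basis $\{e_\xi\}$ of $C^n(2^{[m]})$ and read off both directions of the equivalence from a single formula. Using the weighted inner product $\langle f, g\rangle_n = \sum_{|\xi|=n+1} w(\xi) f(\xi) g(\xi)$, a short calculation gives the adjoint as $(\delta_n^* g)(\xi) = w(\xi)^{-1}\sum_{\eta\supset\xi,\,|\eta|=|\xi|+1} w(\eta)\, \kappa(\eta,\xi)\, g(\eta)$. Expanding $L_n = \delta_n^*\delta_n + \delta_{n-1}\delta_{n-1}^*$ and splitting into cases on $|\xi\cap\zeta|$, I obtain: $(L_n e_\xi)(\zeta) = 0$ whenever $|\xi\cap\zeta|<n$; the diagonal entry is
\[
(L_n e_\xi)(\xi) = \sum_{i\notin\xi}\frac{w(\xi\cup\{i\})}{w(\xi)} + \sum_{j\in\xi}\frac{w(\xi)}{w(\xi\setminus\{j\})};
\]
and for $|\xi\cap\zeta|=n$, writing $\theta=\xi\cap\zeta$ and $\eta=\xi\cup\zeta$,
\[
(L_n e_\xi)(\zeta) = \frac{w(\eta)}{w(\zeta)}\kappa(\eta,\xi)\kappa(\eta,\zeta) + \frac{w(\xi)}{w(\theta)}\kappa(\xi,\theta)\kappa(\zeta,\theta).
\]

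The key algebraic input is the sign identity $\kappa(\eta,\xi)\kappa(\eta,\zeta) + \kappa(\xi,\theta)\kappa(\zeta,\theta) = 0$. It follows from axiom (3) of Definition \ref{def:ACC} applied to the pair $(\eta,\theta)$: the sum $\sum_{\xi'\in\X_n}\kappa(\eta,\xi')\kappa(\xi',\theta)$ vanishes, and only $\xi$ and $\zeta$ contribute (they are the unique $n$-faces squeezed between $\theta$ and $\eta$), so $\kappa(\eta,\xi)\kappa(\xi,\theta) = -\kappa(\eta,\zeta)\kappa(\zeta,\theta)$; multiplying through by $\kappa(\xi,\theta)\kappa(\zeta,\theta)$ (each a sign) rearranges to the stated form.

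For the direction (i) $\Rightarrow$ (ii), I substitute $w(\xi)=\prod_{i\in\xi}w_i$ into the above. The diagonal collapses to $\sum_{i\notin\xi}w_i+\sum_{j\in\xi}w_j=\sum_{i=1}^m w_i$, independent of $\xi$ and of $n$, while the off-diagonal factors as $w_a\bigl(\kappa(\eta,\xi)\kappa(\eta,\zeta)+\kappa(\xi,\theta)\kappa(\zeta,\theta)\bigr)=0$, where $\{a\}=\xi\setminus\zeta$. Hence $L_n=(\sum_i w_i)\,I_{C^n}$ with $\alpha_n=\sum_i w_i$ constant in $n$.

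For (ii) $\Rightarrow$ (i), set $w_i:=w(\{i\})$ and induct on $n$ on the claim that $w(\sigma)=\prod_{i\in\sigma}w_i$ for all $|\sigma|\leq n+2$. Running the off-diagonal formula backwards, the vanishing of $(L_n e_\xi)(\zeta)$ combined with the sign identity yields the recursion $w(\eta)\,w(\theta) = w(\xi)\,w(\zeta)$. At the base case $n=0$ (with $\theta=\varnothing$, $w(\varnothing)=1$) this reads $w(\{j,k\})=w_jw_k$; at higher $n$, the inductive hypothesis applied to $\xi,\zeta,\theta$ gives $w(\eta) = w(\xi)w(\zeta)/w(\theta) = \prod_{i\in\eta}w_i$, since $\eta=\theta\cup\{a,b\}$ and the vertex factors telescope. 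The only real obstacle is keeping the signs straight; once the identity above is isolated as a standalone consequence of $\delta^2=0$, both implications collapse into one short computation.
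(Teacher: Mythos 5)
Your proof is correct and follows essentially the same route as the paper: an explicit computation of the matrix entries of $L_n$ in the basis $\{e_\xi\}$ (the content of Lemma \ref{prop:simpLapl}, with your appeal to axiom (3) of Definition \ref{def:ACC} replacing the paper's explicit $(-1)^k$ sign bookkeeping), substitution of $w(\xi)=\prod_{i\in\xi}w_i$ for (i)$\Rightarrow$(ii), and for (ii)$\Rightarrow$(i) the relation $w(\eta)\,w(\theta)=w(\xi)\,w(\zeta)$ --- equivalent to the paper's ratio identity --- followed by induction on the cardinality of the face. The only blemish is cosmetic: to pass from $\kappa(\eta,\xi)\kappa(\xi,\theta)=-\kappa(\eta,\zeta)\kappa(\zeta,\theta)$ to your stated sign identity you should multiply by $\kappa(\eta,\zeta)\kappa(\xi,\theta)$ (or simply note that the product of all four incidence signs equals $-1$), not by $\kappa(\xi,\theta)\kappa(\zeta,\theta)$.
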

We give the proof to Theorem \ref{prop:IndSimpLap} in Section \ref{sec:Simpl cpx}. Note that this proof can be seen as a natural consequence of the results in  \cite{Horak2013}.
We can  apply this theorem  to the aforementioned moment map and simplicial distribution weight functions associated to the independent simplicial distribution.

 \begin{corollary}\label{cor:simplicial} 
Given the simplicial complex $2^{[m]}$, and the independent simplicial distribution, $p$, associated to the vector $(p_1,...,p_m) \in (0,1)^{m}$, the following two hold:
\begin{enumerate}
    \item [(i)] The Laplacian associated to the moment map weighted simplicial complex $(K,m_p)$, is  
    $$L_n = \left( \sum_{i=1}^m p_i \right)  I_{C^n}, \quad \forall n\geq -1$$

    \item [(ii)] The Laplacian associated to the distribution weighted simplicial complex $(K,p)$, is  
    $$L_n = \left(\sum_{i=1}^m \frac{p_i}{1-p_i} \right)  I_{C^n}, \quad \forall n\geq -1.$$
    
\end{enumerate}
\end{corollary}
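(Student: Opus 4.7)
The plan is to derive both parts of the corollary as direct applications of Theorem \ref{prop:IndSimpLap}, by first recognizing the weight functions in question as independent vertices models (possibly after a harmless rescaling) and then reading off the value of the constant $\alpha$ promised by that theorem.

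For part (i), I would invoke Observation \ref{obs:moment}, which tells us that the moment map of the independent simplicial distribution with vector $(p_1,\dots,p_m)$ is precisely $m_p(\xi)=\prod_{i\in\xi}p_i$ with $m_p(\varnothing)=1$. Comparing with Definition \ref{def:ind model}, this is exactly the independent vertices model on $2^{[m]}$ with weight vector $(p_1,\dots,p_m)$. Theorem \ref{prop:IndSimpLap} then applies verbatim and yields $L_n=\bigl(\sum_{i=1}^m p_i\bigr)\,I_{C^n}$ for every $n\geq -1$.

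For part (ii), the distribution $p$ itself is not directly an independent vertices model, since $p(\varnothing)=\prod_i(1-p_i)\neq 1$ in general. The key step is to pass through the normalized weight function $p_\varnothing(\xi)=p(\xi)/p(\varnothing)$ of Observation \ref{obs:norm dist}, which is an independent vertices model with vector $\bigl(p_i/(1-p_i)\bigr)_{i=1}^{m}$; applying Theorem \ref{prop:IndSimpLap} to $(K,p_\varnothing)$ produces the claimed constant $\sum_{i=1}^m p_i/(1-p_i)$. I then need to argue that $(K,p)$ and $(K,p_\varnothing)$ induce the \emph{same} Laplacian operators. This is where the main (mild) subtlety lies: it follows from the fact that the inner product $\langle f,g\rangle_w=\sum_\xi w(\xi)f(\xi)g(\xi)$ is bilinear in $w$, so replacing $w$ by a global positive scalar multiple $cw$ rescales both $\langle \delta_n^* f,g\rangle_{cw}$ and $\langle f,\delta_n g\rangle_{cw}$ by the same factor $c$, leaving the adjoint $\delta_n^*$, and hence $L_n=\delta_n^*\delta_n+\delta_{n-1}\delta_{n-1}^*$, unchanged.

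The routine observations and the scaling invariance are the only ingredients required; no further structural work is needed beyond citing Theorem \ref{prop:IndSimpLap}. If one were uneasy about applying that theorem with weights $p_i/(1-p_i)$ that may exceed $1$, one can note that its proof uses only positivity of the weights and their product structure, so the range $(0,1)$ in Definition \ref{def:ind model} is not actually essential for the conclusion $L_n=\alpha_n I_{C^n}$.
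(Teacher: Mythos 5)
Your proposal is correct and takes essentially the same route as the paper: part (i) is the direct application of Theorem \ref{prop:IndSimpLap} via Observation \ref{obs:moment}, and part (ii) passes through the normalized weights $p_\varnothing$ of Observation \ref{obs:norm dist} together with invariance of the Laplacian under a global rescaling of the weight function, which the paper cites as Lemma \ref{lemma:scaling} and you instead verify directly from the definition of the adjoint (an equally valid, arguably cleaner, justification of the same fact). Your side remark is also well taken: the weights $p_i(1-p_i)^{-1}$ need not lie in $(0,1)$ as Observation \ref{obs:norm dist} nominally asserts, but, as you note, only positivity and the product structure are used, so the conclusion is unaffected.
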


\begin{proof}
Part (i) is 
immediate from Proposition \ref{prop:IndSimpLap} and Observation \ref{obs:moment}.  
To prove part  (ii), let $L_n^p$ be the Laplacian associated to the independent simplicial distribution weight function. Define $p_\varnothing$ the weight function which to each face assigns it's probability normalized by that of the empty set, as in Observation \ref{obs:norm dist}, and let $L_n^{p_\varnothing}$ be the associated Laplacian. Lemma \ref{lemma:scaling} from Section \ref{sec:Simpl cpx} tells us $L_n^{p_\varnothing}=L_n^{p}$, hence we conclude the desired statement by applying Proposition \ref{prop:IndSimpLap} and Observation  
\end{proof}
\medskip

\section{Proof of Theorem \ref{bigThm}.}\label{sec:seq cpx}
In this section we  provide explicit formulae for  the Laplacian over weighted sequence complexes, and prove  Theorem \ref{bigThm}  by computing the  spectrum and the  associated eigenspaces in the case of the independent vertices model.\\

\subsection{Computation of the Laplacian on Sequences}
We begin by providing the formulae for the coboundary operator and its conjugate on  the sequence complex.
We will consider a weighted sequence complex $(\X,w)$ with weight function $w:\X\to\mathbb{R}_{>0}$, and vertices as $\X_0=V$. Let us introduce a basis $\{e_\sigma\}_{\sigma\in \X} $ in $C^*(\X)$  defined as:
 \begin{equation*} 
 e_{\sigma}(\tau) =\begin{cases}
1,& \text{ if } \sigma= \tau \\
0& \text{ if } \sigma \neq  \tau.
\end{cases}
 \end{equation*} 
The coboundary operator can be written  as
\begin{equation*}
\d_n e_\tau  = 
\sum_{\sigma\in \X_{n+1}}  \kappa({\sigma},{\tau}) e_\sigma.
\end{equation*}

We define an inner product on $C^*(\X)$,  associated to a positive  weight function,  $w$, via its values on the basis elements 
$\{e_{\sigma}\}$: 

 \begin{equation*}
\langle e_{\sigma}, e_{\tau}\rangle \od 
\begin{cases}
w(\sigma) , & \text{ if } \sigma= \tau \\
0& \text{ if } \sigma \neq  \tau.
\end{cases}
 \end{equation*} 

The decomposition $ C^*(\X)=\displaystyle\bigoplus_{n \geq -1} C^n(\X)$ is an orthogonal decomposition with respect to this inner product. 
 
Given this inner product, consider  the adjoint of $\d_n$, the operator $\d_n^*:C^{n+1}(\X)\to C^{n}(\X)$, where $\langle \d_n f,g\rangle =\langle  f,\d_n^*g\rangle $ for  any  $f\in C^{n}(\X) $ and $g\in C^{n+1}(\X)$. 
\begin{equation*}
\d_n^* e_\sigma  = 
\sum_{\tau \in \X_n}  \alpha_\sigma(\tau) e_\tau 
\end{equation*}

for some $\alpha:\X_{n+1}  \times \X_n \to \mathbb{R}$. Following this notation,  for any $e_\eta\in C^n(\X)$:
\begin{align*}
\langle \d_n^* e_\sigma, e_\eta \rangle &= \sum_{\tau \in \X_n}  \alpha_\sigma(\tau)\langle e_\tau, e_\eta \rangle
= \alpha_\sigma(\eta)w(\eta)
\end{align*}

\noindent On the other hand,  by definition of the adjoint, we may rewrite the inner product as:
\begin{align*}
\langle \d_n^* e_\sigma, e_\eta \rangle &= \langle e_\sigma,  \d_n e_\eta \rangle
=\sum_{\omega \in \X_{n+1} }  \kappa({\omega},{\eta})\langle e_\sigma, e_\omega \rangle
= \kappa({\sigma},{\eta}) w(\sigma)
\end{align*}

We conclude the adjoint of the coboundary operator $\d_n$ for all $n\in \mathbb{Z}_{\geq -1}$, can be written  over any basis element,  $e_\sigma \in C^{n+1}(\X)$,  as:
\begin{equation*}
\d_n^* e_\sigma  = 
\sum_{\tau \in \X_n}  \frac{w(\sigma)}{w(\tau)}  \kappa({\sigma},{\tau}) e_\tau
\end{equation*}

\medskip
Our first step in the proof of Theorem \ref{bigThm} will  be to give an explicit formula for the Laplacian on a weighted sequence complex.  We preface this with the following notation. \\

\begin{definition}\label{defs} We introduce a relation on sequences, along with some operations.
\begin{itemize}
\item[(1)]\label{def:swapped} We say $\sigma \bowtie \tau$ if $\sigma, \tau$ have a ``swapped" vertex, that is:
$$\sigma \bowtie \tau \iff \exists \text{ } i \text{ s.t.  } \sigma_j = \tau_j \text{ } \forall j \neq i$$
Notice in particular that we consider $\sigma \bowtie \sigma$.\\

\medskip
\item[(2)]\label{def:glue} We define an  operation to ``glue in" a vertex into a specific spot (thus expanding the length of the sequence):
\begin{align*}
 \X_n \times [0:n] \times \X_0 &\to \X_{n+1}\\
\glue{\sigma}{i}{a} &\mapsto \tau \text{ } \text{ }  \text{s.t.  } \tau_j =
\begin{cases}
 \sigma_j \text{ ,  } \text{ } \text{ } \text{ }   j=0,...,i-1\\
 a \text{ ,  } \text{ } \text{ } \text{ }  \text{ }  j=i\\
 \sigma_{j-1} \text{ ,  } j=i+1,...,n+1\\
\end{cases} 
\end{align*}

\item[(3)]\label{def:remove} As well as an operation  which ``removes" the vertex from a specific spot (thus shortening the length):
\begin{align*}
 \X_n \times [0:n] &\to \X_{n-1}\\
\remove{\sigma}{i} &\mapsto \tau  \text{ } \text{ } \text{s.t.  } \tau_j =
\begin{cases}
 \sigma_j \text{ ,  } \text{ } \text{ } \text{ }   j=0,...,i-1\\
 \sigma_{j+1} \text{ ,  } j=i,...,n-1\\
\end{cases} 
\end{align*}

\item[(4)]\label{def:swap} And finally,  an operation that ``swaps" in a vertex, into a specific spot (thus eliminating the previous vertex in that spot, and preserving the length of the sequence):
\begin{align*}
 \X_n \times [0:n] \times \X_0 &\to \X_n\\
\swap{\sigma}{i}{a} &\mapsto \tau \text{ }  \text{ } \text{ s.t.  } \tau \bowtie \sigma \text{ and } \tau_i=a
\end{align*}

\end{itemize}
\end{definition}

It will be of particular use,  to understand how these functions interact with each other.  We summarize this in the following observation, which is straightforward to verify.\\
\begin{observation}\label{compositions}
Composing the \textit{glue} and \textit{remove} function at the appropriate index yield the \textit{swap} function. The order in which the functions are applied mean different indices must be involved. Given a sequence complex $X$, for any $\sigma \in \X$ and $a \in \X_0$, $i = 0, \dots, \mid \sigma \mid $ :
$$\swap{\sigma}{i}{a}
= \glue{(\remove{\sigma}{i})}{i}{a}
= \remove{(\glue{\sigma}{i}{a})}{i+1}$$

Although the glue and remove functions are not commutative, we may change the order in which they are applied without altering the resulting sequence,  by modifying the involved indices.  Specifically for any $\sigma \in \X$ and $a \in V$, $i = 0, \dots, \dim(\sigma) $,  $j = 0, \dots, \dim(\sigma)+1$ :
\begin{align*}
\remove{(\glue{\sigma}{i}{a})}{j} &= 
\begin{cases}
\glue{(\remove{\sigma}{j})}{i-1}{a} \hspace{12mm} \text{ if } j<i\\
\sigma \hspace{27mm}  \text{ if } j=i\\
\glue{(\remove{\sigma}{j-1})}{i}{a} \hspace{11.5mm} \text{ if } j>i\\
\end{cases} 
\end{align*}
\end{observation}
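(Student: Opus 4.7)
The plan is to treat the statement as a purely combinatorial assertion about how three piecewise-defined operations on sequences compose, and verify each identity entry by entry: for an arbitrary output position $k$, compute the vertex assigned to position $k$ on both sides using Definition \ref{defs}. The only delicate point throughout is index bookkeeping, since each \emph{glue} shifts indices strictly past the insertion point up by one, while each \emph{remove} shifts indices at or past the removal point down by one. I would introduce short temporary names for intermediate sequences (say $\tau$ for the inner operation and $\rho$ for the outer one) and write out their piecewise rules before substituting, to avoid confusing positions in $\sigma$ with positions in the intermediate or final sequence.

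For the chain $\swap{\sigma}{i}{a} = \glue{(\remove{\sigma}{i})}{i}{a} = \remove{(\glue{\sigma}{i}{a})}{i+1}$, I would first expand the middle term: $\remove{\sigma}{i}$ agrees with $\sigma$ on positions $<i$ and equals $\sigma_{j+1}$ for $j\geq i$, and then gluing $a$ at position $i$ shifts this tail back to the right by one. The net effect agrees with $\sigma$ off of $i$ and places $a$ at position $i$, which is the defining property of $\swap{\sigma}{i}{a}$. A symmetric unfolding handles the third expression: $\glue{\sigma}{i}{a}$ inserts $a$ at $i$ and pushes $\sigma_i,\sigma_{i+1},\dots$ to positions $i+1,i+2,\dots$, and then removing position $i+1$ undoes exactly that shift while leaving the inserted $a$ at position $i$.

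For the three-case formula for $\remove{(\glue{\sigma}{i}{a})}{j}$, I would split on the comparison of $j$ with $i$ exactly as stated. The case $j=i$ is immediate: the removal destroys the just-inserted $a$ and the surrounding entries are untouched, so the result is $\sigma$. For $j<i$, the remove acts to the left of the inserted $a$, so the inserted entry survives but now sits at position $i-1$ of the result; matching entries position by position then gives $\glue{(\remove{\sigma}{j})}{i-1}{a}$. The case $j>i$ is dual: the remove acts to the right of the insertion, so $a$ stays at position $i$, while the slot that disappears corresponds to $\sigma_{j-1}$, yielding $\glue{(\remove{\sigma}{j-1})}{i}{a}$. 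The main obstacle, insofar as there is one, is purely clerical: keeping the three indexings (in $\sigma$, in the intermediate sequence, and in the output) straight, which the temporary-name convention above is designed to handle.
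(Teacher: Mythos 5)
Your entrywise verification, tracking how each glue shifts indices past the insertion point up and each remove shifts indices at or past the removal point down, is correct in all cases, including the slightly delicate $j<i$ case where the inserted vertex ends up at position $i-1$. The paper offers no proof at all for this observation (it is declared ``straightforward to verify''), and your direct case-by-case index bookkeeping is exactly the routine check the authors intend, so there is nothing to add.
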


\medskip
We can rewrite the coboundary operator and it's adjoint in an explicit fashion using these operations. We present these expressions in the next observation.\\
\begin{observation}
Given a weighted sequence complex $(\X,w)$, the coboundary operator $\d_n$,  and it's adjoint $\d_n^*$,  for any $n\in \geq -1$,  can be written over any basis elements $e_\sigma \in C^n(\X)$ and $e_\tau \in C^{n+1}(\X)$ as:
\begin{alignat*}{2}
\d_n e_\sigma &= \sum_{\sigma' \in \X_{n+1}} \kappa(\sigma',\sigma)\cdot e_{\sigma'}
=\sum_{i=0}^{n+1} \sum_{a \in \X_0} (-1)^i \cdot e_{\glue{\sigma}{i}{a}}
\\
\text{ }\\
\d_n^* e_\tau &= \sum_{\tau' \in \X_{n}} \dfrac{w(\tau)}{w(\tau')} \kappa(\tau,\tau')\cdot e_{\tau'}
=\sum_{i=0}^{n+1} \dfrac{w(\tau)}{w(\remove{\tau}{i})} (-1)^i \cdot e_{\remove{\tau}{i}}
\end{alignat*}    
\end{observation}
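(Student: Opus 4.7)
The proposal is to verify the two claimed equalities one operator at a time; in each case the first equality is already established earlier in Subsection \ref{sec:seq cpx} (the definition of $\d_n$ from Subsection \ref{subsec:hom} applied to the sequence incidence function, and the derived expression for $\d_n^*$), so the real work is showing that the $\kappa$-weighted sum over cofaces/faces reindexes to a sum over positions $i$ and vertices $a$ via the $\glue{}{}{}$ and $\remove{}{}$ operations.

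For the coboundary, I would substitute the sequence-complex definition of $\kappa$ into $\d_n e_\sigma = \sum_{\sigma' \in \X_{n+1}} \kappa(\sigma',\sigma)\, e_{\sigma'}$ to get
\begin{equation*}
\d_n e_\sigma \;=\; \sum_{\sigma' \in \X_{n+1}} \Bigl(\sum_{i:\,\remove{\sigma'}{i}=\sigma} (-1)^i\Bigr) e_{\sigma'}
\;=\; \sum_{i=0}^{n+1}\sum_{\sigma':\,\remove{\sigma'}{i}=\sigma}(-1)^i e_{\sigma'},
\end{equation*}
after swapping the order of summation. Then I would observe that, for each fixed index $i\in[0{:}n{+}1]$, the condition $\remove{\sigma'}{i}=\sigma$ is equivalent to $\sigma'=\glue{\sigma}{i}{a}$ for a unique $a\in\X_0$, namely $a=\sigma'_i$; this sets up a bijection between $\{\sigma'\in\X_{n+1}:\remove{\sigma'}{i}=\sigma\}$ and $\X_0$, yielding the claimed expression $\sum_{i=0}^{n+1}\sum_{a\in\X_0}(-1)^i e_{\glue{\sigma}{i}{a}}$.

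For the adjoint the argument is even simpler because no vertex-summation appears. Starting from $\d_n^* e_\tau = \sum_{\tau'\in\X_n}\tfrac{w(\tau)}{w(\tau')}\kappa(\tau,\tau')\, e_{\tau'}$ and substituting $\kappa(\tau,\tau')=\sum_{i:\remove{\tau}{i}=\tau'}(-1)^i$, swapping sums gives
\begin{equation*}
\d_n^* e_\tau \;=\; \sum_{i=0}^{n+1}\sum_{\tau':\,\remove{\tau}{i}=\tau'}\frac{w(\tau)}{w(\tau')}(-1)^i e_{\tau'}.
\end{equation*}
Now for each fixed $i$ the inner sum contains the single term $\tau'=\remove{\tau}{i}$ (since $\remove{\tau}{i}$ is, by definition, a single sequence), so the expression collapses to $\sum_{i=0}^{n+1}\tfrac{w(\tau)}{w(\remove{\tau}{i})}(-1)^i e_{\remove{\tau}{i}}$, as claimed.

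There is no real obstacle here beyond bookkeeping: the only thing that could go wrong is if distinct $(i,a)$ pairs produced the same cell $\glue{\sigma}{i}{a}$ with inconsistent signs, but this is prevented by the bijective reindexing $\sigma'\leftrightarrow(i,\sigma'_i)$ at each position, which is exactly the content of the $\remove{}{}$/$\glue{}{}{}$ inversion recorded in Observation \ref{compositions}. Thus the proof is essentially a change-of-variables in the sum using these two operations.
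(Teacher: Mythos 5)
Your proof is correct and matches the paper's intent: the paper dismisses this observation with the single remark that it ``stems immediately from the definition of the incidence function and of the gluing and removing operations,'' and your change-of-variables via the bijection $\sigma'\leftrightarrow(i,\sigma'_i)$ for each fixed insertion position $i$ is exactly the bookkeeping being elided. The only cosmetic quibble is that distinct pairs $(i,a)$ \emph{can} yield the same cell $\glue{\sigma}{i}{a}$ (e.g.\ for repeated vertices); this is not ``prevented'' but rather correctly accounted for, since the resulting coefficient of $e_{\sigma'}$ is precisely $\sum_{i:\,\remove{\sigma'}{i}=\sigma}(-1)^i=\kappa(\sigma',\sigma)$.
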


The observation stems  immediately from the definition of incidence function in Definition \ref{ex:seqcpx}, and of the \textit{gluing} and \textit{removing} operations.  We may apply the observation to compute an explicit definition for the Laplacian operator over a weighted sequence complex. We warn the reader this computation is quite gruesome.\\

\begin{proposition}\label{prop:Laplacian}
Given a weighted sequence complex $(\X,w)$ , over vertices $\X_0=V$,  for all $n \in \mathbb{Z}_{ \geq -1}$,  it's Laplacian of dimension $n$ is given over basis element $e_\sigma \in C^n(\X)$ by:
  \begingroup
\addtolength{\jot}{0.5em}
\begin{align*}
&L_n(e_\sigma) 
=\hspace{5mm} \sum_{i=0}^{n+1} \sum_{a \in V} \dfrac{w(\glue{\sigma}{i}{a})}{w(\sigma)} \cdot e_\sigma 
+ \sum_{i=0}^n \sum_{j=0}^i \sum_{a \in V} (-1)^{i+j} \cdot \left[ \dfrac{w(\sigma)}{w(\remove{\sigma}{j})} - \dfrac{w(\glue{\sigma}{i+1}{a})}{w\left(\glue{(\remove{\sigma}{j})}{i}{a}\right)} \right] \cdot e_{\glue{\left(\remove{\sigma}{j}\right)}{i}{a}}\\
&- \sum_{i=0}^n \sum_{a \in V} \left[ 
\dfrac{w(\glue{\sigma}{i}{a})}{w(\swap{\sigma}{i}{a})}
+\dfrac{w(\glue{\sigma}{i+1}{a})}{w(\swap{\sigma}{i}{a})}
-\dfrac{w(\sigma)}{w(\remove{\sigma}{i})}
\right] \cdot e_{\swap{\sigma}{i}{a}} 
+ \sum_{i=0}^n \sum_{j=i+1}^n \sum_{a \in V} (-1)^{i+j} \cdot \left[ \dfrac{w(\sigma)}{w(\remove{\sigma}{j})} - \dfrac{w(\glue{\sigma}{i}{a})}{w\left(\glue{(\remove{\sigma}{j})}{i}{a}\right)} \right] \cdot e_{\glue{\left(\remove{\sigma}{j}\right)}{i}{a}} 
\end{align*} 
\endgroup
\end{proposition}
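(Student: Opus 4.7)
The plan is to apply the definition $L_n = \delta_n^*\delta_n + \delta_{n-1}\delta_{n-1}^*$ directly to a basis element $e_\sigma \in C^n(\X)$, using the closed-form expressions for the coboundary operator and its adjoint given in the preceding observation. Each of the two compositions becomes a triple sum indexed by a pair of positions and a vertex $a \in V$, and the task is to collect the resulting summands according to which basis element of $C^n(\X)$ they produce. By Observation \ref{compositions}, only three types of target basis elements can appear: the original $e_\sigma$ (the diagonal), swapped sequences $e_{\swap{\sigma}{i}{a}}$, and glue-remove targets $e_{\glue{(\remove{\sigma}{j})}{i}{a}}$ with $i\neq j$.

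For the up-Laplacian, expanding $\delta_n^*\delta_n e_\sigma$ yields a sum of the form $\sum_{i,j,a}(-1)^{i+j}\, \bigl(w(\glue{\sigma}{i}{a})/w(\remove{(\glue{\sigma}{i}{a})}{j})\bigr)\, e_{\remove{(\glue{\sigma}{i}{a})}{j}}$. Using Observation \ref{compositions} to evaluate the inner sequence, the diagonal case $j=i$ collapses to $\sum_{i,a}\bigl(w(\glue{\sigma}{i}{a})/w(\sigma)\bigr)\, e_\sigma$, matching the first term of the proposition. The cases $j<i$ and $j>i$ produce glue-remove targets after the index shifts $i\mapsto i-1$ and $j\mapsto j-1$ respectively; the boundary collisions in each of these shifted ranges land on $e_{\swap{\sigma}{i}{a}}$ and contribute the two swap coefficients $-w(\glue{\sigma}{i+1}{a})/w(\swap{\sigma}{i}{a})$ and $-w(\glue{\sigma}{i}{a})/w(\swap{\sigma}{i}{a})$. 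For the down-Laplacian, expanding $\delta_{n-1}\delta_{n-1}^* e_\sigma$ gives a sum in which every summand already has the target form $e_{\glue{(\remove{\sigma}{j})}{i}{a}}$ with coefficient $(-1)^{i+j} w(\sigma)/w(\remove{\sigma}{j})$; the subcase $i=j$ produces the third swap contribution $+w(\sigma)/w(\remove{\sigma}{i})$.

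Pairing the up- and down-contributions at each ordered pair $(i,j)$ with $i\neq j$ assembles the two bracketed glue-remove coefficients stated in the proposition, and summing the three swap contributions yields the single bracketed swap coefficient. The main obstacle is precisely the one the authors warn about: careful bookkeeping of the signs $(-1)^{i+j}$ through the index shifts forced by Observation \ref{compositions}, and splitting the summation ranges so that each target basis element is counted exactly once across the up- and down-parts. Conceptually, no ingredient beyond the two operator formulas and Observation \ref{compositions} is needed.
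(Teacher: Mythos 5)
Your proposal is correct and takes essentially the same route as the paper's proof: expand $\Lup_n=\delta_n^*\delta_n$ and $\Ldown_n=\delta_{n-1}\delta_{n-1}^*$ on $e_\sigma$, rewrite $\remove{(\glue{\sigma}{i}{a})}{j}$ via Observation \ref{compositions}, shift indices, and collect the diagonal, swap, and glue--remove contributions exactly as you describe. Note only that your pairing of glue--remove terms over $i\neq j$ is what the computation (and the paper's own proof) actually produces, i.e.\ the inner range is $j\le i-1$ rather than the $j\le i$ printed in the statement; this discrepancy is immaterial for the paper's later use because those brackets vanish under the independent vertices model.
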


\begin{proof}
We compute $L_n^{down}$ and $L_n^{up}$ separately.  Consider a cochain $e_\sigma \in C^n(\X)$,  for $L_n^{down}$ we simply plug in the definitions of $\d_{n-1}$ and $\d_{n-1}^*$, and then split the sum, so that we may substitute $\glue{(\remove{\sigma}{i})}{i}{a} = \swap{\sigma}{i}{a}$ for one of the terms.
  \begingroup
\addtolength{\jot}{0.8em}
\begin{align*}
&L_n^{down} e_\sigma = \d_{n-1} \d_{n-1}^* e_\sigma
= \d_{n-1} \left( \sum_{j=0} ^n (-1)^j \cdot \dfrac{w(\sigma)}{w(\remove{\sigma}{j})} \cdot  e_{\remove{\sigma}{j}} \right)
= \sum_{i=0} ^n \sum_{j=0}^n \sum_{a \in V} (-1)^{i+j} \cdot \dfrac{w(\sigma)}{w(\remove{\sigma}{j})} \cdot  e_{\glue{\left(\remove{\sigma}{j}\right)}{i}{a}}\\
=& \sum_{i=0} ^n \sum_{j=0}^{i-1} \sum_{a \in V} (-1)^{i+j} \cdot \dfrac{w(\sigma)}{w(\remove{\sigma}{j})} \cdot  e_{\glue{\left(\remove{\sigma}{j}\right)}{i}{a}}
 + \sum_{i=0} ^n \sum_{a \in V} (-1)^{2i} \cdot \dfrac{w(\sigma)}{w(\remove{\sigma}{i})} \cdot  e_{\swap{\sigma}{i}{a}}
 + \sum_{i=0} ^n \sum_{j=i+1}^n \sum_{a \in V} (-1)^{i+j} \cdot \dfrac{w(\sigma)}{w(\remove{\sigma}{j})} \cdot  e_{\glue{\left(\remove{\sigma}{j}\right)}{i}{a}}\\
\end{align*}
\endgroup

For $L_n^{up}$ we also begin by plugging in the definitions of $\d_n$ and $\d_n^*$, and then splitting the sum so that we may substitute $\remove{(\glue{\sigma}{i}{a})}{j}=\sigma$ for one of the terms.  Moreover, we also swap $\remove{(\glue{\sigma}{i}{a})}{j}$ for $\glue{(\remove{\sigma}{j})}{i-1}{a}$, when $j<i$; and swap $\remove{(\glue{\sigma}{i}{a})}{j}$ for $\glue{(\remove{\sigma}{j-1})}{i}{a}$, when $j>i$, as per observation \ref{compositions}.
  \begingroup
\addtolength{\jot}{0.8em}
\begin{align*}
&L_n^{up} e_\sigma = \d_n^* \d_n e_\sigma
= \d_n^* \left( \sum_{i=0}^{n+1} \sum_{a \in V} (-1)^i \cdot e_{\glue{\sigma}{i}{a}} \right)
= \sum_{i=0}^{n+1} \sum_{j=0}^{n+1} \sum_{a \in V} (-1)^{i+j} \cdot \dfrac{w(\glue{\sigma}{i}{a})}{w\left(\remove{(\glue{\sigma}{i}{a})}{j}\right)} \cdot e_{\remove{\left(\glue{\sigma}{i}{a}\right)}{j}}\\
=& \sum_{i=0}^{n+1} \sum_{j=0}^{i-1} \sum_{a \in V} (-1)^{i+j} \cdot \dfrac{w(\glue{\sigma}{i}{a})}{w\left(\glue{(\remove{\sigma}{j})}{i-1}{a}\right)} \cdot e_{\glue{\left(\remove{\sigma}{j}\right)}{i-1}{a}}
 + \sum_{i=0}^{n+1} \sum_{a \in V} (-1)^{2i} \cdot \dfrac{w(\glue{\sigma}{i}{a})}{w(\sigma)} \cdot e_{\sigma}\\
&\hspace{60mm}+ \sum_{i=0}^{n+1} \sum_{j=i+1}^{n+1} \sum_{a \in V} (-1)^{i+j} \cdot \dfrac{w(\glue{\sigma}{i}{a})}{w\left(\glue{(\remove{\sigma}{j-1})}{i}{a}\right)} \cdot e_{\glue{\left(\remove{\sigma}{j-1}\right)}{i}{a}}
\end{align*}
\endgroup

We follow by shifting the first sum over $i$ and the third sum over $j$. We then proceeded by eliminating the vacuous terms of the sums, and splitting the first and third sum so as to substitute in $\glue{(\remove{\sigma}{i})}{i}{a} = \swap{\sigma}{i}{a}$.
  \begingroup
\addtolength{\jot}{0.8em}
\begin{align*}
L_n^{up} e_\sigma &= \sum_{i=-1}^{n} \sum_{j=0}^{i} \sum_{a \in V} (-1)^{i+j+1} \cdot \dfrac{w(\glue{\sigma}{i+1}{a})}{w\left(\glue{(\remove{\sigma}{j})}{i}{a}\right)} \cdot e_{\glue{\left(\remove{\sigma}{j}\right)}{i}{a}}
 +  
\sum_{i=0}^{n+1} \sum_{a \in V}  \dfrac{w(\glue{\sigma}{i}{a})}{w(\sigma)} \cdot e_{\sigma}\\
&\hspace{2mm} + \sum_{i=0}^{n+1} \sum_{j=i}^{n} \sum_{a \in V} (-1)^{i+j+1} \cdot \dfrac{w(\glue{\sigma}{i}{a})}{w\left(\glue{(\remove{\sigma}{j})}{i}{a}\right)} \cdot e_{\glue{\left(\remove{\sigma}{j}\right)}{i}{a}}\\
&= \sum_{i=0}^{n} \sum_{j=0}^{i-1} \sum_{a \in V} (-1)^{i+j+1} \cdot \dfrac{w(\glue{\sigma}{i+1}{a})}{w\left(\glue{(\remove{\sigma}{j})}{i}{a}\right)} \cdot e_{\glue{\left(\remove{\sigma}{j}\right)}{i}{a}}
 + \sum_{i=0}^{n} \sum_{a \in V} (-1)^{2i+1} \cdot
 \dfrac{w(\glue{\sigma}{i+1}{a})}{w(\swap{\sigma}{i}{a})} 
 \cdot e_{\swap{\sigma}{i}{a}}\\
&\hspace{2mm} + 
\sum_{i=0}^{n+1} \sum_{a \in V}  \dfrac{w(\glue{\sigma}{i}{a})}{w(\sigma)} \cdot e_{\sigma} + \sum_{i=0}^{n} \sum_{a \in V} (-1)^{2i+1} \cdot
  \dfrac{w(\glue{\sigma}{i}{a})}{w(\swap{\sigma}{i}{a})} 
 \cdot e_{\swap{\sigma}{i}{a}}\\
&\hspace{2mm} + \sum_{i=0}^{n} \sum_{j=i+1}^{n} \sum_{a \in V} (-1)^{i+j+1} \cdot \dfrac{w(\glue{\sigma}{i}{a})}{w\left(\glue{(\remove{\sigma}{j})}{i}{a}\right)} \cdot e_{\glue{\left(\remove{\sigma}{j}\right)}{i}{a}}\\
\end{align*}
\endgroup

Adding our computations for $L_n^{down}$ and $L_n^{up}$,  we conclude the theorem statement.
\end{proof}

\begin{corollary} \label{cor:IndLaplacian}
Given a weighted full sequence complex $(\X,w)$, over vertices $\X_0=V$, following the independent vertices model (Definition \ref{def:ind seq}), the Laplacian of dimension $n$, for all $n\in \mathbb{Z}_{\geq -1}$,  can be simply described over basis element $e_\sigma \in C^n(\X)$ as:
\begin{equation*}
L_n(e_\sigma) =\left[ (n+2) -\sum_{i=0}^n w(\sigma_i) \right] \cdot e_\sigma
- \sum_{\substack{\tau :\tau \bowtie \sigma,\\ \tau_j \neq \sigma_j}} w(\sigma_j) \cdot e_{\tau}
\end{equation*} 
\end{corollary}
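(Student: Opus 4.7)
The plan is to obtain the statement by direct substitution into the general formula from Proposition \ref{prop:Laplacian}, exploiting the multiplicative structure $w(\sigma)=\prod_{i}w_{\sigma_i}$ of the independent vertices model. Under this model every weight ratio appearing in Proposition \ref{prop:Laplacian} collapses to a monomial in the vertex weights: for instance $w(\glue{\sigma}{i}{a})/w(\sigma)=w_a$, $w(\sigma)/w(\remove{\sigma}{j})=w_{\sigma_j}$, $w(\swap{\sigma}{i}{a})=w(\sigma)\, w_a/w_{\sigma_i}$, and crucially
\[
\frac{w(\glue{\sigma}{i+1}{a})}{w\!\left(\glue{(\remove{\sigma}{j})}{i}{a}\right)} \;=\; \frac{w(\sigma)\,w_a}{w(\sigma)\,w_a/w_{\sigma_j}} \;=\; w_{\sigma_j}.
\]

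The first key observation is that the two "glue-after-remove" cross terms in Proposition \ref{prop:Laplacian} contain the bracket $\bigl[w(\sigma)/w(\remove{\sigma}{j}) - w(\glue{\sigma}{i\pm 1}{a})/w(\glue{(\remove{\sigma}{j})}{i}{a})\bigr]$, which by the computation above equals $w_{\sigma_j}-w_{\sigma_j}=0$. Thus both double sums over $j\neq i$ vanish entirely, which is what makes the final expression so clean. Next, the diagonal coefficient simplifies to $\sum_{i=0}^{n+1}\sum_{a\in V} w_a = (n+2)\sum_{a\in V}w_a = n+2$ using the normalization $\sum_{v}w_v=1$ from Definition \ref{def:ind seq}.

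For the swap terms, substituting the ratios gives coefficient $w_{\sigma_i}+w_{\sigma_i}-w_{\sigma_i}=w_{\sigma_i}$, so the swap contribution is $-\sum_{i=0}^{n}\sum_{a\in V} w_{\sigma_i}\cdot e_{\swap{\sigma}{i}{a}}$. I then split this sum into the subcase $a=\sigma_i$, where $\swap{\sigma}{i}{\sigma_i}=\sigma$ and the contribution collapses into the diagonal as $-\sum_{i=0}^{n} w_{\sigma_i}\cdot e_\sigma$, and the subcase $a\neq \sigma_i$, where the map $(i,a)\mapsto \swap{\sigma}{i}{a}$ is a bijection onto the set of sequences $\tau\bowtie\sigma$ with $\tau\neq\sigma$ (recovering the index of disagreement uniquely). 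Combining the diagonal contributions $(n+2)\cdot e_\sigma$ and $-\sum_{i=0}^{n}w(\sigma_i)\cdot e_\sigma$ with the off-diagonal swap sum indexed over $\tau\bowtie\sigma$ yields exactly the stated formula.

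There is no genuine obstacle here: the content of the proof is purely bookkeeping. The one step that requires care is the index shifting/splitting in the swap sum and recognizing the bijection between pairs $(i,a)$ with $a\neq\sigma_i$ and sequences $\tau\bowtie\sigma$ with $\tau\neq\sigma$ (so that the coefficient $w_{\sigma_i}$ can be rewritten as $w(\sigma_j)$ where $j$ is the unique index at which $\tau$ and $\sigma$ differ). Once that identification is made and the two vanishing brackets are noted, the corollary follows in a few lines.
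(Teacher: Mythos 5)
Your proposal is correct and follows essentially the same route as the paper's own proof: substituting the multiplicative structure of the independent vertices model into Proposition \ref{prop:Laplacian}, observing that the glue-after-remove brackets vanish and the diagonal collapses to $n+2$ via $\sum_{v\in V}w_v=1$, and then splitting the swap sum into the $a=\sigma_i$ case (absorbed into the diagonal) and the $a\neq\sigma_i$ case (reindexed over $\tau\bowtie\sigma$, $\tau\neq\sigma$). Your explicit remark that $(i,a)\mapsto\swap{\sigma}{i}{a}$ with $a\neq\sigma_i$ bijects onto such $\tau$ is the same identification the paper uses implicitly in its final reindexing step.
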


\begin{proof}

Let us simplify the terms that appear in proposition \ref{prop:Laplacian}. Fix $\sigma \in \X_n$ for some $n \in \ZZ_{\geq -1}$,  and $i \in [0:n]$,  $a \in V$, by applying the independence model hypothesis we can write out:
 \begingroup
\addtolength{\jot}{1em}
\begin{alignat*}{2}
 \dfrac{w(\glue{\sigma}{i}{a})}{w(\sigma)}  = \dfrac{\Big(\prod_{j=0}^{i-1} w(\sigma_j) \Big) \cdot w(a) \cdot \Big(\prod_{j=i}^n w(\sigma_j)\Big)}{\prod_{j=0}^n w(\sigma_j)}= w(a) 
\end{alignat*}
\endgroup

Similarly, we may write out and cancel terms to get:
\medskip
 \begingroup
\addtolength{\jot}{1em}
\begin{alignat*}{2}
w(\sigma_i)&= \dfrac{w(\sigma)}{w(\remove{\sigma}{i})}
= \dfrac{w(\glue{\sigma}{i}{a})}{w(\swap{\sigma}{i}{a})}
= \dfrac{w(\glue{\sigma}{i+1}{a})}{w(\swap{\sigma}{i}{a})} \\
w(\sigma_j) &= \dfrac{w(\sigma)}{w(\remove{\sigma}{j})} 
= \dfrac{w(\glue{\sigma}{i+1}{a})}{w\left(\glue{(\remove{\sigma}{j})}{i}{a}\right)}
=  \dfrac{w(\glue{\sigma}{i}{a})}{w\left(\glue{(\remove{\sigma}{j})}{i}{a}\right)}
\end{alignat*}
\endgroup

\medskip
Substituting these equalities into the expression from Proposition \ref{prop:Laplacian},  simplifies it to:
\begin{align}
L_n(e_\sigma) =& \sum_{i=0}^{n+1} \sum_{a \in V} w(a) \cdot e_\sigma
- \sum_{i=0}^n \sum_{a \in V} \big[ w(\sigma_i) + 0 \big] \cdot e_{\swap{\sigma}{i}{a}} 
+ \sum_{i=0}^n \sum_{j=0}^i \sum_{a \in V}  0  \cdot e_{\glue{\left(\remove{\sigma}{j}\right)}{i}{a}} 
+ \sum_{i=0}^n \sum_{j=i+1}^n \sum_{a \in V} 0  \cdot e_{\glue{\left(\remove{\sigma}{j}\right)}{i}{a}} \nonumber\\
=&(n+2) \cdot e_\sigma
- \sum_{i=0}^n \sum_{a \in V} w(\sigma_i) \cdot e_{\swap{\sigma}{i}{a}}
=\left[ (n+2) -\sum_{i=0}^n w(\sigma_i) \right] \cdot e_\sigma
- \sum_{\substack{\tau :\tau \bowtie \sigma,\\ \tau_j \neq \sigma_j}} w(\sigma_j) \cdot e_{\tau} \nonumber
\end{align}
Where  the second equality follows from $\sum_{v\in V} w(v)=1$. This concludes the corollary statement.
\end{proof}

\medskip
\subsection{Eigenspace for Laplacian on  Sequences under  Independent Vertices Model }
For the study of the independent vertices model,  it is useful to introduce a tensor product on the space of cochains, the definition follows:\\
\begin{definition}
   For cochains $u \in C^k(\X)$ and $v \in C^l(\X)$, the \emph{tensor product} $u \otimes v \in C^{k+l+1}(\X)$ is the cochain whose value on a sequence $\sigma \in \X_{k+l+1}$ is given by the formula:
$$(u \otimes v)(\sigma)=u(\sigma \vert_{[0 : k]}) \cdot v(\sigma \vert_{[k+1 : k+l+1]})$$
Here $\sigma |_{[i:j]}$ refers to the restriction of the sequence $\sigma$ to the vertices in it's $i$ through $j$ spots (preserving the ordering),  yielding a subsequence of length $j-i+1$. 
\end{definition}
 Note that this tensor product is reminiscent of, but different from  the cup product\footnote{
The tensor product $\otimes : C^k(\X) \times C^l(\X) \to C^{k+l+1}(\X)$ and  cup product $\smile : C^k(\X) \times C^l(\X) \to C^{k+l}(\X)$ differ in the dimension of the resulting cochain. Recall that $u \smile v \in C^{{k+l}}(\X)$ takes value on  a sequence $\tau \in \X_{{k+l}}$ as follows:
$$(u \smile v)(\tau)=u(\tau \vert_{[0 : k]}) \cdot v(\tau \vert_{[{k} : k+l]})$$
}.
Using the tensor product on the space of cochains, we introduce a new basis for $C^n(\X)$.

\begin{definition}\label{def:eig basis}
Fix $a \in \X_0$ and $w:\X \to (0,1]$ a weight function. We define the associated  map $$f_0= f_0^{a,w} : \X_0 \to C^0(\X)$$ associating to all $x \in \X_0$,  a $0$-cochain defined over each $y \in \X_0$ as:
\begin{alignat*}{2}
f_0(x) : \X_0 &\to \R\\
y &\mapsto \big( f_0(x) \big) (y)&&= \1_a(x) + \Big(1- \1_a(x)\Big) \cdot \Big(w(x) \1_a(y)-w(a) \1_x(y)\Big)\\
& &&=
\begin{cases}
1 \hspace{11.5mm} \text{ if } x=a\\
w(x) \hspace{6mm} \text{ if } y=a \neq x\\
-w(a) \hspace{3.5mm}  \text{ if } y=x \neq a\\
0  \hspace{12mm} \text{ if not}
\end{cases}
\end{alignat*}

\medskip
We further define the map $$f= f^{a,w} : \X \to C^*(\X)$$
which, for all $n\in \ZZ_{\geq -1}$, maps any $n$-dimensional sequence, $\eta \in \X_n$,  to an $n$-cochain:
$$f(\eta) = f_0(\eta_1) \otimes \dots \otimes f_0(\eta_n) \text{      } \in C^n(\X)$$
Here we denote by $\eta_i \in \X_0$ the $i$'th vertex of the sequence $\eta$.
\end{definition}

We give an example of a particular cochain $f(\eta)\in C^6(\X)$ applied to two different sequences $\sigma,\tau \in \X_6$ in Figure \ref{ex:eig}.\\

\begin{figure}[ht]
\centering
\begin{subfigure}[b]{0.45\textwidth}
\begin{tikzpicture}[
 add paren/.style={
      left delimiter={(},
      right delimiter={)}, 
    }]

\matrix(m) [column sep=7mm,   box/.style={
    shape=rectangle,
    text width=0.6cm,
    minimum height=0.7cm,
    text centered,
    draw=black,
    outer sep=0.1cm,
    font=\large,
    font=\bf
},  
boxb/.style={
    shape=rectangle,
    text width=0.6cm,
    minimum height=0.7cm,
    text centered,
    draw=black,
    fill=lblue,
    outer sep=0.1cm,
    font=\large,
    font=\bf
}, 
boxh/.style={
    shape=rectangle,
    text width=0.6cm,
    minimum height=0.7cm,
    text centered,
    draw=black,
   pattern={mylines[angle=45,line width=2.5pt]}, 
   pattern color=lgray,
    outer sep=0.1cm,
    font=\large,
    font=\bf
}, 
float/.style={
    shape=rectangle,
    text width=1.2cm,
    minimum height=0.7cm,
    text centered,
    outer sep=0.1cm,
     font=\large
},
empty/.style={
    shape=rectangle,
    text width=0cm,
    minimum height=0.7cm,
    text centered,
    outer sep=0cm,
     font=\large
},] 
{
&
 \node[float] (z1) {$\eta$}; &
 \node[float] (z2) {$\sigma$};&
 \node[float] (z3) { };
  \\
  &
  \node[boxh] (a1) {$\eta_0$};&
   \node[box] (a2) {\textcolor{myGreen}{$\eta_0$}}; &
    \node[float] (a3) {$-w(a)$};\\
 &
  \node[boxh] (b1) {$\eta_1$};&
   \node[box] (b2) {\textcolor{BlueViolet}{$a$}}; &
    \node[float] (b3) {$w(\eta_1)$};\\
  &
\node[boxh] (c1) {$\eta_2$};&
   \node[box] (c2) {\textcolor{myGreen}{$\eta_2$}}; &
    \node[float] (c3) {$-w(a)$};\\
   \node[empty] (d0) { };&   
\node[boxh] (d1) {$\eta_3$};&
   \node[box] (d2) {\textcolor{myGreen}{$\eta_3$}}; &
    \node[float] (d3) {$-w(a)$};
     \node[empty] (d4) { };&   \\
          &
\node[boxh] (e1) {$\eta_4$};&
   \node[box] (e2) {\textcolor{myGreen}{$\eta_4$}}; &
    \node[float] (e3) {$-w(a)$};\\
          &
\node[boxb] (f1) {$a$};&
   \node[box] (f2) {\textcolor{myBlue}{$a$}}; &
    \node[float] (f3) {$1$};\\
              &
\node[boxb] (g1) {$a$};&
   \node[box] (g2) {\textcolor{myBlue}{$\sigma_6$}}; &
    \node[float] (g3) {$1$};\\
    &&&
     \node[float] (h3) {$\shortparallel$};\\
      &&&
     \node[empty] (i3) {};\\
};
\node[fit=(a1) (g1), add paren] (matrix){};
\node (label) at ($(d0)!0.2!(d1)$) {{\Large $f$}};
\node (label) at ($(d1)!0.6!(d2)$) {:};
\draw[thick,shorten >=0.1cm,shorten <=0.2cm, |->] (d2)--(d3);
\node (label) at ($(d3)!0.6!(d4)$) {\hspace{35mm}\large{,}};
\node (label) at ($(z3)!0.3!(a3)$) {$\rotatebox{270}{\Bigg(}$};
\node (label) at ($(a3)!0.5!(b3)$) {\Large{$\cdot$}};
\node (label) at ($(b3)!0.5!(c3)$) {\Large{$\cdot$}};
\node (label) at ($(c3)!0.5!(d3)$) {\Large{$\cdot$}};
\node (label) at ($(d3)!0.5!(e3)$) {\Large{$\cdot$}};
\node (label) at ($(e3)!0.5!(f3)$) {\Large{$\cdot$}};
\node (label) at ($(f3)!0.5!(g3)$) {\Large{$\cdot$}};
\node (label) at ($(g3)!0.5!(h3)$) {$\rotatebox{270}{\Bigg)}$};
\node (label) at ($(h3)!0.7!(i3)$) {{\large $w(a)^4 \cdot w(\eta_1)$}};
\end{tikzpicture}
\end{subfigure}
   \hfill
  \begin{subfigure}[b]{0.45\textwidth}
\begin{tikzpicture}[
  add paren/.style={
      left delimiter={(},
      right delimiter={)}, 
    }]

\matrix(m) [column sep=7mm,   box/.style={
    shape=rectangle,
    text width=0.6cm,
    minimum height=0.7cm,
    text centered,
    draw=black,
    outer sep=0.1cm,
    font=\large,
    font=\bf
},  
boxb/.style={
    shape=rectangle,
    text width=0.6cm,
    minimum height=0.7cm,
    text centered,
    draw=black,
    fill=lblue,
    outer sep=0.1cm,
    font=\large,
    font=\bf
}, 
boxh/.style={
    shape=rectangle,
    text width=0.6cm,
    minimum height=0.7cm,
    text centered,
    draw=black,
   pattern={mylines[angle=45,line width=2.5pt]}, 
   pattern color=lgray,
    outer sep=0.1cm,
    font=\large,
    font=\bf
}, 
float/.style={
    shape=rectangle,
    text width=1.2cm,
    minimum height=0.7cm,
    text centered,
    outer sep=0.1cm,
     font=\large
},
empty/.style={
    shape=rectangle,
    text width=0cm,
    minimum height=0.7cm,
    text centered,
    outer sep=0cm,
     font=\large
},] 
{
&
 \node[float] (z1) {$\eta$}; &
 \node[float] (z2) {$\tau$};&
 \node[float] (z3) { };
  \\
  &
  \node[boxh] (a1) {$\eta_0$};&
   \node[box] (a2) {\textcolor{myGreen}{$\eta_0$}}; &
    \node[float] (a3) {$-w(a)$};\\
 &
  \node[boxh] (b1) {$\eta_1$};&
   \node[box] (b2) {\textcolor{myGreen}{$\eta_1$}}; &
    \node[float] (b3) {$-w(a)$};\\
  &
\node[boxh] (c1) {$\eta_2$};&
   \node[box] (c2) {\textcolor{Maroon}{$\tau_2$}}; &
    \node[float] (c3) {$0$};\\
   \node[empty] (d0) { };&   
\node[boxh] (d1) {$\eta_3$};&
   \node[box] (d2) {\textcolor{BlueViolet}{$a$}}; &
    \node[float] (d3) {$w(\eta_2)$};\\
          &
\node[boxh] (e1) {$\eta_4$};&
   \node[box] (e2) {\textcolor{myGreen}{$\eta_4$}}; &
    \node[float] (e3) {$-w(a)$};\\
          &
\node[boxb] (f1) {$a$};&
   \node[box] (f2) {\textcolor{myBlue}{$\tau_5$}}; &
    \node[float] (f3) {$1$};\\
              &
\node[boxb] (g1) {$a$};&
   \node[box] (g2) {\textcolor{myBlue}{$\tau_6$}}; &
    \node[float] (g3) {$1$};\\
    &&&
     \node[float] (h3) {$\shortparallel$};\\
      &&&
     \node[empty] (i3) {};\\
};
\node[fit=(a1) (g1), add paren] (matrix){};
\node (label) at ($(d0)!0.2!(d1)$) {{\Large $f$}};
\node (label) at ($(d1)!0.6!(d2)$) {:};
\draw[thick,shorten >=0.1cm,shorten <=0.2cm, |->] (d2)--(d3);
\node (label) at ($(z3)!0.3!(a3)$) {$\rotatebox{270}{\Bigg(}$};
\node (label) at ($(a3)!0.5!(b3)$) {\Large{$\cdot$}};
\node (label) at ($(b3)!0.5!(c3)$) {\Large{$\cdot$}};
\node (label) at ($(c3)!0.5!(d3)$) {\Large{$\cdot$}};
\node (label) at ($(d3)!0.5!(e3)$) {\Large{$\cdot$}};
\node (label) at ($(e3)!0.5!(f3)$) {\Large{$\cdot$}};
\node (label) at ($(f3)!0.5!(g3)$) {\Large{$\cdot$}};
\node (label) at ($(g3)!0.5!(h3)$) {$\rotatebox{270}{\Bigg)}$};
\node (label) at ($(h3)!0.7!(i3)$) {{\large $0$}};
\end{tikzpicture}
\end{subfigure}

\caption{Example cochain $f(\eta) \in C^6(\X)$,  
$\eta=(\eta_0,\eta_1,\eta_2,\eta_3,\eta_4,a,a)\in \X_6$ 
applied to two different sequences  $\sigma=(\eta_0,a,\eta_2,\eta_3,\eta_4,a,\sigma_6)\in \X_6$,  and $\tau=(\eta_0,\eta_1,\tau_2,a,\eta_4,\tau_5,\tau_6)\in \X_6$, with $\tau_2 \neq \eta_2$ (in red). We observe $\left(f(\eta)\right)(\sigma) \neq 0$ whereas $\left(f(\eta)\right)(\tau) = 0$. }\label{ex:eig}
\end{figure}

Of interest to us, is that the cochains  $\{ f(\eta) \}_{\eta \in \X_n }$ form a basis for $C^n(\X)$, the proof is quite simple, we give it in the following lemma.\\

\begin{lemma} Given a sequence complex $\X$, and any $n \in \ZZ_{\geq 0}$,
the cochains $\{ f(\eta) \}_{\eta \in \X_n }$ form a basis for $C^n(\X)$.
\end{lemma}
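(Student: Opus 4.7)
Since the paper restricts attention to the full sequence complex, $|\X_n|=|V|^{n+1}=\dim C^n(\X)$, so the set $\{f(\eta)\}_{\eta\in\X_n}$ has the right cardinality and it suffices to prove linear independence. The approach is to reduce everything to the case $n=0$ via the tensor structure, and then verify the $n=0$ case by a direct computation.

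\textbf{Step 1 (tensor factorisation).} First I would observe that for the full sequence complex, the tensor product of Definition given just above the lemma identifies $C^n(\X)$ with the $(n+1)$-fold tensor power $C^0(\X)^{\otimes(n+1)}$ via the rule $e_{(\sigma_0,\dots,\sigma_n)}\longleftrightarrow e_{\sigma_0}\otimes\cdots\otimes e_{\sigma_n}$. This is a quick check: $(e_{\sigma_0}\otimes\cdots\otimes e_{\sigma_n})(\rho)$ equals $1$ exactly when $\rho_i=\sigma_i$ for every $i$, matching $e_{(\sigma_0,\dots,\sigma_n)}(\rho)$. Under this identification $f(\eta)=f_0(\eta_0)\otimes\cdots\otimes f_0(\eta_n)$ is literally a pure tensor in $C^0(\X)^{\otimes(n+1)}$.

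\textbf{Step 2 (reduction to $n=0$).} Using the standard fact that tensor products of bases give bases of tensor products, it is enough to prove that $\{f_0(x)\}_{x\in V}$ is a basis of $C^0(\X)$. Since $|V|=\dim C^0(\X)$, I only need to verify linear independence.

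\textbf{Step 3 (the $n=0$ case by direct computation).} From the case analysis in Definition \ref{def:eig basis} one reads off
\[
f_0(a)=\sum_{y\in V}e_y,\qquad f_0(x)=w(x)e_a-w(a)e_x\ \text{ for } x\neq a.
\]
Suppose $\alpha\, f_0(a)+\sum_{x\neq a}\beta_x f_0(x)=0$. Evaluating at any $y\neq a$ gives $\alpha-\beta_y w(a)=0$, so $\beta_x=\alpha/w(a)$ for every $x\neq a$. Evaluating at $a$ then gives $\alpha+\sum_{x\neq a}\beta_x w(x)=\alpha\bigl(1+w(a)^{-1}\sum_{x\neq a}w(x)\bigr)=\alpha\cdot\frac{\sum_{x\in V}w(x)}{w(a)}=0$, so $\alpha=0$ (since $w$ is strictly positive) and hence all $\beta_x=0$. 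This establishes linear independence.

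\textbf{Main obstacle.} There is no real obstacle; the argument is essentially formal once the tensor factorisation of $C^n(\X)$ is recognised. The only small point requiring care is the verification that the identification $C^n(\X)\cong C^0(\X)^{\otimes(n+1)}$ is compatible with the nonstandard tensor product introduced in the paper (whose output lies in dimension $k+l+1$ rather than $k+l$), which relies crucially on $\X$ being the \emph{full} sequence complex so that every prefix/suffix of a length-$(n+1)$ sequence is again in $\X$.
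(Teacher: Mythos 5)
Your proposal is correct and takes essentially the same approach as the paper: both arguments rest on the tensor-product structure of $C^n(\X)$ for the full sequence complex (the paper runs this as an induction on $n$ using $\Span\{f(\eta)\}=\Span\{f_0(x)\}\otimes\Span\{f(\tau)\}$, you state the identification $C^n(\X)\cong C^0(\X)^{\otimes(n+1)}$ directly), and both settle the $n=0$ case by an elementary computation with $f_0(a)=\sum_{y}e_{(y)}$ and $f_0(x)=w(x)e_{(a)}-w(a)e_{(x)}$. The only cosmetic difference is that you check linear independence and conclude via the dimension count $|\X_n|=|V|^{n+1}$, whereas the paper exhibits each $e_\sigma$ explicitly as a combination of the $f(\eta)$; your remark that fullness of $\X$ is what makes the tensor-power identification valid is a welcome precision.
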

\begin{proof}
We will show $\Span \big\{  f(\eta) \mid \eta \in  \X_n  \big\} \ni e_\sigma$, for all $ \sigma \in \X_n$, $n \in \ZZ_{\geq 0}$.  We do the proof by induction over $n$.\\

Consider n=0, then $\X_0= V = \{a,b,c,...\}$. By definition we have: 
\begin{align*}
f({(a)})&=
\big(f_0(a)\big)(a)\cdot e_{(a)} 
+ \big(f_0(a)\big)(b) \cdot e_{(b)}
+ \big(f_0(a)\big)(c) \cdot e_{(c)}
+\dots 
=e_{(a)} + e_{(b)} + e_{(c)} + \dots\\
f({(b)})&=
\big(f_0(b)\big)(a)\cdot e_{(a)} 
+ \big(f_0(b)\big)(b) \cdot e_{(b)}
+ \big(f_0(b)\big)(c) \cdot e_{(c)}
+\dots 
=w(b) \cdot e_{(a)}  -w(a) \cdot e_{(b)} \\
f({(c)})&=
\big(f_0(c)\big)(a)\cdot e_{(a)} 
+ \big(f_0(c)\big)(b) \cdot e_{(b)}
+ \big(f_0(c)\big)(c) \cdot e_{(c)}
+\dots 
=w(c) \cdot e_{(a)}  -w(a) \cdot e_{(c)} \\
\vdots&
\end{align*}

\medskip
As we have $w(v)\neq0$ for all $v \in V$ ,  we may take the following linear combination of  cochains:
\begin{align*}
f({(a)}) +\sum_{x \neq a} w(a)^{-1} \cdot f({(x)}) 
= \sum_{x \in A} e_{(x)} + \sum_{x \neq a} \Big( \dfrac{w(x)}{w(a)} \cdot e_{(a)} -e_{(x)} \Big)
= \Bigg( 1+ \frac{1-w(a)}{w(a)} \Bigg) \cdot e_{(a)}
=w(a)^{-1} \cdot e_{(a)}
\end{align*}
Notice that in the second equality, we made use of $\sum_{x \in V}w(x)=1$. Proving $e_{(a)} \in \Span \big\{  f(\eta) \mid \eta \in  \X_n  \big\}$.\\

On the other hand, by definition, we may write $f({(y)})$ for any $y \in V$, $y \neq a$ as:
$$ f({(y)}) = w(y) \cdot e_{(a)} - w(a) \cdot e_{(y)} $$
Solving for $e_{(y)}$ and plugging in our identity for $e_{(a)}$, we get 
\begin{align*}
 e_{(y)}
=-w(a)^{-1} \cdot \big(f({(y)})- w(y) \cdot e_{(a)} \big) 
=-w(a)^{-1} \cdot f({(y)})+  w(y) \cdot \Bigg( f({(a)}) +\sum_{x \neq a} w(a)^{-1} \cdot f({(x)}) \Bigg)
\end{align*}

Proving for all $y \in X_0$, $e_{(y)} \in  \Span \big\{  f(\eta) \mid \eta \in  \X_0 \big\}$,  concluding the base induction case.\\

Consider $\eta \in \X_n$ and denote $\eta_0 \in \X_0$ it's first vertex and $\eta \mid_{[1,n]} \in \X_{n-1}$ it's restriction to it's other $n$ vertices.  By definition,  $f(\eta)= f({\eta_0}) \otimes f({\eta \mid_{[1,n]}})$, allowing us to immediately apply our induction hypothesis:
\begin{align*}
\Span\{  f(\eta) \mid \eta \in  \X_n+1  \}
&= \Span\{  f({(x)}) \mid x \in  \X_0   \} \otimes \Span\{  f(\tau) \mid \tau \in  \X_n  \}\\
&= \Span\{  e_{(x)} \mid x \in  \X_0  \} \otimes \Span\{   e_\tau \mid \tau \in  \X_n  \}
= \Span\{  e_\eta \mid \eta \in  \X_{n+1}   \}\\
\end{align*}
proving the lemma.
\end{proof}

Having proven the cochains $\{ f(\eta) : \eta \in \X_n \}$ form a basis for $C^n(\X)$, we can finally detail the proof for Theorem \ref{bigThm}, presented in abridged form in the previous section.  We give here the full version of the theorem, which exposes the eigenspaces associated to each eigenvalue.\\

\begin{reptheorem}{bigThm}
The Laplacian of a weighted full sequence complex $(\X,w)$, over vertices $\X_0=V$, with weights following the independent vertices model (Definition \ref{def:ind seq}):
\begin{align*}
L_n : \hspace{1mm} &C^n(\X) \to C^n(X) \hspace{5mm} \text{for } n\in \mathbb{Z}_{\geq 0}\\
\text{admits  eigenvalues} \hspace{50mm}&\\
\lambda= \hspace{1mm}  &1,...,n+2\\
\text{with multiplicity:} \hspace{53mm}&\\
\text{mult}(\lambda)= \hspace{1mm}  &\binom{n+1}{\lambda-1} \cdot (|V|-1)^{\lambda-1} \hspace{50mm}
\end{align*}
By fixing one of the vertices,  $ a\in V$ , we can succinctly describe the eigenspaces,  associated over the usual basis $\{ e_\sigma \}$ ,  to each of our  eigenvalues:
\begin{align*}
E(\lambda,L_n)&=\Span \big\{  f(\eta) \mid \eta \in  \X_n : \# \{a \in \eta \} =n+2 - \lambda \big\} \hspace{5mm} \text{,  }  \lambda=1,...,n+2
\end{align*}
Where $f(\eta)$ is as in Definition \ref{def:eig basis}, and $\#\{a\in \eta\}$ denotes the number of vertex ``$a$" in the sequence $\eta$.
\end{reptheorem}

\begin{proof}
Let us show the cochains $f(\eta)$ are eigenvectors for the Laplacian associated to a independent vertices model sequence complex under the usual representation.  From corollary \ref{cor:IndLaplacian}, we know the Laplacian $L_n$, for all $n \in \ZZ \geq -1$, has representation, over the usual basis $\{ e_\sigma \}$, given by:

\begin{align*}
(L_n)_{\tau \sigma} =
\begin{cases}
(n+2) -\sum_{i=0}^n w(\sigma_i) \hspace{3mm} &\text{if } \tau=\sigma\\
w(\sigma_j)  &\text{if } \tau \bowtie \sigma \text{ ,  } \tau_j \neq \sigma_j\\ 
0 &\text{if } \tau \not\bowtie \sigma
\end{cases}
\end{align*}

\noindent for any $\sigma, \tau \in \X_n$. We recall the use here of the ``swapped" sequences notation $\tau \bowtie \sigma$ from Definition\ref{defs}.\\

Hence to show  $v \in C^n(\X)$ is an eigenvector of $L_n$ associated to $\lambda$,  we need to prove:
\begin{align}\label{eigvec}
\lambda \cdot v(\tau) =\left[ (n+2) -\sum_{i=0}^n w(\tau_i) \right] \cdot v(\tau)
- \sum_{\substack{\sigma :\sigma \bowtie \tau,\\ \tau_j \neq \sigma_j}} w(\sigma_j) \cdot v({\sigma}) \hspace{3mm} \text{ ,  } \forall \tau \in \X_n
\end{align}

We will separate the proof into two parts. First we will deal with the cochains of the form $f\big({(a, \dots , a)}\big) \in C^n(\X)$, for some $n \in \ZZ_{\geq0}$. We illustrate such a cochain in Figure \ref{ex:eig all a}.\\

\begin{figure}[ht]
\centering
\begin{tikzpicture}[
 add paren/.style={
      left delimiter={(},
      right delimiter={)}, 
    }]

\matrix(m) [column sep=0.5mm,   box/.style={
    shape=rectangle,
    text width=0.3cm,
    minimum height=0.6cm,
    text centered,
    draw=black,
    font=\large,
    font=\bf
},  
boxb/.style={
    shape=rectangle,
    text width=0.3cm,
    minimum height=0.6cm,
    text centered,
    draw=black,
    fill=lblue,
    font=\large,
    font=\bf
}, 
boxh/.style={
    shape=rectangle,
    text width=0.3cm,
    minimum height=0.6cm,
    text centered,
    draw=black,
   pattern={mylines[angle=45,line width=2.5pt]}, 
   pattern color=lgray,
    font=\large,
    font=\bf
}, 
float/.style={
    shape=rectangle,
    text width=1.2cm,
    minimum height=0.6cm,
    text centered,
    outer sep=0.1cm,
},
empty/.style={
    shape=rectangle,
    text width=0cm,
    minimum height=0.6cm,
},] 
{
&
 \node[float] (z1) {$\eta$}; &
 \node[float] (z2) {$\sigma$};&
 \node[float] (z3) { };
  \\
  &
  \node[boxb] (a1) {$a$};&
   \node[box] (a2) {\textcolor{myBlue}{$\sigma_0$}}; &
    \node[float] (a3) {$1$};\\
 &
  \node[boxb] (b1) {$a$};&
   \node[box] (b2) {\textcolor{myBlue}{$\sigma_1$}}; &
    \node[float] (b3) {$1$};\\
   \node[empty] (c0) { };&  
\node[boxb] (c1) {$a$};&
   \node[box] (c2) {\textcolor{myBlue}{$\sigma_2$}}; &
    \node[float] (c3) {$1$};&
       \node[empty] (c4) { }; \\
          &
\node[boxb] (d1) {$a$};&
   \node[box] (d2) {\textcolor{myBlue}{$\sigma_3$}}; &
    \node[float] (d3) {$1$};\\
              &
\node[boxb] (e1) {$a$};&
   \node[box] (e2) {\textcolor{myBlue}{$\sigma_4$}}; &
    \node[float] (e3) {$1$};\\
    &&&
     \node[float] (f3) {$\shortparallel$};\\
      &&&
     \node[empty] (g3) {};\\
};
\node[fit=(a1) (e1), add paren] (matrix){};
\node (label) at ($(c0)!0!(c1)$) {{\Large $f$}};
\node (label) at ($(c1)!0.6!(c2)$) {:};
\draw[thick,shorten >=0cm,shorten <=0.1cm, |->] (c2)--(c3);

\node (label) at ($(z3)!0.3!(a3)$) {$\rotatebox{270}{\Bigg(}$};
\node (label) at ($(a3)!0.5!(b3)$) {\Large{$\cdot$}};
\node (label) at ($(b3)!0.6!(c3)$) {\Large{$\cdot$}};
\node (label) at ($(c3)!0.5!(d3)$) {\Large{$\cdot$}};
\node (label) at ($(d3)!0.5!(e3)$) {\Large{$\cdot$}};
\node (label) at ($(e3)!0.5!(f3)$) {$\rotatebox{270}{\Bigg)}$};
\node (label) at ($(f3)!1!(g3)$) {{$1$}};
\end{tikzpicture}

\caption{Example of a cochain associated to a sequence of all  $a$'s. Figured here $\eta=(a,a,a,a,a)\in \X_4$ is applied to a sequence $\sigma=(\sigma_0,\sigma_1,\sigma_2,\sigma_3,\sigma_4) \in \X_4$ and yields $\big( f(\eta)\big) (\sigma)=1$. }\label{ex:eig all a}
\end{figure}

\newpage
After showing the cochain $f({(a, \dots , a)}) \in C^n(\X)$ is an eigenvector, we will show all other cochains $f(\eta) \in C^n(\X)$, $\eta \neq (a, \dots , a)$, are eigenvectors for the Laplacian $L_n$ as well. In this general step, we distinguish the action of the cochain $f(\eta)$ on $\tau\in\X_n$ such that $\big( f(\eta) \big) (\tau)=0$ and on $\sigma\in\X_n$ such that $\big( f(\eta) \big) (\sigma) \neq 0$. In both cases, we explore what the action of $f(\eta)$ would be on any  $\tau' \bowtie \tau$ and $\sigma' \bowtie \sigma$. We illustrate some examples for the first case in Figure \ref{ex:eig zero} and for the second case in Figure \ref{ex:eig nonzero}.\\

\begin{figure}[ht]
\centering
\begin{subfigure}[b]{0.3\textwidth}
\begin{tikzpicture}[
 add paren/.style={
      left delimiter={(},
      right delimiter={)}, 
    }]

\matrix(m) [column sep=0.5mm,   box/.style={
    shape=rectangle,
    text width=0.3cm,
    minimum height=0.6cm,
    text centered,
    draw=black,
    font=\large,
    font=\bf
},  
boxb/.style={
    shape=rectangle,
    text width=0.3cm,
    minimum height=0.6cm,
    text centered,
    draw=black,
    fill=lblue,
    font=\large,
    font=\bf
}, 
boxh/.style={
    shape=rectangle,
    text width=0.3cm,
    minimum height=0.6cm,
    text centered,
    draw=black,
   pattern={mylines[angle=45,line width=2.5pt]}, 
   pattern color=lgray,
    font=\large,
    font=\bf
}, 
float/.style={
    shape=rectangle,
    text width=1.2cm,
    minimum height=0.6cm,
    text centered,
    outer sep=0.1cm,
},
empty/.style={
    shape=rectangle,
    text width=0cm,
    minimum height=0.6cm,
},] 
{
&
 \node[float] (z1) {$\eta$}; &
 \node[float] (z2) {$\tau$};&
 \node[float] (z3) { };
  \\
  &
  \node[boxh] (a1) {$\eta_0$};&
   \node[box] (a2) {\textcolor{myGreen}{$\eta_0$}}; &
    \node[float] (a3) {$-w(a)$};\\
 &
  \node[boxh] (b1) {$\eta_1$};&
   \node[box] (b2) {\textcolor{BlueViolet}{$a$}}; &
    \node[float] (b3) {$w(\eta_1)$};\\
   \node[empty] (c0) { };&  
\node[boxh] (c1) {$\eta_2$};&
   \node[box] (c2) {\textcolor{Maroon}{$\tau_2$}}; &
    \node[float] (c3) {$0$};&
       \node[empty] (c4) { }; \\
          &
\node[boxb] (d1) {$a$};&
   \node[box] (d2) {\textcolor{myBlue}{$\tau_3$}}; &
    \node[float] (d3) {$1$};\\
              &
\node[boxb] (e1) {$a$};&
   \node[box] (e2) {\textcolor{myBlue}{$\tau_4$}}; &
    \node[float] (e3) {$1$};\\
    &&&
     \node[float] (f3) {$\shortparallel$};\\
      &&&
     \node[empty] (g3) {};\\
};
\node[fit=(a1) (e1), add paren] (matrix){};
\node (label) at ($(c0)!0!(c1)$) {{\Large $f$}};
\node (label) at ($(c1)!0.6!(c2)$) {:};
\draw[thick,shorten >=0cm,shorten <=0.1cm, |->] (c2)--(c3);

\node (label) at ($(z3)!0.3!(a3)$) {$\rotatebox{270}{\Bigg(}$};
\node (label) at ($(a3)!0.5!(b3)$) {\Large{$\cdot$}};
\node (label) at ($(b3)!0.6!(c3)$) {\Large{$\cdot$}};
\node (label) at ($(c3)!0.5!(d3)$) {\Large{$\cdot$}};
\node (label) at ($(d3)!0.5!(e3)$) {\Large{$\cdot$}};
\node (label) at ($(e3)!0.5!(f3)$) {$\rotatebox{270}{\Bigg)}$};
\node (label) at ($(f3)!1!(g3)$) {{$0$}};
\end{tikzpicture}
\end{subfigure}

\begin{subfigure}[b]{0.3\textwidth}
\begin{tikzpicture}[
 add paren/.style={
      left delimiter={(},
      right delimiter={)}, 
    }]

\matrix(m) [column sep=0.5mm,   box/.style={
    shape=rectangle,
    text width=0.3cm,
    minimum height=0.6cm,
    text centered,
    draw=black,
    font=\large,
    font=\bf
},  
boxb/.style={
    shape=rectangle,
    text width=0.3cm,
    minimum height=0.6cm,
    text centered,
    draw=black,
    fill=lblue,
    font=\large,
    font=\bf
}, 
boxy/.style={
    shape=rectangle,
    text width=0.3cm,
    minimum height=0.6cm,
    text centered,
    draw=black,
    fill=lyellow,
    font=\large,
    font=\bf
}, 
boxh/.style={
    shape=rectangle,
    text width=0.3cm,
    minimum height=0.6cm,
    text centered,
    draw=black,
   pattern={mylines[angle=45,line width=2.5pt]}, 
   pattern color=lgray,
    font=\large,
    font=\bf
}, 
float/.style={
    shape=rectangle,
    text width=1.2cm,
    minimum height=0.6cm,
    text centered,
    outer sep=0.1cm,
},
empty/.style={
    shape=rectangle,
    text width=0cm,
    minimum height=0.6cm,
},] 
{
&
 \node[float] (z1) {$\eta$}; &
 \node[float] (z2) {$\tau'$};&
 \node[float] (z3) { };
  \\
  &
  \node[boxh] (a1) {$\eta_0$};&
   \node[boxy] (a2) {\textcolor{BlueViolet}{$a$}}; &
    \node[float] (a3) {$w(\eta_0)$};\\
 &
  \node[boxh] (b1) {$\eta_1$};&
   \node[box] (b2) {\textcolor{BlueViolet}{$a$}}; &
    \node[float] (b3) {$w(\eta_1)$};\\
   \node[empty] (c0) { };&  
\node[boxh] (c1) {$\eta_2$};&
   \node[box] (c2) {\textcolor{Maroon}{$\tau_2$}}; &
    \node[float] (c3) {$0$};&
       \node[empty] (c4) { }; \\
          &
\node[boxb] (d1) {$a$};&
   \node[box] (d2) {\textcolor{myBlue}{$\tau_3$}}; &
    \node[float] (d3) {$1$};\\
              &
\node[boxb] (e1) {$a$};&
   \node[box] (e2) {\textcolor{myBlue}{$\tau_4$}}; &
    \node[float] (e3) {$1$};\\
    &&&
     \node[float] (f3) {$\shortparallel$};\\
      &&&
     \node[empty] (g3) {};\\
};
\node[fit=(a1) (e1), add paren] (matrix){};
\node (label) at ($(c0)!0!(c1)$) {{\Large $f$}};
\node (label) at ($(c1)!0.6!(c2)$) {:};
\draw[thick,shorten >=0cm,shorten <=0.1cm, |->] (c2)--(c3);
\node (label) at ($(c3)!0.6!(c4)$) {\hspace{5mm}\large{,}};

\node (label) at ($(z3)!0.3!(a3)$) {$\rotatebox{270}{\Bigg(}$};
\node (label) at ($(a3)!0.5!(b3)$) {\Large{$\cdot$}};
\node (label) at ($(b3)!0.6!(c3)$) {\Large{$\cdot$}};
\node (label) at ($(c3)!0.5!(d3)$) {\Large{$\cdot$}};
\node (label) at ($(d3)!0.5!(e3)$) {\Large{$\cdot$}};
\node (label) at ($(e3)!0.5!(f3)$) {$\rotatebox{270}{\Bigg)}$};
\node (label) at ($(f3)!1!(g3)$) {{$0$}};
\end{tikzpicture}
\end{subfigure}
   \hfill
  \begin{subfigure}[b]{0.3\textwidth}
\begin{tikzpicture}[
 add paren/.style={
      left delimiter={(},
      right delimiter={)}, 
    }]

\matrix(m) [column sep=0.5mm,   box/.style={
    shape=rectangle,
    text width=0.3cm,
    minimum height=0.6cm,
    text centered,
    draw=black,
    font=\large,
    font=\bf
},  
boxb/.style={
    shape=rectangle,
    text width=0.3cm,
    minimum height=0.6cm,
    text centered,
    draw=black,
    fill=lblue,
    font=\large,
    font=\bf
}, 
boxy/.style={
    shape=rectangle,
    text width=0.3cm,
    minimum height=0.6cm,
    text centered,
    draw=black,
    fill=lyellow,
    font=\large,
    font=\bf
}, 
boxh/.style={
    shape=rectangle,
    text width=0.3cm,
    minimum height=0.6cm,
    text centered,
    draw=black,
   pattern={mylines[angle=45,line width=2.5pt]}, 
   pattern color=lgray,
    font=\large,
    font=\bf
}, 
float/.style={
    shape=rectangle,
    text width=1.2cm,
    minimum height=0.6cm,
    text centered,
    outer sep=0.1cm,
},
empty/.style={
    shape=rectangle,
    text width=0cm,
    minimum height=0.6cm,
},] 
{
&
 \node[float] (z1) {$\eta$}; &
 \node[float] (z2) {$\tau''$};&
 \node[float] (z3) { };
  \\
  &
  \node[boxh] (a1) {$\eta_0$};&
   \node[box] (a2) {\textcolor{myGreen}{$\eta_0$}}; &
    \node[float] (a3) {$-w(a)$};\\
 &
  \node[boxh] (b1) {$\eta_1$};&
   \node[box] (b2) {\textcolor{BlueViolet}{$a$}}; &
    \node[float] (b3) {$w(\eta_1)$};\\
   \node[empty] (c0) { };&  
\node[boxh] (c1) {$\eta_2$};&
   \node[boxy] (c2) {\textcolor{BlueViolet}{$a$}}; &
    \node[float] (c3) {$w(\eta_2)$};&
       \node[empty] (c4) { }; \\
          &
\node[boxb] (d1) {$a$};&
   \node[box] (d2) {\textcolor{myBlue}{$\tau_3$}}; &
    \node[float] (d3) {$1$};\\
              &
\node[boxb] (e1) {$a$};&
   \node[box] (e2) {\textcolor{myBlue}{$\tau_4$}}; &
    \node[float] (e3) {$1$};\\
    &&&
     \node[float] (f3) {$\shortparallel$};\\
      &&&
     \node[empty] (g3) {};\\
};
\node[fit=(a1) (e1), add paren] (matrix){};
\node (label) at ($(c0)!0!(c1)$) {{\Large $f$}};
\node (label) at ($(c1)!0.6!(c2)$) {:};
\draw[thick,shorten >=0cm,shorten <=0.1cm, |->] (c2)--(c3);
\node (label) at ($(c3)!0.6!(c4)$) {\hspace{5mm}\large{,}};

\node (label) at ($(z3)!0.3!(a3)$) {$\rotatebox{270}{\Bigg(}$};
\node (label) at ($(a3)!0.5!(b3)$) {\Large{$\cdot$}};
\node (label) at ($(b3)!0.6!(c3)$) {\Large{$\cdot$}};
\node (label) at ($(c3)!0.5!(d3)$) {\Large{$\cdot$}};
\node (label) at ($(d3)!0.5!(e3)$) {\Large{$\cdot$}};
\node (label) at ($(e3)!0.5!(f3)$) {$\rotatebox{270}{\Bigg)}$};
\node (label) at ($(f3)!1!(g3)$) {{$-w(a) \cdot w(\eta_1)\cdot w(\eta_2)$}};
\end{tikzpicture}
\end{subfigure}
   \hfill
  \begin{subfigure}[b]{0.3\textwidth}
\begin{tikzpicture}[
 add paren/.style={
      left delimiter={(},
      right delimiter={)}, 
    }]

\matrix(m) [column sep=0.5mm,   box/.style={
    shape=rectangle,
    text width=0.3cm,
    minimum height=0.6cm,
    text centered,
    draw=black,
    font=\large,
    font=\bf
},  
boxb/.style={
    shape=rectangle,
    text width=0.3cm,
    minimum height=0.6cm,
    text centered,
    draw=black,
    fill=lblue,
    font=\large,
    font=\bf
}, 
boxy/.style={
    shape=rectangle,
    text width=0.3cm,
    minimum height=0.6cm,
    text centered,
    draw=black,
    fill=lyellow,
    font=\large,
    font=\bf
}, 
boxh/.style={
    shape=rectangle,
    text width=0.3cm,
    minimum height=0.6cm,
    text centered,
    draw=black,
   pattern={mylines[angle=45,line width=2.5pt]}, 
   pattern color=lgray,
    font=\large,
    font=\bf
}, 
float/.style={
    shape=rectangle,
    text width=1.2cm,
    minimum height=0.6cm,
    text centered,
    outer sep=0.1cm,
},
empty/.style={
    shape=rectangle,
    text width=0cm,
    minimum height=0.6cm,
},] 
{
&
 \node[float] (z1) {$\eta$}; &
 \node[float] (z2) {$\tau'''$};&
 \node[float] (z3) { };
  \\
  &
  \node[boxh] (a1) {$\eta_0$};&
   \node[box] (a2) {\textcolor{myGreen}{$\eta_0$}}; &
    \node[float] (a3) {$-w(a)$};\\
 &
  \node[boxh] (b1) {$\eta_1$};&
   \node[box] (b2) {\textcolor{BlueViolet}{$a$}}; &
    \node[float] (b3) {$w(\eta_1)$};\\
   \node[empty] (c0) { };&  
\node[boxh] (c1) {$\eta_2$};&
   \node[boxy] (c2) {\textcolor{myGreen}{$\eta_2$}}; &
    \node[float] (c3) {$-w(a)$};&
       \node[empty] (c4) { }; \\
          &
\node[boxb] (d1) {$a$};&
   \node[box] (d2) {\textcolor{myBlue}{$\tau_3$}}; &
    \node[float] (d3) {$1$};\\
              &
\node[boxb] (e1) {$a$};&
   \node[box] (e2) {\textcolor{myBlue}{$\tau_4$}}; &
    \node[float] (e3) {$1$};\\
    &&&
     \node[float] (f3) {$\shortparallel$};\\
      &&&
     \node[empty] (g3) {};\\
};
\node[fit=(a1) (e1), add paren] (matrix){};
\node (label) at ($(c0)!0!(c1)$) {{\Large $f$}};
\node (label) at ($(c1)!0.6!(c2)$) {:};
\draw[thick,shorten >=0cm,shorten <=0.1cm, |->] (c2)--(c3);

\node (label) at ($(z3)!0.3!(a3)$) {$\rotatebox{270}{\Bigg(}$};
\node (label) at ($(a3)!0.5!(b3)$) {\Large{$\cdot$}};
\node (label) at ($(b3)!0.6!(c3)$) {\Large{$\cdot$}};
\node (label) at ($(c3)!0.5!(d3)$) {\Large{$\cdot$}};
\node (label) at ($(d3)!0.5!(e3)$) {\Large{$\cdot$}};
\node (label) at ($(e3)!0.5!(f3)$) {$\rotatebox{270}{\Bigg)}$};
\node (label) at ($(f3)!1!(g3)$) {{$w(a)^2 \cdot w(\eta_1)$}};
\end{tikzpicture}
\end{subfigure}

\caption{Example of a cochain $f(\eta)$ applied to a sequence $\tau$ such that $\left( f(\eta) \right)(\tau)=0$, and of the same cochain $f(\eta)$ applied to sequences $\tau', \tau''$ and $\tau'''$ that have a single swapped vertex (highlighted in yellow) with $\tau$. Figured here, $\eta=(\eta_0,\eta_1,\eta_2,a,a)\in \X_4$ and $\tau=(\eta_0,\eta_1,\tau_2,\tau_3,\tau_4) \in \X_4$ with $\tau_2 \neq \eta_2$ (in red), such that $\big( f(\eta)\big) (\tau)=0$. In the second row we display the cochain $f(\eta)$ applied to $\tau',\tau'', \tau''' \bowtie \tau$: First, $\tau'$ such that $\tau'_2=\tau_2 \neq \eta_2$ (still in red), in this case it would also hold that $\big( f(\eta)\big) (\tau')=0$.  Then $\tau'' \& \tau'''$ such that $\tau_2'', \tau_2''' \neq \tau_2$ and specifically  $\tau_2'' = a$ and  $\tau_2''' = \eta_2$, in these cases we would have $\big( f(\eta)\big) (\tau''), \big( f(\eta)\big) (\tau''') \neq 0$.}\label{ex:eig zero}
\end{figure}

\begin{figure}[ht]
\centering
\begin{subfigure}[b]{0.3\textwidth}
\begin{tikzpicture}[
 add paren/.style={
      left delimiter={(},
      right delimiter={)}, 
    }]

\matrix(m) [column sep=0.5mm,   box/.style={
    shape=rectangle,
    text width=0.3cm,
    minimum height=0.6cm,
    text centered,
    draw=black,
    font=\large,
    font=\bf
},  
boxb/.style={
    shape=rectangle,
    text width=0.3cm,
    minimum height=0.6cm,
    text centered,
    draw=black,
    fill=lblue,
    font=\large,
    font=\bf
}, 
boxh/.style={
    shape=rectangle,
    text width=0.3cm,
    minimum height=0.6cm,
    text centered,
    draw=black,
   pattern={mylines[angle=45,line width=2.5pt]}, 
   pattern color=lgray,
    font=\large,
    font=\bf
}, 
float/.style={
    shape=rectangle,
    text width=1.2cm,
    minimum height=0.6cm,
    text centered,
    outer sep=0.1cm,
},
empty/.style={
    shape=rectangle,
    text width=0cm,
    minimum height=0.6cm,
},] 
{
&
 \node[float] (z1) {$\eta$}; &
 \node[float] (z2) {$\sigma$};&
 \node[float] (z3) { };
  \\
  &
  \node[boxh] (a1) {$\eta_0$};&
   \node[box] (a2) {\textcolor{myGreen}{$\eta_0$}}; &
    \node[float] (a3) {$-w(a)$};\\
 &
  \node[boxh] (b1) {$\eta_1$};&
   \node[box] (b2) {\textcolor{BlueViolet}{$a$}}; &
    \node[float] (b3) {$w(\eta_1)$};\\
   \node[empty] (c0) { };&  
\node[boxh] (c1) {$\eta_2$};&
   \node[box] (c2) {\textcolor{myGreen}{$\eta_2$}}; &
    \node[float] (c3) {$-w(a)$};&
       \node[empty] (c4) { }; \\
          &
\node[boxb] (d1) {$a$};&
   \node[box] (d2) {\textcolor{myBlue}{$\sigma_3$}}; &
    \node[float] (d3) {$1$};\\
              &
\node[boxb] (e1) {$a$};&
   \node[box] (e2) {\textcolor{myBlue}{$\sigma_4$}}; &
    \node[float] (e3) {$1$};\\
    &&&
     \node[float] (f3) {$\shortparallel$};\\
      &&&
     \node[empty] (g3) {};\\
};
\node[fit=(a1) (e1), add paren] (matrix){};
\node (label) at ($(c0)!0!(c1)$) {{\Large $f$}};
\node (label) at ($(c1)!0.6!(c2)$) {:};
\draw[thick,shorten >=0cm,shorten <=0.1cm, |->] (c2)--(c3);

\node (label) at ($(z3)!0.3!(a3)$) {$\rotatebox{270}{\Bigg(}$};
\node (label) at ($(a3)!0.5!(b3)$) {\Large{$\cdot$}};
\node (label) at ($(b3)!0.6!(c3)$) {\Large{$\cdot$}};
\node (label) at ($(c3)!0.5!(d3)$) {\Large{$\cdot$}};
\node (label) at ($(d3)!0.5!(e3)$) {\Large{$\cdot$}};
\node (label) at ($(e3)!0.5!(f3)$) {$\rotatebox{270}{\Bigg)}$};
\node (label) at ($(f3)!1!(g3)$) {{$w(a)^2 \cdot w(\eta_1)$}};
\end{tikzpicture}

\end{subfigure}

\begin{subfigure}[b]{0.3\textwidth}
\begin{tikzpicture}[
 add paren/.style={
      left delimiter={(},
      right delimiter={)}, 
    }]

\matrix(m) [column sep=0.5mm,   box/.style={
    shape=rectangle,
    text width=0.3cm,
    minimum height=0.6cm,
    text centered,
    draw=black,
    font=\large,
    font=\bf
},  
boxb/.style={
    shape=rectangle,
    text width=0.3cm,
    minimum height=0.6cm,
    text centered,
    draw=black,
    fill=lblue,
    font=\large,
    font=\bf
}, 
boxy/.style={
    shape=rectangle,
    text width=0.3cm,
    minimum height=0.6cm,
    text centered,
    draw=black,
    fill=lyellow,
    font=\large,
    font=\bf
}, 
boxh/.style={
    shape=rectangle,
    text width=0.3cm,
    minimum height=0.6cm,
    text centered,
    draw=black,
   pattern={mylines[angle=45,line width=2.5pt]}, 
   pattern color=lgray,
    font=\large,
    font=\bf
}, 
float/.style={
    shape=rectangle,
    text width=1.2cm,
    minimum height=0.6cm,
    text centered,
    outer sep=0.1cm,
},
empty/.style={
    shape=rectangle,
    text width=0cm,
    minimum height=0.6cm,
},] 
{
&
 \node[float] (z1) {$\eta$}; &
 \node[float] (z2) {$\sigma'$};&
 \node[float] (z3) { };
  \\
  &
  \node[boxh] (a1) {$\eta_0$};&
   \node[boxy] (a2) {\textcolor{Maroon}{$\sigma'_0$}}; &
    \node[float] (a3) {$0$};\\
 &
  \node[boxh] (b1) {$\eta_1$};&
   \node[box] (b2) {\textcolor{BlueViolet}{$a$}}; &
    \node[float] (b3) {$w(\eta_1)$};\\
   \node[empty] (c0) { };&  
\node[boxh] (c1) {$\eta_2$};&
   \node[box] (c2) {\textcolor{myGreen}{$\eta_2$}}; &
    \node[float] (c3) {$-w(a)$};&
       \node[empty] (c4) { }; \\
          &
\node[boxb] (d1) {$a$};&
   \node[box] (d2) {\textcolor{myBlue}{$\sigma_3$}}; &
    \node[float] (d3) {$1$};\\
              &
\node[boxb] (e1) {$a$};&
   \node[box] (e2) {\textcolor{myBlue}{$\sigma_4$}}; &
    \node[float] (e3) {$1$};\\
    &&&
     \node[float] (f3) {$\shortparallel$};\\
      &&&
     \node[empty] (g3) {};\\
};
\node[fit=(a1) (e1), add paren] (matrix){};
\node (label) at ($(c0)!0!(c1)$) {{\Large $f$}};
\node (label) at ($(c1)!0.6!(c2)$) {:};
\draw[thick,shorten >=0cm,shorten <=0.1cm, |->] (c2)--(c3);
\node (label) at ($(c3)!0.6!(c4)$) {\hspace{5mm}\large{,}};

\node (label) at ($(z3)!0.3!(a3)$) {$\rotatebox{270}{\Bigg(}$};
\node (label) at ($(a3)!0.5!(b3)$) {\Large{$\cdot$}};
\node (label) at ($(b3)!0.6!(c3)$) {\Large{$\cdot$}};
\node (label) at ($(c3)!0.5!(d3)$) {\Large{$\cdot$}};
\node (label) at ($(d3)!0.5!(e3)$) {\Large{$\cdot$}};
\node (label) at ($(e3)!0.5!(f3)$) {$\rotatebox{270}{\Bigg)}$};
\node (label) at ($(f3)!1!(g3)$) {{$0$}};
\end{tikzpicture}
\end{subfigure}
   \hfill
  \begin{subfigure}[b]{0.3\textwidth}
\begin{tikzpicture}[
 add paren/.style={
      left delimiter={(},
      right delimiter={)}, 
    }]

\matrix(m) [column sep=0.5mm,   box/.style={
    shape=rectangle,
    text width=0.3cm,
    minimum height=0.6cm,
    text centered,
    draw=black,
    font=\large,
    font=\bf
},  
boxb/.style={
    shape=rectangle,
    text width=0.3cm,
    minimum height=0.6cm,
    text centered,
    draw=black,
    fill=lblue,
    font=\large,
    font=\bf
}, 
boxy/.style={
    shape=rectangle,
    text width=0.3cm,
    minimum height=0.6cm,
    text centered,
    draw=black,
    fill=lyellow,
    font=\large,
    font=\bf
}, 
boxh/.style={
    shape=rectangle,
    text width=0.3cm,
    minimum height=0.6cm,
    text centered,
    draw=black,
   pattern={mylines[angle=45,line width=2.5pt]}, 
   pattern color=lgray,
    font=\large,
    font=\bf
}, 
float/.style={
    shape=rectangle,
    text width=1.2cm,
    minimum height=0.6cm,
    text centered,
    outer sep=0.1cm,
},
empty/.style={
    shape=rectangle,
    text width=0cm,
    minimum height=0.6cm,
},] 
{
&
 \node[float] (z1) {$\eta$}; &
 \node[float] (z2) {$\sigma''$};&
 \node[float] (z3) { };
  \\
  &
  \node[boxh] (a1) {$\eta_0$};&
   \node[boxy] (a2) {\textcolor{BlueViolet}{$a$}}; &
    \node[float] (a3) {$w(\eta_0)$};\\
 &
  \node[boxh] (b1) {$\eta_1$};&
   \node[box] (b2) {\textcolor{BlueViolet}{$a$}}; &
    \node[float] (b3) {$w(\eta_1)$};\\
   \node[empty] (c0) { };&  
\node[boxh] (c1) {$\eta_2$};&
   \node[box] (c2) {\textcolor{myGreen}{$\eta_2$}}; &
    \node[float] (c3) {$-w(a)$};&
       \node[empty] (c4) { }; \\
          &
\node[boxb] (d1) {$a$};&
   \node[box] (d2) {\textcolor{myBlue}{$\sigma_3$}}; &
    \node[float] (d3) {$1$};\\
              &
\node[boxb] (e1) {$a$};&
   \node[box] (e2) {\textcolor{myBlue}{$\sigma_4$}}; &
    \node[float] (e3) {$1$};\\
    &&&
     \node[float] (f3) {$\shortparallel$};\\
      &&&
     \node[empty] (g3) {};\\
};
\node[fit=(a1) (e1), add paren] (matrix){};
\node (label) at ($(c0)!0!(c1)$) {{\Large $f$}};
\node (label) at ($(c1)!0.6!(c2)$) {:};
\draw[thick,shorten >=0cm,shorten <=0.1cm, |->] (c2)--(c3);
\node (label) at ($(c3)!0.6!(c4)$) {\hspace{5mm}\large{,}};

\node (label) at ($(z3)!0.3!(a3)$) {$\rotatebox{270}{\Bigg(}$};
\node (label) at ($(a3)!0.5!(b3)$) {\Large{$\cdot$}};
\node (label) at ($(b3)!0.6!(c3)$) {\Large{$\cdot$}};
\node (label) at ($(c3)!0.5!(d3)$) {\Large{$\cdot$}};
\node (label) at ($(d3)!0.5!(e3)$) {\Large{$\cdot$}};
\node (label) at ($(e3)!0.5!(f3)$) {$\rotatebox{270}{\Bigg)}$};
\node (label) at ($(f3)!1!(g3)$) {{$-w(a) \cdot w(\eta_0)\cdot w(\eta_1)$}};
\end{tikzpicture}
\end{subfigure}
   \hfill
  \begin{subfigure}[b]{0.3\textwidth}
\begin{tikzpicture}[
 add paren/.style={
      left delimiter={(},
      right delimiter={)}, 
    }]

\matrix(m) [column sep=0.5mm,   box/.style={
    shape=rectangle,
    text width=0.3cm,
    minimum height=0.6cm,
    text centered,
    draw=black,
    font=\large,
    font=\bf
},  
boxb/.style={
    shape=rectangle,
    text width=0.3cm,
    minimum height=0.6cm,
    text centered,
    draw=black,
    fill=lblue,
    font=\large,
    font=\bf
}, 
boxy/.style={
    shape=rectangle,
    text width=0.3cm,
    minimum height=0.6cm,
    text centered,
    draw=black,
    fill=lyellow,
    font=\large,
    font=\bf
}, 
boxh/.style={
    shape=rectangle,
    text width=0.3cm,
    minimum height=0.6cm,
    text centered,
    draw=black,
   pattern={mylines[angle=45,line width=2.5pt]}, 
   pattern color=lgray,
    font=\large,
    font=\bf
}, 
float/.style={
    shape=rectangle,
    text width=1.2cm,
    minimum height=0.6cm,
    text centered,
    outer sep=0.1cm,
},
empty/.style={
    shape=rectangle,
    text width=0cm,
    minimum height=0.6cm,
},] 
{
&
 \node[float] (z1) {$\eta$}; &
 \node[float] (z2) {$\sigma'''$};&
 \node[float] (z3) { };
  \\
  &
  \node[boxh] (a1) {$\eta_0$};&
   \node[box] (a2) {\textcolor{myGreen}{$\eta_0$}}; &
    \node[float] (a3) {$-w(a)$};\\
 &
  \node[boxh] (b1) {$\eta_1$};&
   \node[boxy] (b2) {\textcolor{myGreen}{$\eta_1$}}; &
    \node[float] (b3) {$-w(a)$};\\
   \node[empty] (c0) { };&  
\node[boxh] (c1) {$\eta_2$};&
   \node[box] (c2) {\textcolor{myGreen}{$\eta_2$}}; &
    \node[float] (c3) {$-w(a)$};&
       \node[empty] (c4) { }; \\
          &
\node[boxb] (d1) {$a$};&
   \node[box] (d2) {\textcolor{myBlue}{$\sigma_3$}}; &
    \node[float] (d3) {$1$};\\
              &
\node[boxb] (e1) {$a$};&
   \node[box] (e2) {\textcolor{myBlue}{$\sigma_4$}}; &
    \node[float] (e3) {$1$};\\
    &&&
     \node[float] (f3) {$\shortparallel$};\\
      &&&
     \node[empty] (g3) {};\\
};
\node[fit=(a1) (e1), add paren] (matrix){};
\node (label) at ($(c0)!0!(c1)$) {{\Large $f$}};
\node (label) at ($(c1)!0.6!(c2)$) {:};
\draw[thick,shorten >=0cm,shorten <=0.1cm, |->] (c2)--(c3);

\node (label) at ($(z3)!0.3!(a3)$) {$\rotatebox{270}{\Bigg(}$};
\node (label) at ($(a3)!0.5!(b3)$) {\Large{$\cdot$}};
\node (label) at ($(b3)!0.6!(c3)$) {\Large{$\cdot$}};
\node (label) at ($(c3)!0.5!(d3)$) {\Large{$\cdot$}};
\node (label) at ($(d3)!0.5!(e3)$) {\Large{$\cdot$}};
\node (label) at ($(e3)!0.5!(f3)$) {$\rotatebox{270}{\Bigg)}$};
\node (label) at ($(f3)!1!(g3)$) {{$-w(a)^3$}};
\end{tikzpicture}
\end{subfigure}

\caption{Example of a cochain $f(\eta)$ applied to a sequence $\tau$ such that $\big( f(\eta)\big) (\sigma) \neq 0$, and of the same cochain $f(\eta)$ applied to sequences $\sigma', \sigma''$ and $\sigma'''$ that have a single swapped vertex (highlighted in yellow) with $\sigma$.
Figured here $\eta=(\eta_0,\eta_1,\eta_2,a,a)\in \X_4$ and $\sigma=(\eta_0,\eta_1,\eta_2,\sigma_3,\sigma_4) \in \X_4$ such that $\big( f(\eta)\big) (\sigma)=w(a)^2 \cdot w(\eta_1)$. In the second row,  we display the cochain $f(\eta)$ applied to $\sigma',\sigma'', \sigma''' \bowtie \sigma$: First, $\sigma'$ such that $\sigma'_0 \neq \eta_0, a$ (now in red), in this case it would hold that $\big( f(\eta)\big) (\sigma')=0$.  Then $\sigma'' \& \sigma'''$ such that   $\sigma_0 \neq \sigma_0'' = a$ and  $\sigma_1 \neq \sigma_1''' = \eta_1$, in these cases we would have $\big( f(\eta)\big) (\sigma''), \big( f(\eta)\big) (\sigma''') \neq 0 ,  \big( f(\eta)\big) (\sigma)$.}\label{ex:eig nonzero}
\end{figure}

As stated, we will begin by showing for all $n \in \ZZ_{\geq 0}$ the cochain, $f({(a, \dots , a)}) \in C^n(\X)$,  is an eigenvector. Specifically, that it is associated to eigenvalue $\lambda=1$.  By definition $f_0(a)(y)=1$ for all $y \in V$,  hence 
$f({(a, \dots , a)})= f_0(a) \otimes \dots \otimes f_0(a)=
\sum_{\sigma \in \X_n} e_\sigma\in C^n(\X)$ 
is the constant one cochain,  we will refer to it as $\mathds{1} := f({(a, \dots , a)})$.  For any $\tau \in \X_n$, we may take $v=\mathds{1}$ on the right hand side of (\ref{eigvec}), and get:

\begingroup
\addtolength{\jot}{1em}
\begin{align}
&\left[ (n+2) -\sum_{i=0}^n w(\tau_i) \right] \cdot \mathds{1}(\tau)
- \sum_{\substack{\sigma :\sigma \bowtie \tau,\\ \tau_j \neq \sigma_j}} w(\sigma_j) \cdot \mathds{1}({\sigma}) 
=(n+2) -\sum_{i=0}^n w(\tau_i) 
- \sum_{\substack{\sigma :\sigma \bowtie \tau,\\ \tau_j \neq \sigma_j}} w(\sigma_j) \nonumber\\
=&(n+2) -\sum_{i=0}^n w(\tau_i) 
- \sum_{j=0}^n \sum_{u \neq \tau_j} w(u) \nonumber
=(n+2) 
- \sum_{i=0}^n \sum_{u \in V} w(u) \nonumber
=(n+2)-(n+1) 
=1 \nonumber
\end{align}
\endgroup
\noindent Where  the second to last equality was due to $\sum_{u \in V} w(u)=1$.  We conclude that ,
\begin{align*}
1 \cdot \mathds{1}(\tau) =1 =\left[ (n+2) -\sum_{i=0}^n w(\tau_i) \right] \cdot \mathds{1}(\tau)
- \sum_{\substack{\sigma :\sigma \bowtie \tau,\\ \tau_j \neq \sigma_j}} w(\sigma_j) \cdot \mathds{1}({\sigma}) \hspace{3mm} \text{ ,  } \forall \tau
\end{align*}
Proving $f({(a, \dots , a)})=\mathds{1}$ is an eigenvector corresponding to $\lambda=1$. \\

We will now show that for any $k \in [0:n]$, and $\eta \in  \X_n$ such that $\# \{a \in \eta \} =n-k $,  the cochain $f(\eta)$  as defined in Definition \ref{def:eig basis}, is an eigenvector associated to $\lambda=k+2$.\\

Fix $k \in \{ 0,...,n \}$ and $\eta \in \X_n$ such that $\# \{a \in \eta \} =n-k $. Denote by $I_a \subseteq [0:n]$ the set  of indices such that $\eta_i=a$ for all $i\in I_a$.As we mentioned earlier, we first deal with sequences $\tau \in \X_n$ such that $\big( f(\eta)\big)(\tau)=0$. Consider  any sequence $\tau$ that admits $ i \not\in I_a$, such that the vertex $\tau_i \neq a, \eta_i$,  by definition,  it holds that 
$$\big(f_0(\eta_i)\big)(\tau_i)=0 \qquad \Longrightarrow \qquad \big(f(\eta)\big)(\tau) =0$$

Fix such a $\tau$ and let $i^* \in [0:n]\setminus I_a$ be such that $\tau_{i^*} \neq a, \eta_{i^*}$, again we look at the right hand side of equation (\ref{eigvec}):

As $\big(f(\eta)\big)(\tau)=0$, we immediately have:
\begin{equation}\label{zero case}
    \left[ (n+2) -\sum_{i=0}^n w(\tau_i) \right] \cdot \big(f(\eta)\big)(\tau)
- \sum_{\substack{\sigma :\sigma \bowtie \tau,\\ \tau_j \neq \sigma_j}} w(\sigma_j) \cdot \big(f(\eta)\big)(\sigma) = - \sum_{\substack{\sigma :\sigma \bowtie \tau,\\ \tau_j \neq \sigma_j}} w(\sigma_j) \cdot \big(f(\eta)\big)(\sigma)
\end{equation}

Now,  for $\sigma \bowtie \tau$ such that $\sigma_j \neq \tau_j$ and $j \neq i^*$, we would still have $ \big(f(\eta)\big)(\sigma)=0$. 
Moreover,  if $j=i^*$ but $\sigma_{i^*} \neq a, \eta_{i^*}$, we would also still have that $ \big(f(\eta)\big)(\sigma)=0$.\\
Hence the sum (\ref{zero case}) reduces to:
\begin{align}\label{zero case 2}
 \sum_{\substack{\sigma :\sigma \bowtie \tau,\\ \tau_j \neq \sigma_j}} w(\sigma_j) \cdot  \big(f(\eta)\big)(\sigma) &= \sum_{\substack{\sigma :\sigma \bowtie \tau,\\ \sigma_{i^*}=a}} w(\sigma_{i^*}) \cdot  \big(f(\eta)\big)(\sigma) + \sum_{\substack{\sigma :\sigma \bowtie \tau,\\  \sigma_{i^*}={\eta_{i^*}}}} w(\sigma_{i^*}) \cdot  \big(f(\eta)\big)(\sigma)
\end{align}

Furthermore, as there exists only one $\sigma$ such that $\tau \bowtie \sigma$ and $\sigma_{i^*}=a$, and one $\sigma$ such that $\tau \bowtie \sigma$ and $\sigma_{i^*}=\eta_{i^*}$,  we can reduce the sum (\ref{zero case 2}) to the sum of exactly two terms:

\begingroup
\addtolength{\jot}{0.2em}
\begin{align*}
& \sum_{\substack{\sigma :\sigma \bowtie \tau,\\ \sigma_{i^*}=a}} w(\sigma_{i^*}) \cdot  \big(f(\eta)\big)(\sigma) + \sum_{\substack{\sigma :\sigma \bowtie \tau,\\  \sigma_{i^*}={\eta_{i^*}}}} w(\sigma_{i^*}) \cdot  \big(f(\eta)\big)(\sigma)\\
 =&w(a) \cdot \Biggl( (w(\eta_{i^*})) \cdot \prod_{\substack{s \in [0:n],\\s \neq i^*}} \big(f_0({\eta_s})\big)(\tau_s)\Biggr) +  w(\eta_{i^*}) \cdot \Biggl( (-w(a)) \cdot \prod_{\substack{s \in [0:n],\\s \neq i^*}} \big(f_0({\eta_s})\big)(\tau_s) \Biggr)\\
=&w(a) \cdot w(\eta_i) \cdot \prod_{\substack{s \in [0:n],\\s \neq i^*}}  \big(f_0({\eta_s})\big)(\tau_s) \cdot [1-1] =0 
\end{align*}
\endgroup

We conclude that, for $\tau$ that admit an $ i \in [0:n]\setminus I_a$ such that the vertex $\tau_i \neq a, \eta_i$: 

$$\lambda \cdot \big(f(\eta)\big)(\tau)= 0 = \left[ (n+2) -\sum_{i=0}^n w(\tau_i) \right] \cdot \big(f(\eta)\big)(\tau)
- \sum_{\substack{\sigma :\sigma \bowtie \tau,\\ \tau_j \neq \sigma_j}} w(\sigma_j) \cdot \big(f(\eta)\big)(\sigma)$$

Proving the cochain $f(\eta)$ satisfies the system of equations corresponding to the eigenvalue $\lambda$, on sequences $\tau$ with $\tau_i \neq a, \eta_i$ for $i \in [0:n]\setminus I_a$.  We will now show  the system is also satisfied for any other $\tau \in \X_n$.   In this case for all $i \in [0:n]\setminus I_a$ the vertices $\tau_i$ are forced to be either $a,\eta_i$. Hence we deal as promised with the case of $\tau \in \X_n$, such that $\big(f(\eta)\big)(\tau)=0$.

We use the swap operation from Definition \ref{defs} to rewrite the sum:
\begin{align}\label{sum}
\sum_{\substack{\sigma :\sigma \bowtie \tau,\\ \tau_j \neq \sigma_j}} w(\sigma_j) \cdot \big(f(\eta)\big)(\sigma)= \sum_{j=0}^n \sum_{y \neq \tau_j} w(y) \cdot \big(f(\eta)\big)({\swap{\tau}{j}{y}})
\end{align}

We have assumed, $\eta_j=a$  for all $j\in I_a$.  As such, for all $y \in V$:
\begin{align*}
 \big(f(\eta)\big)({\swap{\tau}{j}{y}})=   \big(f(\eta)\big)(\tau) \hspace{5mm} \forall j \in I_a\\
\end{align*}

Meaning we can reduce part of the sum in (\ref{sum}), when restricted to $j \in I_a$, to:
\begingroup
\addtolength{\jot}{0.5em}
\begin{align}\label{sum on j gives a}
 \sum_{j \in I_a} \sum_{y \neq \tau_j} w(y) \cdot \big(f(\eta)\big)({\swap{\tau}{j}{y}})
= \sum_{j \in I_a} \sum_{y \neq \tau_j} w(y) \cdot  \big(f(\eta)\big)(\tau)
= \sum_{j \in I_a} \left(1 -w(\tau_j)  \right) \cdot \big(f(\eta)\big)(\tau)
\end{align}
\endgroup
Notice in the last line we used the fact that $\sum_{y \in V} w(y)=1$. 

On the other hand,  for $j \in [0:n]\setminus I_a$ we have:
\begin{align*}
 \big(f(\eta)\big)({\swap{\tau}{j}{y}})&=  \big(f_0(\eta_j)\big)(y) \cdot \prod_{s \neq j}  \big(f_0(\eta_s)\big)(\tau_s)
\end{align*}
In particular, as we have assumed $\eta_j \neq a$,  for $j \not\in I_a$,  this means 
\begin{align*}
 \big(f(\eta)\big)({\swap{\tau}{j}{y}})&=
\begin{cases}
\big(f_0(\eta_j)\big)(y) \cdot \displaystyle\prod_{s \neq j}  \big(f_0(\eta_s)\big)(\tau_s) \neq 0 \hspace{3mm}  \text{if }  y=\eta_j,a\\
0 \hspace{50mm}  \text{ if }  y \neq \eta_j,a
\end{cases}
\end{align*}

We recall that for $j \in [0:n]\setminus I_a$ the vertices $\tau_j$ are forced to be either $a,\eta_j$.   Consider the case $\tau_j=a$,  then we can reduce part of the sum in (\ref{sum}) to:
\begingroup
\addtolength{\jot}{0.5em}
\begin{align*}
&\sum_{y \neq \tau_j} w(y) \cdot \big(f(\eta)\big)({\swap{\tau}{j}{y}}) 
=w(\eta_j) \cdot  \big(f_0(\eta_j)\big)(\eta_j) \cdot \displaystyle\prod_{s \neq j} \big(f_0(\eta_s)\big)(\tau_s)\\
=& w(\eta_j) \cdot  (-w(a)) \cdot \displaystyle\prod_{s \neq j}  \big(f_0(\eta_s)\big)(\tau_s)
=  \big(f_0(\eta_j)\big)(\tau_j) \cdot (-w(a)) \cdot \displaystyle\prod_{s \neq j}  \big(f_0(\eta_s)\big)(\tau_s)
= -w(\tau_j) \cdot \big(f(\eta)\big)(\tau)
\end{align*}
\endgroup
Similarly,  consider the case where $\tau_j=\eta_j$,  again we can reduce the sum to:
\begingroup
\addtolength{\jot}{0.5em}
\begin{align*}
&\sum_{y \neq \tau_j} w(y) \cdot \big(f(\eta)\big)({\swap{\tau}{j}{y}}) 
=w(a) \cdot  \big(f_0(\eta_j)\big)(a) \cdot \displaystyle\prod_{s \neq j}  \big(f_0(\eta_s)\big)(\tau_s)\\
=& w(a) \cdot  w(\eta_j) \cdot \displaystyle\prod_{s \neq j}  \big(f_0(\eta_s)\big)(\tau_s)
= -  \big(f_0(\eta_j)\big)(\tau_j) \cdot  w(\eta_j) \cdot \displaystyle\prod_{s \neq j}  \big(f_0(\eta_s)\big)(\tau_s)
= -w(\tau_j) \cdot \big(f(\eta)\big)(\tau)
\end{align*}
\endgroup

Putting both these cases together we can rewrite the  sum in (\ref{sum}) restricted to $[0:n]\setminus I_a$, as:
\begin{align}\label{sum j not a}
 \sum_{j \not\in I_a} \sum_{y \neq \tau_j} w(y) \cdot \big(f(\eta)\big)({\swap{\tau}{j}{y}})
&= \sum_{j \not\in I_a} -w(\tau_j) \cdot \big(f(\eta)\big)(\tau)
\end{align}
 Putting the restricted sums (\ref{sum on j gives a}) and (\ref{sum j not a}) together, and plugging into the right hand side of (\ref{eigvec}),  we see that for $\tau$ for which all vertices $\tau_i$, for $i \not\in I_a$,  are forced to be either $a,\eta_i$, we have:
\begingroup
\addtolength{\jot}{0.5em}
\begin{align*}
&\left[ (n+2) -\sum_{j=0}^n w(\tau_j) \right] \cdot \big(f(\eta)\big)(\tau)
- \sum_{\substack{\sigma :\sigma \bowtie \tau,\\ \tau_j \neq \sigma_j}} w(\sigma_j) \cdot\big(f(\eta)\big)({\sigma})\\
= &\left[ (n+2) -\sum_{j=0}^n w(\tau_j) \right] \cdot \big(f(\eta)\big)(\tau) +
 \sum_{j \not\in I_a} w(\tau_j) \cdot \big(f(\eta)\big)(\tau) - \sum_{j \in I_a} \left(1 -w(\tau_j)  \right) \cdot \big(f(\eta)\big)(\tau) 
 = (k+2)\cdot \big(f(\eta)\big)(\tau)   
\end{align*}
\endgroup

Notice in the last equality we used $|I_a|=n-k$, as per our assumption. With this we conclude $f(\eta)$ satisfies the system of equations corresponding to the eigenvalue $\lambda=k+2$ for all sequences $\tau \in \X_n$. Proving $f(\eta)$ is an eigenvector for $\lambda=k+2$ for $\eta$ such that $\#  \{ a \in \eta \} =n-k$.\\ 

As we know the cochains $f(\eta)$ form a basis for $C^n$, they must span the eigenspaces of the Laplacian, $L_n$. Hence the multiplicity of each eigenvalue $\lambda=1,...,n+2$ is easily computed to be,
\begin{align*}
\text{mult}(\lambda)&= \# \Big\{ \eta \in \X_n : \# \{a \in \eta \} = n+2-\lambda \Big\} 
=\binom{n+1}{\lambda-1} \cdot (|V|-1)^{\lambda-1}
\end{align*}

With this, we finish the proof of the main theorem.
\end{proof}

\medskip
\section{Proof of Theorem \ref{prop:IndSimpLap} for Simplicial Complexes}\label{sec:Simpl cpx}
In this section we prove  the analogous results to the previous section's, in the setting of simplicial complexes. As we mentioned earlier, the results in this section, if at all new, are simple consequences of classical results for the Hodge Laplacian. We present here results, old and new, for completion, and to provide a parallel to the sequence complex setting.

\subsection{Laplacian over Simplicial Complexes}
We will consider a weighted simplicial complex $K \subseteq 2^{[m]}$  with weight function $w:  K\to(0,+\infty)$ and associated inner product  $\langle \cdot , \cdot \rangle$ on $C^*(K)$, defined via its values on the usual basis$\{e_{\xi}\}$ as: 
 \begin{equation*}
\langle e_{\xi}, e_{\xi'}\rangle \od 
\begin{cases}
w(\xi) , & \text{ if } \xi= \xi' \\
0& \text{ if } \xi \neq  \xi'
\end{cases}
 \end{equation*} 

In this setting the coboundary $\d$ and coboundary adjoint $\d^*$ operators are defined for any face $\xi =\{i_0,...,i_{n+1} \}  \in K_{n+1}$,  with vertices labeled such that $i_0< \dots < i_{n+1}$, as:
\begin{align*}
  \d_n e_{\xi }  
&= \sum_{\substack{i \in [m]\setminus \xi,\\ \{i\}\cup\xi \in K}} \kappa({\{i\}\cup\xi},{\xi}) \cdot e_{\{i\}\cup\xi }\\
\d_n^* e_{\xi}  
&= \sum_{\xi' \in K_n}  \frac{w(\xi)}{w(\xi')}  \kappa({\xi},{\xi'}) e_{\xi' }
= \sum_{j=0}^{n+1}  \frac{w(\xi)}{w(\xi \setminus \{i_j\})}  (-1)^j e_{\xi \setminus \{i_j \} }
\end{align*}
Where the incidence function $\kappa$ is defined as following Definition \ref{ex:simplicial}. In this setting, the Laplacian is easily computed as stated in the following lemma.\\

\begin{lemma}\label{prop:simpLapl}
Let $2^{[m]}$ be  a weighted full simplex, with weight function $w:2^{[m]} \to (0,1]$. For all $n \in \ZZ_{\geq -1}$ and   faces $\xi =\{i_0,...,i_{n} \}  \in K_{n}$,   with vertices labeled such that $i_0< \dots < i_n$,  the Laplacian operator on the basis elements $e_{\xi }\in C^n(2^{[m]})$,  is given by:
  \begingroup
\addtolength{\jot}{0.8em}
\begin{align*}
L_n(e_{\xi })= &\left( 
\sum_{i \in [m] \setminus \xi}  \dfrac{w(\{i\} \cup \xi)}{w(\xi)} + \sum_{k=0}^n \dfrac{w(\xi)}{w(\xi \setminus \{i_k\})}
\right) \cdot e_{\xi }\\
&+ \sum_{i \in [m] \setminus \xi }  \sum_{k=0}^{n} 
(-1)^k \cdot  \kappa(\{i\} \cup \xi \setminus \{i_k\},\xi \setminus \{i_k\}) 
\cdot \Bigg(
\frac{w(\xi)}{w(\xi \setminus \{i_k\})}
- \frac{w(\{i\} \cup \xi)}{w((\{i\} \cup \xi) \setminus \{i_k\})}
\Bigg)  \cdot  e_{\{i\}\cup\xi \setminus \{i_k\} }
\end{align*}
\endgroup
\end{lemma}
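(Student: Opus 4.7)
My plan is to compute the two summands $L_n^{\up} = \delta_n^* \delta_n$ and $L_n^{\down} = \delta_{n-1} \delta_{n-1}^*$ separately on the basis cochain $e_\xi$, using the explicit formulas for $\delta_n$ and $\delta_n^*$ displayed immediately above the lemma, and then read off the coefficient of each output basis vector. Since we are working in the full simplex $2^{[m]}$, every subset is a face, so no membership conditions need to be tracked.

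For $L_n^{\up} e_\xi$, I will first write $\delta_n e_\xi = \sum_{i \in [m]\setminus \xi} \kappa(\{i\}\cup\xi,\xi)\, e_{\{i\}\cup\xi}$ and then apply $\delta_n^*$ to each term, producing a double sum indexed by $i$ and by the vertex $v$ removed from $\{i\}\cup\xi$. Splitting according to whether $v = i$ (which returns the basis vector $e_\xi$) or $v = i_k$ for some $k\in\{0,\dots,n\}$ (which yields $e_{\{i\}\cup\xi\setminus\{i_k\}}$) gives two pieces; the $v=i$ case contributes $\sum_{i\in[m]\setminus\xi} w(\{i\}\cup\xi)/w(\xi)$ to the diagonal, using $\kappa(\{i\}\cup\xi,\xi)^2 = 1$. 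Dually, for $L_n^{\down} e_\xi$, I will expand $\delta_{n-1}^* e_\xi = \sum_{k=0}^n (-1)^k \frac{w(\xi)}{w(\xi\setminus\{i_k\})} e_{\xi\setminus\{i_k\}}$ and apply $\delta_{n-1}$ to each summand. Splitting according to whether the vertex added back equals $i_k$ (yielding $e_\xi$ with coefficient $w(\xi)/w(\xi\setminus\{i_k\})$ since $\kappa(\xi,\xi\setminus\{i_k\})=(-1)^k$) or some $i \in [m]\setminus\xi$ (yielding $e_{\{i\}\cup\xi\setminus\{i_k\}}$) completes the decomposition. The two diagonal contributions together produce the parenthesized coefficient of $e_\xi$ in the lemma.

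The main obstacle is the sign bookkeeping needed to combine the two off-diagonal contributions on the common basis element $e_{\{i\}\cup\xi\setminus\{i_k\}}$. The key identity is provided by condition (3) of Definition \ref{def:ACC}, applied to the pair $(\{i\}\cup\xi,\xi\setminus\{i_k\})$: summing $\kappa(\{i\}\cup\xi,\eta)\kappa(\eta,\xi\setminus\{i_k\})$ over middle cells $\eta$ of dimension $n$ leaves only two nonzero terms, namely $\eta=\xi$ and $\eta=\{i\}\cup\xi\setminus\{i_k\}$, giving
\[
\kappa(\{i\}\cup\xi,\xi)\,\kappa(\xi,\xi\setminus\{i_k\}) + \kappa(\{i\}\cup\xi,\{i\}\cup\xi\setminus\{i_k\})\,\kappa(\{i\}\cup\xi\setminus\{i_k\},\xi\setminus\{i_k\}) = 0.
\]
Substituting $\kappa(\xi,\xi\setminus\{i_k\}) = (-1)^k$ and using that all incidence values lie in $\{\pm 1\}$ converts the product $\kappa(\{i\}\cup\xi,\xi)\,\kappa(\{i\}\cup\xi,\{i\}\cup\xi\setminus\{i_k\})$ arising from $L_n^{\up}$ into $-(-1)^k\,\kappa(\{i\}\cup\xi\setminus\{i_k\},\xi\setminus\{i_k\})$.

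Once this sign identity is in hand, the two off-diagonal contributions share the common prefactor $(-1)^k\,\kappa(\{i\}\cup\xi\setminus\{i_k\},\xi\setminus\{i_k\})$ and combine into the difference of weight ratios $\tfrac{w(\xi)}{w(\xi\setminus\{i_k\})} - \tfrac{w(\{i\}\cup\xi)}{w((\{i\}\cup\xi)\setminus\{i_k\})}$, which is exactly the off-diagonal expression in the statement. Collecting the diagonal and off-diagonal contributions then yields the claimed formula for $L_n(e_\xi)$.
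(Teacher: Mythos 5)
Your proposal is correct and follows essentially the same route as the paper: expand $L_n^{\up}=\delta_n^*\delta_n$ and $L_n^{\down}=\delta_{n-1}\delta_{n-1}^*$ on $e_\xi$, split according to which vertex is removed or added back, and collect the diagonal and off-diagonal coefficients. The only difference is in one bookkeeping step, and it is a pleasant one: where the paper obtains the sign relation $\kappa(\{i\}\cup\xi,\xi)\,\kappa(\{i\}\cup\xi,\{i\}\cup\xi\setminus\{i_k\}) = -(-1)^k\,\kappa(\{i\}\cup\xi\setminus\{i_k\},\xi\setminus\{i_k\})$ by an explicit case analysis on $i_k<i$ versus $i_k>i$, you derive it from condition (3) of Definition \ref{def:ACC} applied to the pair $(\{i\}\cup\xi,\,\xi\setminus\{i_k\})$, which is valid because $\xi$ and $\{i\}\cup\xi\setminus\{i_k\}$ are the only intermediate faces with nonzero incidence products, so the two arguments coincide in substance.
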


\begin{proof}
We compute $L_n^{down}$ and $L_n^{up}$ separately,  by plugging in our computations for  $\d$ and $\d^*$ each time. Consider a face $\xi =\{i_0,...,i_{n} \}  \in K_{n}$,  with vertices labeled such that $i_0< \dots < i_n$:
  \begingroup
\addtolength{\jot}{1em}
\begin{align}\label{calc:Lup}
L_n^{up} e_{\xi } =& \d^* \d_n e_{\xi }
= \d^* \left( \sum_{i \in [m]\setminus \xi} \kappa({\{i\}\cup\xi},{\xi}) \cdot e_{\{i\}\cup\xi } \right)\nonumber\\
=& \sum_{i \in [m]\setminus \xi} \kappa({\{i\}\cup\xi},{\xi}) \cdot \Bigg[
\sum_{j \in \{i\} \cup \xi}  \frac{w(\{i\} \cup \xi)}{w((\{i\} \cup \xi) \setminus \{j\})} \kappa(\{i\} \cup \xi, \{i\} \cup \xi \setminus \{j\}) \cdot e_{\{i\} \cup \xi \setminus \{j\}} \Bigg] \nonumber\\
=& \sum_{i \in [m]\setminus \xi} \kappa({\{i\}\cup\xi},{\xi}) \cdot \Bigg[
\frac{w(\{i\} \cup \xi)}{w(\xi)}\kappa({\{i\}\cup\xi},{\xi})  \cdot  e_{\xi } \nonumber\\ 
 &+ \sum_{\substack{i_k \in \xi, \\ i_k < i}} \frac{w(\{i\} \cup \xi)}{w((\{i\} \cup \xi) \setminus \{i_k\})}  (-1)^{k} \cdot e_{\{i\}\cup\xi \setminus \{i_k\} }
 + \sum_{\substack{i_k \in \xi, \\ i_k > i}}  \frac{w(\{i\} \cup \xi)}{w((\{i\} \cup \xi) \setminus \{i_k\})}  (-1)^{k+1} \cdot e_{\{i\}\cup\xi \setminus \{i_k\} } \Bigg]
\end{align}

Here we made use of the simple observation:
\begin{align*}
\kappa(\{i\} \cup \xi, \{i\} \cup \xi \setminus \{i_k\})= 
\begin{cases}
(-1)^k \hspace{5mm} &\text{if } i_k<i\\
(-1)^{k+1} \hspace{5mm} &\text{if } i_k>i
\end{cases}
\end{align*}
On the other hand, notice:
\begin{align*}
\kappa(\{i\} \cup \xi \setminus \{i_k\},   \xi \setminus \{i_k\})= 
\begin{cases}
-\kappa(\{i\} \cup \xi,   \xi ) \hspace{5mm} &\text{if } i_k<i\\
\kappa(\{i\} \cup \xi,   \xi ) \hspace{5mm} &\text{if } i_k>i
\end{cases}
\end{align*}
We can use this, together with $\Big( \kappa(\{i\} \cup \xi,   \xi \Big)^2=1$,  to simplify the expression from (\ref{calc:Lup}) :
\begin{align*}\label{calc:Lup}
L_n^{up} e_{\xi } 
= \sum_{i \in [m]\setminus \xi}
\frac{w(\{i\} \cup \xi)}{w(\xi)}   \cdot  e_{\xi } 
 + \sum_{i \in [m]\setminus \xi}  \sum_{k=0}^n 
 \frac{w(\{i\} \cup \xi)}{w((\{i\} \cup \xi) \setminus \{i_k\})}
 \kappa(\{i\} \cup \xi \setminus \{i_k\},   \xi \setminus \{i_k\}) 
   \cdot (-1)^{k+1} \cdot e_{\{i\}\cup\xi \setminus \{i_k\} }
\end{align*}
Now let us look at $L_n^{down}$,
\begin{align*}
&L_n^{down} e_{\xi }= \d \d_{n-1}^* e_{\xi }
= \d \left( \sum_{k=0}^{n}  \frac{w(\xi)}{w(\xi \setminus \{i_k\})}  (-1)^k \cdot e_{\xi \setminus \{i_k \} } \right)\\
=&\sum_{k=0}^{n} 
\frac{w(\xi)}{w(\xi \setminus \{i_k\})}  (-1)^k
\Bigg[
  \kappa(\xi,\xi \setminus \{i_k\}) \cdot e_{\xi}
+ \sum_{i \in [m] \setminus \xi }
 \kappa(\{i\} \cup \xi \setminus \{i_k\},\xi \setminus \{i_k\}) \cdot e_{\{i\}\cup\xi \setminus \{i_k\} }
\Bigg]\\
=&\sum_{k=0}^{n} 
\frac{w(\xi)}{w(\xi \setminus \{i_k\})}  (-1)^{2k} \cdot e_{\xi}
+ \sum_{i \in [m] \setminus \xi }  \sum_{k=0}^{n} 
\frac{w(\xi)}{w(\xi \setminus \{i_k\})}
(-1)^k \cdot
 \kappa(\{i\} \cup \xi \setminus \{i_k\},\xi \setminus \{i_k\}) 
 \cdot  e_{\{i\}\cup\xi \setminus \{i_k\} }
\end{align*}
\endgroup

Putting $L_n^{down}$ and $L_n^{up}$ together, we get 
  \begingroup
\addtolength{\jot}{1em}
\begin{align*}
L_n(e_{\xi })=&L^\down_n(e_{\xi }) + L^\up_n(e_{\xi })
= \left( 
\sum_{i \in [m] \setminus \xi}   \dfrac{w(\{i\} \cup \xi)}{w(\xi)} + \sum_{k=0}^n \dfrac{w(\xi)}{w(\xi \setminus \{i_k\})}
\right) \cdot e_{\xi }\\
&+ \sum_{i \in [m] \setminus \xi }  \sum_{k=0}^{n} 
(-1)^k \cdot  \kappa(\{i\} \cup \xi \setminus \{i_k\},\xi \setminus \{i_k\}) 
\cdot \Bigg(
\frac{w(\xi)}{w(\xi \setminus \{i_k\})}
- \frac{w(\{i\} \cup \xi)}{w((\{i\} \cup \xi) \setminus \{i_k\})}
\Bigg)  \cdot  e_{\{i\}\cup\xi \setminus \{i_k\} }
\end{align*}
\endgroup
Proving the expression in the proposition statement.
\end{proof}

An immediate consequence of this lemma, tells us that scaling the weight function, by the same factor over all dimensions, has no impact on the associated Laplacian.

\begin{lemma}\label{lemma:scaling}
Let $2^{[m]}$ be  a weighted full simplex, with weight function $w:2^{[m]} \to (0,1]$. Consider a different weight function $w':2^{[m]} \to (0,1]$,  defined as $w'(\xi)=\alpha \cdot w(\xi)$ for all $\xi \in 2^{[m]}$, for some $\alpha \in \R$. The respective associated Laplacians $L^w_n$ and $L^{w'}_n$ satisfy $$L^w_n=L^{w'}_n \quad \text{ for all } n \geq -1.$$
\end{lemma}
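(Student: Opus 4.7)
My plan is to leverage the explicit formula for the Laplacian derived in Lemma \ref{prop:simpLapl} and observe that it depends on the weight function $w$ only through ratios of weights evaluated at different faces. Since a uniform rescaling $w' = \alpha w$ preserves every such ratio, the expression for $L_n^{w'}$ will coincide term-by-term with that of $L_n^w$.

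Concretely, the first step is to write down the formula from Lemma \ref{prop:simpLapl} applied to $w'$. The diagonal coefficient of $e_\xi$ is a sum of terms of the form $w'(\{i\}\cup\xi)/w'(\xi)$ and $w'(\xi)/w'(\xi\setminus\{i_k\})$, and the off-diagonal coefficient of $e_{\{i\}\cup\xi\setminus\{i_k\}}$ is a difference of terms of the form $w'(\xi)/w'(\xi\setminus\{i_k\})$ and $w'(\{i\}\cup\xi)/w'((\{i\}\cup\xi)\setminus\{i_k\})$. In each of these ratios the constant $\alpha$ appears in both the numerator and denominator, so it cancels, and each ratio equals the corresponding ratio in $w$.

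The second step is then immediate: substituting $w'(\zeta)/w'(\zeta') = w(\zeta)/w(\zeta')$ into every ratio appearing in the formula yields the same expression as $L_n^w(e_\xi)$ on each basis element $e_\xi \in C^n(2^{[m]})$, for every $n \geq -1$. Since the Laplacians agree on a basis, they are equal as operators.

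I do not anticipate any real obstacle here; the lemma is essentially an observation about the scale-invariance of the Laplacian formula. The only caveat worth mentioning is that the inner product on $C^*(2^{[m]})$ used to define the adjoint $\delta^*$ does change under the rescaling (each $\langle e_\xi, e_\xi\rangle$ is scaled by $\alpha$), but because $\delta_n^* e_\xi$ involves the ratio $w(\xi)/w(\xi')$ rather than $w(\xi)$ itself, the adjoint operator is unchanged, which is precisely what ensures the Laplacian is unchanged. This can be noted in a single sentence before invoking the formula from Lemma \ref{prop:simpLapl}.
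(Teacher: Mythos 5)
Your argument is correct and is exactly the route the paper takes: the paper states this lemma as an immediate consequence of Lemma \ref{prop:simpLapl}, whose expression for $L_n$ involves the weights only through ratios of weights on adjacent faces, which are invariant under the rescaling $w' = \alpha w$. Your additional remark that the coboundary $\delta_n$ is weight-independent and the adjoint $\delta_n^*$ depends only on such ratios is a correct (and slightly more structural) justification of the same fact.
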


It is also sometimes useful to think of the matrix representation of $L_n$ directly in terms of the matrix representation of $\d$. This particularly comes in useful when relating to the combinatorial Laplacian. Let $D_n$ be the matrix representation of $\d_n$ and $W_n$ the diagonal matrix representing the inner product on $C^n(K)$. Then we may express the $L_n^\up, L_n^\down$ operators as:
\begin{align*}
    L_n^\up &=W_n^{-1} D_n^T W_{n+1}D_n\\
    L_n^\down &=D_{n-1}W_{n-1}^{-1}D_{n-1}^T W_n
\end{align*}

Consider $K=G$ to be a weighted graph. Denote by $\mathbb{A}$ the diagonal matrix of vertex weights, following our definition $\mathbb{A}=W_0$. On the other hand, let $\mathbb{W}$ be the \textit{weighted adjacency matrix}: $\mathbb{W}_{ij} = w(\{i,j\})$, and $\mathbb{D}$ be the \textit{degree matrix} given by, $\mathbb{D}_{ii}=\sum_{j \neq i} \mathbb{W}_{ij}$ . It is easy to show that, $$\mathbb{D} - \mathbb{W} = D_0^T W_1 D_0.$$
Which implies 
$$L_0^\up =W_0^{-1} D_0^T W_{1}D_0 = \mathbb{A}^{-1} [\mathbb{D} - \mathbb{W}]= L_{\text{comb}}$$
As we had referenced previously in Section \ref{subsec:hodge}.\\

\medskip
\subsection{Laplacian on Simplicial Complexes for the  Independent Vertices Model}
Here we  now show the Laplacian for the independent vertices  model is ``boring"; we  recall the theorem we stated in section \ref{results}:

\begin{reptheorem}{prop:IndSimpLap}
Let $(2^{[m]},w)$ be a weighted simplicial complex, with  weight function $w:2^{[m]} \to \R_{>0}$ such that $w(\varnothing)=1$.  Then the following two conditions are equivalent:
\begin{enumerate}
\item[(i)] $(2^{[m]},w)$ is an independent vertices model;
\item[(ii)] the associated Laplacians are multiples of identities:
\begin{equation} \label{eq:identities}
L_n=\alpha_n \cdot I_{C^n}, \quad \forall n\geq -1,   
\end{equation}
where $I_{C^n}$ denotes  the identity on $C^n(2^{[m]})$, and $\alpha_n \in \R$.
\end{enumerate} 

Moreover, if \eqref{eq:identities} holds, then  $\alpha_n$ is constant with respect to  $n$, and  
\begin{equation*} \alpha_n = \sum_{i=1}^m w_i,\end{equation*}
where  $(w_1,\dots,w_m)$ is the vector determining the independent vertices model associated to the weight function $w$.\\
\end{reptheorem}

\begin{proof}
We start by proving the reverse direction: we assume our simplicial complex follows the  independent weights model,  and wish to prove the associated Laplacian $L_n=\alpha_n \cdot I_{C^n(2^{[m]})}$.  Notice that for $n=-1,m-1$, we have $|(2^{[m]})_n|=1$,  making the statement trivial.  We now focus on the cases for $n=0,...,m-1$: Fix $e_\xi \in C^n(2^{[m]})$, we call upon Lemma \ref{prop:simpLapl}:
  \begingroup
\addtolength{\jot}{1em}
\begin{align*}
&L_n(e_{\xi })
= \sum_{i \in [m] \setminus \xi }  \sum_{k=0}^{n} 
(-1)^k \cdot  \kappa(\{i\} \cup \xi \setminus \{i_k\},\xi \setminus \{i_k\}) 
\cdot \Bigg(
\frac{w(\xi)}{w(\xi \setminus \{i_k\})}
- \frac{w(\{i\} \cup \xi)}{w((\{i\} \cup \xi) \setminus \{i_k\})}
\Bigg)  \cdot  e_{\{i\}\cup\xi \setminus \{i_k\} }\\
+&\left( 
\sum_{i \in [m] \setminus \xi}   \dfrac{w(\{i\} \cup \xi)}{w(\xi)} + \sum_{k=0}^n \dfrac{w(\xi)}{w(\xi \setminus \{i_k\})}
\right) \cdot e_{\xi }
\hspace{2mm}=\hspace{2mm} 
\left( 
\sum_{i \in [m] \setminus \xi}  w(i) + \sum_{k=0}^n w(i_k)
\right) \cdot e_{\xi }\\
+& \sum_{i \in [m] \setminus \xi }  \sum_{k=0}^{n} 
(-1)^k \cdot  \kappa(\{i\} \cup \xi \setminus \{i_k\},\xi \setminus \{i_k\}) 
\cdot \Big( w(i_k) - w(i_k) \Big)  
\cdot  e_{\{i\}\cup\xi \setminus \{i_k\} }
\hspace{2mm}=\hspace{2mm}  
\left( 
\sum_{i \in [m] }   w(i) 
\right) \cdot e_{\xi }
+ 0\\
\end{align*}
\endgroup
Proving $L_n=\alpha_n \cdot I_{C^n(2^{[m]})}$ for $\alpha_n=\sum_{i \in [m]}w(i)$ for all $n= 0,...,m-2$.  In the cases of $n=-1,m-1$, the computation of $L_n$ easily shows that $\alpha_n$ must also be $\alpha_n=\sum_{i \in [m]}w(i)$.

We now proceed to prove the forward direction, we assume for all $n=-1,\dots,m$, it holds that $L_n=\alpha_n \cdot I_{C^n(2^{[m]})}$.  Putting this assumption together with Lemma \ref{prop:simpLapl} tells us that for all $n \in [-1:m]$, face $\xi \in \left(2^{[m]}\right)_n$, and vertices $i \in [m]\setminus \xi, i_k \in \xi$ it holds that 
$$ \dfrac{w(\{i\} \cup \xi)}{w(\{i\} \cup \xi \setminus \{i_k\})} =\dfrac{w(\xi)}{w(\xi \setminus \{i_k\})}$$

\newpage
We will use this consequence of our hypothesis to prove the weights follow the independent vertices model by applying induction over $n$.  The statement is assumed in the proposition statement for $n=-1$ and vacuous for $n=0$, let us set $n=1$.
Consider $\{i,j\} \in \left(2^{[m]}\right)_1$,  by hypothesis:
\begin{align*}
 \dfrac{w(\{i\} \cup \{j\})}{w(\{i\} \cup \{j\} \setminus \{j\})} =\dfrac{w(\{j\})}{w(\{j\} \setminus \{j\})}
 \hspace{2mm} \iff \hspace{2mm}
  \dfrac{w(\{i,j\})}{w(\{i\} )} =\dfrac{w(\{j\})}{w(\varnothing)}
\end{align*}
As the proposition statement assumes $w(\varnothing)=1$, we conclude $w(\{i,j\})=w(\{i\})w(\{j\})$, proving the base case. Now assume the result holds for faces of dimension $n$,  we will show it holds for faces of dimension $n+1$.\\
Consider $\{i_0,\dots,i_{n+1}\} \in \left(2^{[m]}\right)_{n+1}$, by hypothesis:
\begin{align*}
 \dfrac{w(\{i_0,\dots,i_n\} \cup \{i_{n+1}\})}{w(\{i_0,\dots,i_n\} \cup \{i_{n+1}\} \setminus \{i_0\})} =\dfrac{w(\{i_0,\dots,i_n\})}{w(\{i_0,\dots,i_n\} \setminus \{i_0\})}
 \hspace{2mm} \iff \hspace{2mm}
  \dfrac{w(\{i_0,\dots,i_{n+1}\})}{w(\{i_1,\dots,i_{n+1}\} )} =\dfrac{w(\{i_0,\dots,i_n\})}{w(\{i_1,\dots,i_n)}
\end{align*}
Applying the induction hypothesis, we get
\begin{align*}
w(\{i_0,\dots,i_{n+1}\}) = \dfrac{
\prod_{k=1}^{n+1}w(i_k) \prod_{k=0}^n w(i_k)
}{
\prod_{k=1}^n w(i_k)
}
=\prod_{k=0}^{n+1} w(i_k)
\end{align*}
proving the induction step, and the proof for the theorem.
\end{proof}

\section*{Conclusion}
We introduced  a general framework for weighted sequence complexes, where the weights are derived from  probability distributions  on sequences, and extended the Hodge Laplacian formalism to this setting. 
An important requirement for such a Laplacian is that it has a 
 ``simple'' spectrum in the null models with independent vertices. \medskip

We completely characterized  the spectrum of the associated Laplacian for the independent vertices model, both in the sequence and simplicial complex setting. For  simplicial complexes we found (Corollary  \ref{cor:simplicial}) that the appropriate Laplacians are multiples of the identity. This happens with both natural choices of the weights:  (i) if the weights are the probabilities and (ii) if the weights are the moments, and confirms a  ``common sense'' intuition  that there should not be any {\it preferred}  eigenbasis in the independent model, as the data does not have any ``interesting'' structure beyond the independence of vertices.\\

We found, that unlike in the simplicial complex case, the Laplacian of the independent vertices model for  the sequence complex has a  stereotypical structure (Theorem \ref{bigThm}). Specifically,  it has distinct integer eigenvalues, with  multiplicities that scale exponentially in eigenvalues. Here  the weight function on the sequence complex was chosen to be the conditional probability per equation \eqref{eq:wpsigma}. This choice  is natural in this context, as the length of the sequence is another random variable that does not depend on the assumption that vertices appear in each sequence position independently. The eigenbasis for this independent vertices model provides the analogue of the Fourier transform in the context of the Laplacians in a Euclidean domain. 

There are many open  questions  that stem  from our results. 
For example, both of our independent vertices models can be thought of as  maximal entropy distributions that are constrained by the first moments. What is special about the spectrum of common maximal entropy distributions (such as the Ising model), constrained by the higher moments? What is the interpretation of the diffusion equation  for the Hodge Laplacian on the weighted sequence complex?     What can be said about the probability distributions on sequence complexes that are Markovian? Can one utilize the Hodge Laplacian on sequences for construction of the language models, similar to how the Hodge Laplacian was used in  simplicial neural networks \cite{Ebli2020,Barbarossa_2020,Bodnar2021,roddenberry2021principled,giusti2022simplicial,keros2022dist2cycle,roddenberry2022signal,bodnar2022neural}?   We believe that our results establish a foundation for dimensionality reduction and Fourier analyses of probabilistic models, that are common in theoretical neuroscience and machine-learning.

\bigskip
\noindent {\large \bf  Acknowledgments:}
This work was supported by the NSF Next Generation Networks for Neuroscience Program (award 2014217).

\newpage
\bibliographystyle{unsrt}

\bibliography{refs.bib}

\begin{thebibliography}{10}

\bibitem{Kirchhoff1847}
Gustav~R. Kirchhoff.
\newblock Ueber die aufl{\"o}sung der gleichungen, auf welche man bei der
  untersuchung der linearen vertheilung galvanischer str{\"o}me gef{\"u}hrt
  wird.
\newblock {\em Annalen der Physik}, 148:497--508, 1847.

\bibitem{Belkin2003}
Mikhail Belkin and Partha Niyogi.
\newblock Laplacian eigenmaps for dimensionality reduction and data
  representation.
\newblock {\em Neural Computation}, 15(6):1373--1396, 2003.

\bibitem{coifman2005geometric}
Ronald~R Coifman, Stephane Lafon, Ann~B Lee, Mauro Maggioni, Boaz Nadler,
  Frederick Warner, and Steven~W Zucker.
\newblock Geometric diffusions as a tool for harmonic analysis and structure
  definition of data: Multiscale methods.
\newblock {\em Proceedings of the National Academy of Sciences},
  102(21):7432--7437, 2005.

\bibitem{Bronstein2017}
Michael~M. Bronstein, Joan Bruna, Yann LeCun, Arthur Szlam, and Pierre
  Vandergheynst.
\newblock Geometric deep learning: Going beyond euclidean data.
\newblock {\em {IEEE} Signal Processing Magazine}, 34(4):18--42, jul 2017.

\bibitem{Ricaud2019}
Benjamin Ricaud, Pierre Borgnat, Nicolas Tremblay, Paulo Gonçalves, and Pierre
  Vandergheynst.
\newblock Fourier could be a data scientist: From graph fourier transform to
  signal processing on graphs.
\newblock {\em Comptes Rendus Physique}, 20(5):474--488, 2019.

\bibitem{Eckmann1944}
Beno Eckmann.
\newblock Harmonische funktionen und randwertaufgaben in einem komplex.
\newblock {\em Commentarii mathematici Helvetici}, 17:240--255, 1944/45.

\bibitem{Horak2013}
Danijela Horak and Jürgen Jost.
\newblock Spectra of combinatorial laplace operators on simplicial complexes.
\newblock {\em Advances in Mathematics}, 244:303--336, 2013.

\bibitem{Horak2011InterlacingIF}
Danijela Horak and J{\"u}rgen Jost.
\newblock Interlacing inequalities for eigenvalues of discrete laplace
  operators.
\newblock {\em Annals of Global Analysis and Geometry}, 43:177--207, 2011.

\bibitem{Hansen_2019}
Jakob Hansen and Robert Ghrist.
\newblock Toward a spectral theory of cellular sheaves.
\newblock {\em Journal of Applied and Computational Topology}, 3(4):315--358,
  aug 2019.

\bibitem{ghrist2020cellular}
Robert Ghrist and Hans Riess.
\newblock Cellular sheaves of lattices and the tarski laplacian.
\newblock {\em Homology, Homotopy and Applications}, 24(1):325--345, 2022.

\bibitem{mukherjee2016random}
Sayan Mukherjee and John Steenbergen.
\newblock Random walks on simplicial complexes and harmonics.
\newblock {\em Random structures \& algorithms}, 49(2):379--405, 2016.

\bibitem{Schaub2020}
Michael~T. Schaub, Austin~R. Benson, Paul Horn, Gabor Lippner, and Ali
  Jadbabaie.
\newblock Random walks on simplicial complexes and the normalized hodge
  1-laplacian.
\newblock {\em {SIAM} Review}, 62(2):353--391, jan 2020.

\bibitem{kaufman2020high}
Tali Kaufman and Izhar Oppenheim.
\newblock High order random walks: Beyond spectral gap.
\newblock {\em Combinatorica}, 40:245--281, 2020.

\bibitem{Ebli2020}
Stefania Ebli, Michaël Defferrard, and Gard Spreemann.
\newblock Simplicial neural networks.
\newblock 2020.

\bibitem{Barbarossa_2020}
Sergio Barbarossa and Stefania Sardellitti.
\newblock Topological signal processing over simplicial complexes.
\newblock {\em {IEEE} Transactions on Signal Processing}, 68:2992--3007, 2020.

\bibitem{Bodnar2021}
Cristian Bodnar, Fabrizio Frasca, Nina Otter, Yu~Guang Wang, Pietro Liò, Guido
  Montúfar, and Michael Bronstein.
\newblock Weisfeiler and lehman go cellular: Cw networks.
\newblock {\em NeurIPS}, 2021.

\bibitem{roddenberry2021principled}
T~Mitchell Roddenberry, Nicholas Glaze, and Santiago Segarra.
\newblock Principled simplicial neural networks for trajectory prediction.
\newblock {\em International Conference on Machine Learning}, pages 9020--9029,
  2021.

\bibitem{giusti2022simplicial}
Lorenzo Giusti, Claudio Battiloro, Paolo Di~Lorenzo, Stefania Sardellitti, and
  Sergio Barbarossa.
\newblock Simplicial attention neural networks.
\newblock {\em arXiv preprint arXiv:2203.07485}, 2022.

\bibitem{keros2022dist2cycle}
Alexandros~D Keros, Vidit Nanda, and Kartic Subr.
\newblock Dist2cycle: A simplicial neural network for homology localization.
\newblock {\em Proceedings of the AAAI Conference on Artificial Intelligence},
  36(7):7133--7142, 2022.

\bibitem{roddenberry2022signal}
T~Mitchell Roddenberry, Michael~T Schaub, and Mustafa Hajij.
\newblock Signal processing on cell complexes.
\newblock {\em ICASSP 2022-2022 IEEE International Conference on Acoustics,
  Speech and Signal Processing (ICASSP)}, pages 8852--8856, 2022.

\bibitem{bodnar2022neural}
Cristian Bodnar, Francesco Di~Giovanni, Benjamin Chamberlain, Pietro Li{\`o},
  and Michael Bronstein.
\newblock Neural sheaf diffusion: A topological perspective on heterophily and
  oversmoothing in gnns.
\newblock {\em Advances in Neural Information Processing Systems},
  35:18527--18541, 2022.

\bibitem{Farmer78}
F.D.Farmer.
\newblock Cellular homology for posets.
\newblock {\em Math.Japon.}, 23:607--613, 1978/79.

\bibitem{BjornerWachs1983}
Anders Bj\"{o}rner and Michelle Wachs.
\newblock On lexicographically shellable posets.
\newblock {\em Trans. Amer. Math. Soc.}, 277(1):323--341, 1983.

\end{thebibliography}

\end{document}